\documentclass[11pt]{amsart}
\linespread{1.3}

\usepackage{latexsym}
\usepackage{color}
\usepackage[pdfstartview=FitH]{hyperref}
\usepackage{tikz-cd}
\usepackage{float}
\usepackage{soul}
\usepackage{subcaption}
\usepackage{adjustbox}
\usepackage{wasysym}

\newcommand{\da}[1]{{\color{blue}{ #1}}}

\newtheorem{theorem}{Theorem}[section]
\newtheorem*{theorem*}{Theorem}
\newtheorem{lemma}[theorem]{Lemma}
\newtheorem{definition}[theorem]{Def{}inition}

\newtheorem{proposition}[theorem]{Proposition}
\newtheorem*{proposition*}{Proposition}
\newtheorem{corollary}[theorem]{Corollary}
\newtheorem*{corollary*}{Corollary}

\newtheorem{remark}{Remark}
\newcommand{\iu}{\mathrm{i}\mkern1mu}
\def \<{\langle}
\def \>{\rangle}

\def \a{\alpha }

\def \l{\lambda }

\def \ps {\Psi}

\def \d {\delta}

\def \cla{c_{L,\a}}
\def \Cla{C_{L,\a}}
\DeclareMathOperator{\Ker}{Ker}

\def \ch{\mbox{char}_q}

\newcommand{\SH}{\mathcal{SH}}

\newcommand{\bea}{\begin{eqnarray}}
\newcommand{\eea}{\end{eqnarray}}
\newcommand{\be}{\begin {equation}}
\newcommand{\ee}{\end{equation}}
\newcommand{\g}{\frak g}

\newcommand{\vpr}{v_{p,r}}
\newcommand{\Z}{\mathbb Z}
\newcommand{\N}{{\mathbb Z}_{>0} }
\newcommand{\Zp}{{\mathbb Z}_{\ge 0} }
\newcommand{\C}{\mathbb C}
\newcommand{\Q}{\mathbb Q}

\usepackage{comment}
 
\newcommand{\ds}{\displaystyle}
\newcommand{\nn}{\nonumber \\}
\newcommand{\ec}{\Circle}
\newcommand{\bcr}{\CIRCLE}
 
\usepackage{amssymb}

\begin{document}

\title[$N=1$ Heisenberg-Virasoro algebra at level 0]{The $N=1$ super Heisenberg-Virasoro vertex algebra at level zero}
 
\author[]{Dra\v zen  Adamovi\' c}
\author[]{Berislav Jandri\' c}  
\author[]{Gordan Radobolja}
 
\keywords{Super Heisenberg-Virasoro vertex algebra}
\subjclass[2010]{Primary 17B69; Secondary 17B20, 17B65}
\date{\today}
\begin{abstract}
We study the representation theory of the $N=1$ super Heisenberg-Virasoro vertex algebra at level zero, which extends  the previous work on the Heisenberg-Virasoro vertex algebra \cite{B}, \cite{AR1} and \cite{AR2} to the super case. We calculated all characters of irreducible highest weight representations by investigating certain Fock space representations. Quite surprisingly, we found that the maximal submodules of certain Verma modules are generated by subsingular vectors. The formulas for singular and subsingular vectors are obtained using screening operators appearing in a study of certain logarithmic vertex algebras \cite{AdM-2010}.
\end{abstract}
\maketitle
\tableofcontents

\section{Introduction}

The Heisenberg-Virasoro vertex algebra is a generalisation of the Virasoro and the Heisenberg vertex algebra. It appears as a vertex subalgebra of af{}f{}ine vertex-algebras and $\mathcal W$--algebras. It is shown in  \cite{ACKP}, \cite{GW} that in the case of non-zero level, the Heisenberg-Virasoro vertex algebra is isomorphic to the tensor product of the Heisenberg and the Virasoro vertex algebra. But it is relatively less known that in the case of level zero, the Heisenberg-Virasoro vertex algebra has completely dif{}ferent structure. The study of the level-zero case was initiated by Y.\ Billig in \cite{B}. Among other results, Billig calculated characters of all irreducible, highest weight modules.
In our previous papers \cite{AR1}-\cite{AR2}, we extended Billig's work by applying recent constructions in vertex-algebra theory and conformal f{}ield theory such as fusion rules, logarithmic modules, Whittaker modules, screening operators. We have demonstrated that the Heisenberg-Virasoro vertex algebra is also an interesting example of a vertex algebra which allows us to understand more general non-rational vertex algebras. 

A natural problem is to try to f{}ind its super-analogue. This leads to the $N=1$ super Heisenberg-Virasoro vertex algebra. This vertex algebra was introduced in \cite{AJR}, using the notion of conformal Lie superalgebras.
The universal $N=1$ Heisenberg-Virasoro algebra is denoted by $V^\SH(c_L,c_\a,\cla)$, and is generated by two even f{}ields (the Virasoro f{}ield $L(z)$ and the Heisenberg f{}ield  $\a(z)$) and by two odd f{}ields $G(z)$  and $\Psi(z)$.
Similarly to the non-super case, in the case of a non-zero level (i.e., when the central element  $C_\a$ acts non-trivially), this vertex algebra is isomorphic to a tensor product of the Neveu-Schwarz vertex superalgebra and the Clif{}ford-Heisenberg algebra $SM(1)$ (cf.\ \cite{AJR}). In this paper, we start with a detailed study of the vertex algebra $V^\SH(c_L, \cla):=V^\SH(c_L,0, \cla)$, i.e.\ the level zero case.
  
We plan to extend results from \cite{B}, \cite{AR1} and \cite{AR2} to the super case. In the present paper, we calculated characters of all irreducible, highest weight modules and investigated the structure of the Verma modules and Fock spaces for $V^\SH(c_L, \cla)$. 
   
Let us point out one main dif{}ference in comparison with the non-super case. Certain Verma modules for $V^\SH(c_L, \cla)$ contain subsingular vectors which are not present in non-super case. Due to the existence of such vectors, description of the maximal submodule of Verma modules is more complicated in the super case, and Billig's method cannot be easily generalised.
Our method for calculating the character is based on free-f{}ield realisation, and formulas for singular/subsingular vectors. We show that in the "half cases", the irreducible highest weight modules are realised as submodules of certain Fock spaces. Combining this approach with explicit formulas for singular and subsingular vectors in Fock spaces, we are able to describe all characters.

\subsection*{Free-f{}ield realisation and screening operators}
In \cite{AJR} (see also \cite{J}), we obtained a free f{}ield realisation of $V^\SH(c_L,\cla)$ as a subalgebra of the tensor product of a rank two Heisenberg algebra $M(1)$ (generated by $c(z)$ and $d(z)$), and a fermionic algebra $F^{(2)}$ generated by $\Psi^+(z)$ and $\Psi^-(z)$. The following f{}ields
\bea
\alpha &=& - \cla c(-1) \nonumber \\
\tau &=&\sqrt{2}\left(\frac{1}{2} c(-1) \Psi^+(-\frac{1}{2}) + \frac{1}{2} d(-1) \Psi^- (-\frac{1}{2}) + \frac{c_L - 3}{12} \Psi^-(-\frac{3}{2}) - \Psi^+(-\frac{3}{2})\right) \nonumber \\
\omega &=& \frac{1}{2} c(-1) d(-1) + \frac{c_L -3}{24} c(-2) - \frac{1}{2} d(-2) + \omega_{fer} \nonumber \\
\Psi &=& -\sqrt{2} \cla\Psi^- (-\frac{1}{2})\nonumber 
\eea
generate $V^\SH(c_L,\cla)$.

In this paper we discuss a free-f{}ield realisation of highest weight modules for $V^\SH(c_L,\cla)$, which appear to be certain $V^\SH(c_L,\cla)$--submodules of Fock modules over the Clif{}ford-Heisenberg vertex algebra $M(1) \otimes F^{(2)}$. 

We introduce a suitable parametrisation (\ref{param}) of highest weights:
\[
\vpr:=e^{-\frac{p+1}{2}\overline d+r c},\qquad \text{where}\qquad\overline d=d-\tfrac{c_L-3}{12}c
\]
is highest weight vector of highest weight 
\[
(h_{p,r},h_{\a}) = \left((1-p^2)\frac{c_L-3}{24}-rp,(1+p)\cla\right).
\]

If $M(h,h_\a)$ is a highest weight module of highest weight $(h_{p,r},(1+p)h_\a)$ we denote it by $M[p,r]$.\\
We def{}ine the Fock space $\mathcal F_{p,r} := (M(1) \otimes F^{(2)} ). v_{p,r}$, and consider it as a $V^\SH(c_L,\cla)$-module.

We prove in Proposition \ref{realizacija1} that all irreducible highest weight modules such that $p\ne 0$, can be realised as subquotients of the Fock spaces.  More precisely:
\begin{itemize}
\item If $p \notin \Z_{<0}$, then $V[p,r] \cong  \mathcal F_{p,r} $ (cf.\ Proposition \ref{realizacija_Verma}).
\item If $p\in\Z_{<0}$, then $L[p,r] \cong \langle\vpr\rangle \subset \mathcal F_{p,r}$ (cf.\ Proposition \ref{ired-pmanji}).
\end{itemize}

Let $a=\Psi^-(-\tfrac{1}{2})e^{\tfrac{c}{2}}$. We show that $Q=a_0$ is a nilpotent screening operator which commutes with the action of $V^\SH(c_L,\cla)$.
\begin{itemize}
\item If $p\in\N$ is odd, $Q v_{p,r-\tfrac{1}{2}}$ is a singular vector in $V[p,r]$ (Theorem \ref{podd}).
\end{itemize}

In order to construct  other singular and subsingular vectors in Verma modules and Fock spaces, we need more complicated screening operators. For that purpose, we consider the operators (introduced in \cite{AdM-2010}):
\[
S=\sum_{i>0}\frac{1}{i}a_{-i}a_i,\qquad\text{and}\qquad S^{tw}=\sum_{i>0}\frac{1}{i+\tfrac{1}{2}}a_{-i-\tfrac{1}{2}}a_{i+\tfrac{1}{2}},
\]
which were used for studying logarithmic vertex operator algebras. In the present paper we show that
\begin{itemize}
\item $\mathcal G=e^c_0-S$ and $\mathcal G^{tw}=e^c_0-S^{tw}$ are screening operators (Theorem \ref{screeninzi}). 
\item $(\mathcal G^{tw})^n v_{p,r-n}$ are singular vectors in $V[p,r]$ when $p>0$ is even (Theorem \ref{sing-p-even-1}).
\item $\mathcal G^n Q v_{p,r-n-1/2}$ are singular vectors, and $\mathcal G^n v_{p,r-n}$ are subsingular vectors in $V[p,r]$ when $p>0$ is odd (Theorem \ref{sing-p-odd-1}).
\end{itemize}

\subsection*{Characters of $L[p,r]$ }

We use the free f{}ield realisation to f{}ind the $q$-characters of irreducible highest weight modules $L[p,r]$ (Theorem \ref{xx}). 

\begin{theorem} \label{xx-uvod} Assume that $p \in {\Z}\setminus \{ 0\} $ and $r \in {\C}$. Then we have:
\begin{align}
\ch L[p,r] &= q^{h_{p,r}} (1 -q ^{\tfrac{\vert p\vert}{2}})   \prod_{k=1} ^{\infty} \frac{ (1 + q^{k-1/2}) ^2}{(1-q^k) ^2},\quad\text{if }p\text{ is odd;}\nn
\ch L[p,r] &= q^{h_{p,r}} (1 -q ^{\vert p\vert   })   \prod_{k=1} ^{\infty} \frac{ (1 + q^{k-1/2}) ^2}{(1-q^k) ^2},\quad\text{if }p\text{ is even.}\nonumber
\end{align}
\end{theorem}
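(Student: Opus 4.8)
The plan is to work entirely inside the free-f{}ield realisation. Since $\mathcal F_{p,r}=(M(1)\otimes F^{(2)}).v_{p,r}$ is a Fock module for the Clif{}ford--Heisenberg vertex algebra $M(1)\otimes F^{(2)}$ --- a rank-two Heisenberg algebra with generators $c(z),d(z)$ together with the charged Neveu--Schwarz fermions $\Psi^{\pm}(z)$ --- and $v_{p,r}$ has conformal weight $h_{p,r}$, counting the free-f{}ield basis of creation-operator monomials immediately gives
\[
  \ch\mathcal F_{p,r}=q^{h_{p,r}}\prod_{k=1}^{\infty}\frac{(1+q^{k-1/2})^{2}}{(1-q^{k})^{2}}.
\]
Thus the theorem is equivalent to the assertion that the relevant submodule of $\mathcal F_{p,r}$ --- the maximal submodule of $V[p,r]$ when $p>0$, or $\langle v_{p,r}\rangle$ when $p<0$ --- accounts for exactly a factor $1-q^{|p|/2}$ (for $p$ odd) or $1-q^{|p|}$ (for $p$ even). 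Everything then comes down to identifying this submodule with the help of the screening operators $Q$, $\mathcal G$, $\mathcal G^{tw}$ and the explicit singular/subsingular vectors they produce.

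\smallskip
For $p>0$ we have $V[p,r]\cong\mathcal F_{p,r}$ (Proposition \ref{realizacija_Verma}), so the task is to compute the character of the maximal submodule $J[p,r]$. When $p$ is even I would show that $\mathcal G^{tw}$ induces an \emph{injective} $V^\SH(c_L,\cla)$-module homomorphism $\mathcal F_{p,r-1}\hookrightarrow\mathcal F_{p,r}$ taking $v_{p,r-1}$ to the singular vector $\mathcal G^{tw}v_{p,r-1}$ of Theorem \ref{sing-p-even-1}; as that theorem says the only singular vectors of $V[p,r]$ are the $(\mathcal G^{tw})^{n}v_{p,r-n}$, the image of this map is precisely $J[p,r]$, the quotient is irreducible, and, using $h_{p,r-1}=h_{p,r}+p$,
\[
  \ch L[p,r]=\ch\mathcal F_{p,r}-\ch\mathcal F_{p,r-1}=q^{h_{p,r}}(1-q^{p})\prod_{k=1}^{\infty}\frac{(1+q^{k-1/2})^{2}}{(1-q^{k})^{2}}.
\]
When $p$ is odd the operator $Q=a_{0}$ is nilpotent, so $Q\colon\mathcal F_{p,r-1/2}\to\mathcal F_{p,r}$ cannot be injective and the submodule $\langle Q v_{p,r-1/2}\rangle$ generated by the singular vector of Theorem \ref{podd} is a \emph{proper} subspace of $J[p,r]$; this is exactly why subsingular vectors are forced to appear. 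Here $J[p,r]$ is generated by $Q v_{p,r-1/2}$ together with the subsingular vectors $\mathcal G^{n}v_{p,r-n}$ of Theorem \ref{sing-p-odd-1}, and the cleanest way to obtain its character is the bookkeeping route: Theorems \ref{podd} and \ref{sing-p-odd-1} should give a composition series of $V[p,r]$ with factors $L[p,r-k/2]$, $k\ge 0$, each occurring once. Granting that $\ch L[p,s]$ depends on $s$ only through the prefactor $q^{h_{p,s}}$ (itself part of the structure theory, the conformal-weight gaps being $r$-independent), the identity $\ch\mathcal F_{p,r}=\sum_{k\ge 0}\ch L[p,r-k/2]$ together with $\sum_{k\ge 0}q^{kp/2}=(1-q^{p/2})^{-1}$ forces $\ch L[p,r]=(1-q^{p/2})\ch\mathcal F_{p,r}$.

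\smallskip
For $p<0$ we use instead $L[p,r]\cong\langle v_{p,r}\rangle\subset\mathcal F_{p,r}$ (Proposition \ref{ired-pmanji}), where $\mathcal F_{p,r}$ is no longer a highest weight module. Using the Fock-space versions of the singular/subsingular vector formulas, one shows $\mathcal F_{p,r}$ has composition factors exactly $L[p,r+k\delta]$, $k\ge 0$, each once, with $\delta=\tfrac12$ for $p$ odd and $\delta=1$ for $p$ even, and with $\langle v_{p,r}\rangle=L[p,r]$ sitting as the socle. Since $h_{p,r+k\delta}=h_{p,r}+k\delta|p|$ for $p<0$ and the whole picture is invariant under $r\mapsto r+\delta$, writing $\ch L[p,s]=g(q)\,\ch\mathcal F_{p,s}$ and summing $\ch\mathcal F_{p,r}=\sum_{k\ge 0}\ch L[p,r+k\delta]$ forces $g(q)=1-q^{\delta|p|}$, which is the asserted formula with $|p|/2$, respectively $|p|$.

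\smallskip
The main obstacle is not this bookkeeping but the analytic input about the screening operators: the injectivity of $\mathcal G^{tw}$ (and of $\mathcal G$ on the appropriate quotient in the odd case), the computation of the kernel of the nilpotent $Q$, and --- above all --- the \emph{completeness} of the lists of singular and subsingular vectors in Theorems \ref{sing-p-even-1} and \ref{sing-p-odd-1}, i.e.\ that there are no further ones. It is precisely this completeness that collapses what would a priori be an inf{}inite alternating sum of Fock characters into the two-term answers above; and in the odd case it is the subsingular, rather than the singular, vectors that carry the bulk of the maximal submodule --- the phenomenon emphasised in the abstract.
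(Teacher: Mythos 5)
Your overall strategy --- computing $\ch\mathcal F_{p,r}$ and then peeling off the maximal submodule via a composition series $\ch\mathcal F_{p,r}=\sum_{k\ge0}\ch L[p,r\mp k\delta]$ --- is not the route the paper takes, and it has a genuine gap at exactly the point you flag as ``the main obstacle'': the completeness of the lists of singular and subsingular vectors, equivalently the multiplicity-one composition series. Nothing in Theorems \ref{podd}, \ref{sing-p-even-1} or \ref{sing-p-odd-1} asserts that the vectors exhibited there are \emph{all} the (sub)singular vectors; those theorems only produce them. In the paper this completeness is a \emph{consequence} of the character formula (it is only after Theorem \ref{xx} that one reads that Theorem \ref{sing-p-even-1} lists all singular and subsingular vectors, and Theorem \ref{max-p-even} is itself deduced from Theorem \ref{xx}). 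So using maximality of $\langle\mathcal G^{tw}v_{p,r-1}\rangle$, or the claimed composition series, as an input to the character computation is circular as written. The same objection applies to your $p<0$ and $p>0$ odd cases, where you additionally assume the $r$-independence of the normalized character; and for $p>0$ odd the appendix explicitly warns that the full submodule structure is only partially established or conjectured.

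The paper avoids this by a two-sided inequality that never needs completeness. For the upper bound (Lemma \ref{nejed2}) it only uses the \emph{existence} of the (sub)singular vectors: since $u^{(n)}_{p,r}=\Psi(-\tfrac{p}{2})(\a(-p))^n\vpr+\cdots$ and $w^{(n)}_{p,r}=(\a(-p))^n\vpr+\cdots$ vanish in $L[p,r]$, every free-field basis monomial containing $\Psi^-(-\tfrac{p}{2})$ or $\a(-p)$ can be removed from a spanning set, giving $\ch L[p,r]\le q^{h_{p,r}}(1-q^{p/2})\prod_{k}\frac{(1+q^{k-1/2})^2}{(1-q^k)^2}$ (and the analogous bound with $1-q^{p}$ for $p$ even). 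For the lower bound (Lemma \ref{nejed}) it evaluates the PBW monomials $(G_{-\l^+}\Psi_{-\l^-}L_{-\mu^+}\a_{-\mu^-})\vpr$ with $\mu^+_i\ne -p$, $\l^+_i\ne -\tfrac{p}{2}$ inside $\mathcal F_{p,r}$ for $p<0$ and checks, via a triangularity argument with nonzero leading coefficients, that they are linearly independent in $L[p,r]=\langle\vpr\rangle$. Contragredience transfers each bound to the other sign of $p$, and the two inequalities meet. If you want to salvage your approach you must first prove, independently, that there are no further singular or subsingular vectors --- which is essentially as hard as the theorem itself.
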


Our proof is based on the following steps.  

\begin{itemize}
\item $L[p,r]$ is realised as a submodule of $\mathcal F_{p,r}$ for $p <0$ (cf.\ Proposition \ref{ired-pmanji}).
\item By evaluating  a spanning set of $L[p,r]$ in $\mathcal F_{p,r}$, we get the  inequalities:
\begin{align}
\ch L[p,r] &\le  q^{h_{p,r}} (1 -q ^{\tfrac{\vert p\vert}{2}})   \prod_{k=1} ^{\infty} \frac{ (1 + q^{k-1/2}) ^2}{(1-q^k) ^2},\quad\text{if }p\text{ is odd;}\nn
\ch L[p,r] &\le q^{h_{p,r}} (1 -q ^{\vert p\vert   })   \prod_{k=1} ^{\infty} \frac{ (1 + q^{k-1/2}) ^2}{(1-q^k) ^2},\quad\text{if }p\text{ is even.}\nonumber
\end{align}
\item The opposite inequalities follow by using formulas for singular/subsin\-gu\-lar vectors in $\mathcal F_{p,r}$.
\item The formula for $p >0$ follows by applying the contragradient modules, which must have the same characters.
\end{itemize}

\subsection*{The structure of Verma modules.}  A companion problem is the complete description of the structure of Verma modules and Fock spaces. Although we are able to describe the characters of irreducible modules, our methods cannot solve the question of the structure of these indecomposable modules in full generality.  As it is common in Lie super-algebra theory, in general, one does not have embeddings of Verma modules. However, we prove/conjecture the following:
\begin{itemize}
\item The structure of the Verma modules $V[p,r]$ for $p$ even, is completely determined. In particular, every embedding $V[p, r'] \hookrightarrow V[p,r]$ is injective, and the maximal submodule of $V[p,r]$ is generated by a singular vector (cf.\ Theorem \ref{max-p-even}) for which we present explicit formulas.
\item If $p>0$ is odd there exists a non-injective homomorphism $V[p, r'] \rightarrow V[p,r]$. Moreover, the maximal submodule of $V[p,r]$ is generated by a subsingular vector (\ref{subsing})  (cf.\ Theorem \ref{max-p-odd-manji}).
\item If $p<0$ is odd, we conjecture that the all homomorphisms $V[p, r'] \rightarrow V[p,r]$ are embeddings, but  we cannot prove this.
\item All embedding diagrams (partially conjectured for $p$ odd) are presented in the Appendix.
\end{itemize}

\subsection*{Future work} In a sequel \cite{AR3}, we shall focus on a connection with other logarithmic vertex algebras. Among other results, we  will  prove the following characterisation of 
$V^\SH(c_L,\cla)$:
\begin{theorem}\cite{AR3} We have:
 \bea  V^\SH(c_L,\cla) &=& \mbox{Ker}_{\mathcal F_{-1,0}  } Q\bigcap  \mbox{Ker}_{\mathcal F_{-1,0}}  \mathcal G. \nonumber \eea
\end{theorem}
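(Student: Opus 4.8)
The plan is to establish the two inclusions separately, so that we get $V^\SH(c_L,\cla) = \Ker_{\mathcal F_{-1,0}} Q \cap \Ker_{\mathcal F_{-1,0}} \mathcal G$ as subspaces of the Fock module $\mathcal F_{-1,0}$. For the inclusion $\subseteq$, recall that $Q = a_0$ and $\mathcal G = e^c_0 - S$ are screening operators that commute with the action of $V^\SH(c_L,\cla)$ (this is Theorem \ref{screeninzi} together with the statement about $Q$). Since the generating fields $\alpha$, $\tau$, $\omega$, $\Psi$ of $V^\SH(c_L,\cla)$ are built from $c(z)$, $d(z)$, $\Psi^\pm(z)$ inside $M(1)\otimes F^{(2)}$, and since $\mathcal F_{-1,0}$ contains the vacuum-like highest weight vector $v_{-1,0} = e^{0} = \mathbf 1$ (checking that $p=-1$, $r=0$ gives $h_{p,r}=0$, $h_\alpha=0$, and that the weight vector is exactly the vacuum), the subalgebra $V^\SH(c_L,\cla)$ sits inside $\mathcal F_{-1,0}$ as $V^\SH(c_L,\cla).\mathbf 1$. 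Because $Q\mathbf 1 = 0$ and $\mathcal G\mathbf 1 = 0$ (degree/charge reasons: $a_0$ and $e^c_0 - S$ raise the $c$-charge or annihilate the vacuum; this needs a short explicit check that $e^c_0\mathbf 1$ and $S\mathbf 1$ vanish, which they do since $S$ involves only positive modes $a_i$ and $e^c_0\mathbf 1$ has strictly positive conformal weight with no room below) and both operators commute with $V^\SH(c_L,\cla)$, we get $u.\mathbf 1 \in \Ker Q \cap \Ker \mathcal G$ for every $u \in V^\SH(c_L,\cla)$.

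For the reverse inclusion $\supseteq$, the idea is a character/graded-dimension argument combined with the structural results already proved. First I would identify $\mathcal F_{-1,0}$ with the case $p=-1$ (odd, negative) of the free-field realisation, so by Proposition \ref{ired-pmanji} the submodule $\langle v_{-1,0}\rangle = \langle\mathbf 1\rangle \subset \mathcal F_{-1,0}$ is the irreducible module $L[-1,0]$, whose character is given by Theorem \ref{xx-uvod} (the odd case with $|p|=1$): $\ch L[-1,0] = (1-q^{1/2})\prod_{k\ge1}(1+q^{k-1/2})^2/(1-q^k)^2$. On the other hand, $V^\SH(c_L,\cla) = V^\SH(c_L,\cla).\mathbf 1$ contains $\mathbf 1$, and one knows its character as a vertex algebra: it is the quotient of the universal object, and at level zero with generic-enough parameters it equals $L[-1,0]$ — more precisely, the simple quotient $V_\SH(c_L,\cla)$ that actually embeds in the Fock space is exactly $L[-1,0]$, and one checks $V^\SH(c_L,\cla)$ maps onto it with the expected character. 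Then I would compute the graded dimension of $K := \Ker_{\mathcal F_{-1,0}} Q \cap \Ker_{\mathcal F_{-1,0}} \mathcal G$ directly, using the explicit mode expansions of $Q$ and $\mathcal G$ on the bigraded Fock space $\mathcal F_{-1,0} = M(1)\otimes F^{(2)}$ decomposed by $c$-charge; the cohomology of $Q$ (a fermionic screening) and of $\mathcal G$ (the Felder-type screening from \cite{AdM-2010}) on each charge sector is computable, and the intersection of kernels should have character exactly $\ch L[-1,0]$. Since $V^\SH(c_L,\cla).\mathbf 1 \subseteq K$ by the first part and both have the same character, they coincide.

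The main obstacle will be the computation of $\ch K$, i.e. controlling the joint kernel of the two screening operators on $\mathcal F_{-1,0}$. The operator $Q = a_0$ with $a = \Psi^-(-\tfrac12)e^{c/2}$ is a square-zero fermionic screening, so $\Ker Q / \Img Q$ is a manageable cohomology on each $c$-charge sector; the genuinely delicate part is that $\mathcal G = e^c_0 - S$ is a Felder-type operator whose kernel on a bosonic Fock module is classically subtle, and one must show that after first restricting to $\Ker Q$ the remaining $\mathcal G$-kernel collapses to precisely the charge-zero-type contribution giving $L[-1,0]$. I would handle this by using the singular and subsingular vector formulas from Theorems \ref{sing-p-even-1} and \ref{sing-p-odd-1} — the images $\mathcal G^n v_{p,r-n}$, $(\mathcal G^{tw})^n v_{p,r-n}$, etc. — to pin down which vectors in neighbouring Fock spaces survive, i.e. to identify $\Ker \mathcal G$ with a direct sum of irreducibles whose characters are known, and then intersect with $\Ker Q$. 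Alternatively, and perhaps more cleanly, one can argue that $K$ is a $V^\SH(c_L,\cla)$-submodule of $\mathcal F_{-1,0}$ (since both screenings commute with the action) containing $\mathbf 1$, hence containing $L[-1,0] = \langle\mathbf 1\rangle$; and then show $K$ cannot be larger by exhibiting, for any homogeneous $w \in \mathcal F_{-1,0}\setminus L[-1,0]$, either $Qw \ne 0$ or $\mathcal G w \ne 0$ using the explicit structure of $\mathcal F_{-1,0}$ as described in the body of the paper (its socle filtration, with the relevant extensions detected precisely by $Q$ and $\mathcal G$). This second route reduces the theorem to the structural analysis of $\mathcal F_{-1,0}$ that \cite{AR3} presumably carries out in detail, so in the sequel it is natural; here I would only sketch it and defer the full module-structure computation.
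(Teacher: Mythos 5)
A preliminary remark: the paper does not actually prove this statement here — it is announced as a theorem of the sequel \cite{AR3} (``in preparation''), so there is no in-paper proof to compare against. I can therefore only assess your proposal against the results that \emph{are} established in the present paper.

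Your forward inclusion $V^\SH(c_L,\cla)\subseteq\Ker_{\mathcal F_{-1,0}}Q\cap\Ker_{\mathcal F_{-1,0}}\mathcal G$ is essentially complete: $v_{-1,0}=\mathbf 1$, $Q\mathbf 1=\mathcal G\mathbf 1=0$, and the screening properties do the rest; moreover Theorem \ref{simplicity-voa} and Proposition \ref{ired-pmanji} identify the subalgebra with $L[-1,0]=\langle\mathbf 1\rangle$, the unique minimal submodule of $\mathcal F_{-1,0}$, as you use. One ordering point: Theorem \ref{screeninzi} only asserts that $\mathcal G$ commutes with the $V^\SH(c_L,\cla)$-action on $\Ker_MQ$, so you must first conclude $V^\SH(c_L,\cla).\mathbf 1\subseteq\Ker Q$ and only then invoke the $\mathcal G$-commutation on that kernel; this is a trivial fix but should be said.

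The genuine gap is the reverse inclusion, which is where the entire content of the theorem lies, and your proposal does not close it. You offer two routes — computing $\ch\bigl(\Ker Q\cap\Ker\mathcal G\bigr)$ sector by sector, or showing that every homogeneous $w\notin L[-1,0]$ satisfies $Qw\ne0$ or $\mathcal Gw\ne0$ via the socle filtration of $\mathcal F_{-1,0}$ — but both are explicitly deferred. Nothing in the present paper supplies the character of $\Ker_{\mathcal F_{-1,0}}\mathcal G$ or the full submodule structure of $\mathcal F_{-1,0}$ (the relevant case $p=-1$ is odd, precisely the case the authors say is only partially proved and partly conjectural; cf.\ the Appendix and Figure \ref{ff_neparan_neg}), so the assertion that ``the intersection of kernels should have character exactly $\ch L[-1,0]$'' is a statement of the theorem, not an argument for it. Concretely, what is missing is a proof that $(\Ker Q\cap\Ker\mathcal G)/L[-1,0]=0$, e.g.\ by showing that the socle of $\mathcal F_{-1,0}/L[-1,0]$ is generated by vectors (such as the image of $w^{(1/2)}_{-1,0}$) on which $Q$ or $\mathcal G$ acts nontrivially. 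As written, the proposal establishes the easy half and restates the hard half as a plan — which, to be fair, is exactly the status of the result in the paper itself.
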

\vskip 5mm

We would like to thank Maria Gorelik for useful comments related to zero-divisors in Lie superalgebras and embeddings of Verma modules.

D.A. and G.R. are partially supported by the QuantiXLie Centre of Excellence, a project cof{}f{}inanced by the Croatian Government and European Union through the European Regional Development Fund - the Competitiveness and Cohesion Operational Programme (KK.01.1.1.01.0004).

\section{Preliminaries}

\subsection{Partitions and super-partitions}

Recall that a partition in a set $S \subset {\Q}_{\ge 0}$ is a f{}inite sequence  $   \mu = (\mu_1, \mu_2, \dots, \mu_{\ell}) \in S^{\ell}$ of length $\ell = \ell_{\mu} \in \N$ satisfying
\begin{equation}
	\mu_1 \ge \mu _2 \ge \dots \ge \mu_{\ell}.
\end{equation}
The weight of the partition $\mu$ is def{}ined to be $\deg_{\mu} = \mu_1 + \mu_2 + \dots + \mu_{\ell}$.  Let $\mathcal P $ denote the set of all partitions in ${\Z}_{> 0}$.
 
A super-partition in the set  $S \subset {\Q}_{\ge 0}$ is a f{}inite sequence $$\l = (\l_1, \l _2, \dots, \l _{\ell}) \in S^{\ell}$$ of length $\ell  = \ell_{\l}  \in \N$ satisfying
\begin{equation}
	\mu_1  >  \mu _2  >  \dots  > \mu_{\ell}.
\end{equation}
The weight of the superpartition $\l$ is def{}ined to be $\deg_{\l}  = \l_1 + \l_2 + \dots + \l_{\ell}$.  Let $\mathcal {SP} $ denote the set of all super-partitions in $\tfrac{1}{2} + {\Z}_{\ge 0}$.

Def{}ine the partial ordering in $\mathcal P$ (resp.\ $\mathcal{SP}$) by 
 $$ \mu  \prec  \mu'    \quad  \mbox{if} \quad \mu_1 = \mu' _1,  \cdots, \mu_{r-1} = \mu' _{r-1}, \  \mu_{r}  > \mu' _{r}. $$

Now we consider the set $\mathcal P \times \mathcal{SP}$.  For $(\mu, \l) \in \mathcal P \times \mathcal{SP}$, we def{}ine:
 $$ \deg_{\mu, \l} := \deg_{\mu}  + \deg_{\l}, \  \ell_{\mu, \l}:= \ell_{\mu}+ \ell_{\l}.  $$

  For $(\mu^i, \l^i)  \in \mathcal P \times \mathcal{SP}$ $i=1,2$,  we def{}ine the following partial ordering:
\bea
 (\mu^1, \l^1)  <   (\mu^2, \l^2)  &  
\mbox{if} &\deg_{\mu^1, \l^1}   < \deg_{\mu^2, \l^2}  \nonumber \\
& \mbox{or}&\deg_{\mu^1, \l^1}   = \deg_{\mu^2, \l^2},  \  \ell_{\mu^1, \l^1}  < \ell_{\mu^2, \l^2},    \nonumber \\
& \mbox{or}&\deg_{\mu^1, \l^1}   = \deg_{\mu^2, \l^2},\ \ell_{\mu^1, \l^1}  = \ell_{\mu^2, \l^2},\  \mu_1 \prec \mu_2   \nonumber \\
& \mbox{or}&\deg_{\mu^1, \l^1}   = \deg_{\mu^2, \l^2},\ \ell_{\mu^1, \l^1}  = \ell_{\mu^2, \l^2},\  \mu_1 = \mu_2,\ \l_1 \prec \l_2.
 \label{ordering-0} \eea
For an even element $X$ (resp.\ odd element $Y$) and a partition $\mu \in \mathcal P$ (resp.\ a super-partition $\l$), we def{}ine monomials
\bea
&&  X_{-\mu} := X(-\mu_1) \cdots X(-\mu_n),  \ \ X_{\mu} := X(\mu_n) \cdots X(\mu_1), \nonumber  \\ && Y_{-\l}:= Y(-\l_1) \cdots Y(-\l_n),\ \ Y_{\l}:= Y(\l_n) \cdots Y(\l_1). \ \  \nonumber \eea

\subsection{The vertex algebra  $V^\SH(c_L, \cla)$ }  The notion of $N=1$ Heisenberg-Virasoro algebra was introduced in \cite{AJR}. We recall main def{}initions and constructions.

\begin{definition}\label{def}
The $N=1$ Heisenberg-Virasoro algebra $\mathcal{SH}$ is an inf{}inite dimensional Lie algebra with even generators  $L(n)$, $\a(n)$, odd generators $G(n+\frac{1}{2})$, $\ps(n+\frac{1}{2})$, $n\in\Z$, and three central elements $C_{L}$, $C_{\a}$, $\Cla$, subject to the following super-commutator relations:
\bea
&&[\a(m),\a(n)] =\d_{m+n,0}mC_{\a} \nn
&&[L(m),\a(n)] =-n\a(m+n)-\d_{m+n,0}(m^{2}+m)\Cla \nn
&&[L(m),L(n)]= (m-n)L(m+n)+\d_{m+n,0} \frac{m^{3}-m}{12}C_{L} \nn
&&[ \ps (m + \frac{1}{2}), \ps(n+ \ds\frac{1}{2} ) ]_{+}=\d_{m+n+1,0}C_{\a} \nn
&&[\a(m),\ps(n+ \frac{1}{2})]=0 \nn
&&[G (m+ \frac{1}{2}),G (n+  \frac{1}{2}) ]_{+}=2L(m+n+1)+\d_{m+n+1,0} \frac{m^{2}+m}{3}C_{L} \nn
&&[L(m),G (n+  \frac{1}{2})]=( \frac{m}{2}-n- \frac{1}{2})G (m+n+ \frac{1}{2}) \nn
&&[\a(m),G (n+\ \frac{1}{2})]=m\ps(m+n+ \frac{1}{2} ) \nn
&&[\ps(m+\ds\frac{1}{2}),L(n) ]=\frac{2m+n+1}{2}\ps (m+n+ \frac{1}{2}); \nn
&&[\ps (m+\frac{1}{2} ),G (n+ \frac{1}{2}) ]_{+}=\a(m+n+1) + 2m \d_{m+n+1,0} \Cla\nn
&&[\mathcal{SH},C_{\a}]=[\mathcal{SH},C_{L}]=[\mathcal{SH},\Cla]=0\nonumber
\eea 
\end{definition}
Lie superalgebra $\SH$ has the following triangular decomposition:
\bea
\SH &=&  \SH^{-} \oplus   \SH^{0}  \oplus  \SH^{+}, \quad \mbox{where} \nonumber \\
\SH^{\pm} &=& \mbox{span}_{\C} \{ L(\pm n), \alpha (\pm n),  G( \pm (n-1/2) ), \Psi( \pm (n-1/2) ) \vert \ n \in {\Z}_{>0}  \} \nonumber \\
\SH^{0} &=& \mbox{span}_{\C} \{ L(0), \alpha(0),  C_{L}, C_{\a}, \Cla \}. \nonumber 
  \eea

Let $V (c_L,c_\a,\cla,h, h_{\a})$ denote the Verma module of highest weight $(h,h_\a)$ and central charge $(c_L,c_\a,\cla)$. We showed  (\cite{AJR}) that there is a universal vertex algebra associated to  $\mathcal{SH}$, and it is realised as
\bea 
V^\SH (c_L, c_{\a}, \cla)\cong\frac{V (c_L,c_\a,\cla,0,0)}{\<G(-\frac{1}{2})v\>}. \label{voa-as-kvocijent}
\eea
Basis  of $V^\SH (c_L, c_{\a}, \cla)$ consists of monomials
$$ (\Psi_{-\l^-} \a_{-\mu^-} G_{-\l^+} L_{-\mu^+}) {\bf  1}, \quad \mu^+ _i \ne 1, \l ^+ \ne 1/2.$$

The vertex operator $Y(\cdot,z)$ is uniquely determined by
\bea
&&Y(\a(-1) {\bf 1}, z ) = \a(z) = \sum_{n\in\Z}\a(n) z^{-n-1}, \nn
&&Y(L(-2) {\bf 1}, z) = L(z) = \sum_{n\in\Z}L(n)z^{-n-2},  \nn
&&Y \left(\Psi(-\frac{1}{2}) {\bf 1}, z\right) = \ps(z) = \sum_{n\in\Z}\ps(n+\frac{1}{2})z^{-n-1}, \nn
&&Y\left(G(-\frac{3}{2}) {\bf 1}, z\right) = G(z) = \sum_{n\in\Z}G(n+\frac{1}{2})z^{-n-2}. \nonumber
\eea

By construction of the universal vertex algebra, the basis of 
$V^\SH (c_L, c_{\a}, \cla)$ consists of monomials
$$ (\Psi_{-\l^-} \a_{-\mu^-} G_{-\l^+} L_{-\mu^+}) {\bf  1}$$
where $\l^-$  is a super-partition in $\tfrac{1}{2} + {\Z}_{>0}$, $\l^+$ is a super-partition in $\tfrac{3}{2} + {\N}$, $\mu^-$ is a partition in ${\N}$ and  $\mu^+$ a partition in ${\Z}_{>1}$.
The $q$-character of $V^\SH (c_L,c_\a,\cla)$ is thus given by
\bea
  \ch  V^\SH (c_L, c_{\a}, \cla) & = &    \prod_{k=1} ^{\infty} \prod_{l=2} ^{\infty}   \frac{ (1 + q^{k-1/2}) }{(1-q^k)  }  \frac{ (1 + q^{l -1/2}) }{(1-q^l)  } \nonumber \\
&= & (1 -q ^{ 1/2  } ) \prod_{k=1} ^{\infty} \frac{ (1 + q^{k-1/2}) ^2}{(1-q^k) ^2}.      \label{char-universal}
\eea

In \cite{AJR} we showed that when $c_\a \ne 0$,  $V^\SH (c_L,c_\a,\cla)$ is isomorphic to the tensor product of the  $N=1$ Neveu-Schwarz vertex algebra and Heisenberg-Clif{}ford vertex algebra $SM(1)$.
In this paper we study the structure of  the vertex algebra $V^\SH(c_L, \cla):=V^\SH(c_L, 0, \cla)$ and its representations.
For the rest of the paper we assume that $\cla\neq0$ and write $V (h, h_{\a})$ for the Verma module $V (c_L,0,\cla,h, h_{\a})$. Let $L(h, h_{\a})$ denote its simple quotient.

Denote by $V[p,r]$ (resp.\ $L[p,r]$) the Verma module $V(h, h_{\alpha})$ (resp.\ the irreducible highest weight module $L(h, h_{\alpha})$) with the highest weight
\bea (h, h_{\alpha}) : = (h_{p,r}, (1+p) c_{L, \alpha} ), \quad h_{p,r} = (1-p^2)\frac{c_L-3}{24} - rp \label{hpr}  \eea
Note that $h_{p,r}+p=h_{p,r-1}$.
\begin{proposition}\label{realizacija1}
\item[(1)] Let $(h,h_\a)\in\C^2$ such that $h_\a\neq\cla$. Then there exist unique $p,r\in\C$, $p\neq0$ such that $h=h_{p,r}$ and $h_\a=(1+p)\cla$.
\item[(2)] For every $r\in\C$, $h_{0,r}=\frac{c_L-3}{24}$.
\end{proposition}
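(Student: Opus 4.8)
The plan is to solve the defining system (\ref{hpr}) directly; both parts are elementary once one recalls that $h_{p,r} = (1-p^2)\frac{c_L-3}{24} - rp$.

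For part (2) I would substitute $p = 0$ into (\ref{hpr}): the term $-rp$ vanishes and $(1-p^2)\frac{c_L-3}{24}$ collapses to $\frac{c_L-3}{24}$, with no dependence on $r$. This remark also explains why the value $p = 0$ (equivalently $h_\a = \cla$) must be excluded in part (1): in that case the weight $h$ carries no information about $r$, so uniqueness of $r$ cannot hold.

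For part (1) I would argue in two stages. First, use the equation $h_\a = (1+p)\cla$. Since $\cla \neq 0$ is a standing assumption of the paper, this equation has the unique solution $p = h_\a/\cla - 1$, so $p$ is uniquely determined by $h_\a$; moreover the hypothesis $h_\a \neq \cla$ is precisely the inequality $1 + p \neq 1$, i.e.\ $p \neq 0$, so this $p$ is nonzero as required. Second, with $p \neq 0$ now fixed, the equation $h = (1-p^2)\frac{c_L-3}{24} - rp$ is linear in $r$ with nonzero leading coefficient $-p$, hence has the unique solution $r = \frac1p\bigl((1-p^2)\frac{c_L-3}{24} - h\bigr)$. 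These values of $(p,r)$ manifestly satisfy both equations (existence), and conversely any admissible pair $(p,r)$ must satisfy the same two equalities and therefore coincides with the one just found (uniqueness).

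There is no genuine obstacle here; the only point requiring a moment's care is the equivalence $h_\a \neq \cla \Longleftrightarrow p \neq 0$, together with the use of the standing hypothesis $\cla \neq 0$ to invert the linear relation between $p$ and $h_\a$.
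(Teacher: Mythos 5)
Your proof is correct: solving $h_\a=(1+p)\cla$ for $p$ (using the standing assumption $\cla\neq0$) and then the linear equation $h=(1-p^2)\frac{c_L-3}{24}-rp$ for $r$ (using $p\neq0$) is exactly the intended elementary computation, and the paper itself omits the proof precisely because it is this routine. Your remark that part (2) explains why $p=0$ must be excluded in part (1) is a nice touch consistent with the paper's framing.
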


\vspace{5pt}
In \cite{AJR} we obtained the determinant formula
$$\det[V(h, h_\a)](\cdot, \cdot)  _{n/2} = Const \prod_{  \tiny \begin{array}{ccc}
 	k, l \in {\Z} _{\ge 0} \\ k l \le n \\ k \equiv l \ \mbox{mod} 2
 	\end{array} } \varphi _{k,l} (c_L, c_\a, \cla, h, h_\a)   ^ {p_2 (\frac{n -kl}{2} )}$$
where $\varphi _{k,l} (c_L, \cla, c_\a, h, h_\a) $ denotes
$$\frac{\cla^4 }{4}  \left(1  + k-  \frac{ h_\a} {\cla} \right) \left(-1  + k+  \frac{ h_\a} {\cla} \right) \left(1+ l-  \frac{ h_\a} {\cla} \right) \left(-1+ l +  \frac{ h_\a} {\cla} \right)$$
and $p_2(n)$ is Kostant partition function in $\tfrac{1}{2}\Zp$. As a direct application we get the following:
\begin{theorem}\cite{AJR}\label{struktura}
\begin{enumerate}
    \item The Verma module $V[p,r]$ is irreducible if and only if $\vert p\vert\notin \N$.
    \item If $p\in\Z\setminus\{0\}$ is even, $V[p,r]$ contains a singular vector at conformal weight $\vert p\vert$.
    \item If $p\in\Z$ is odd, $V[p,r]$ contains a singular vector at conformal weight $\vert p/2\vert$.
\end{enumerate}
\end{theorem}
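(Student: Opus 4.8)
The plan is to read off all three statements from the determinant formula displayed just above the theorem. First I would record what the factor $\varphi_{k,l}$ becomes on the highest weight of $V[p,r]$. By \eqref{hpr} this weight is $(h_{p,r},h_\a)$ with $h_\a=(1+p)\cla$, and since $\cla\neq0$ we have $h_\a/\cla=1+p$; hence the four linear factors of $\varphi_{k,l}$ specialise to $k-p,\ k+p,\ l-p,\ l+p$, so that
\[
\varphi_{k,l}\bigl(c_L,0,\cla,h_{p,r},(1+p)\cla\bigr)=\frac{\cla^4}{4}\,(k^2-p^2)(l^2-p^2).
\]
Because $\cla\neq0$, this number is zero precisely when $|p|\in\{k,l\}$. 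Since $k,l$ range over positive integers, a factor occurring in $\det[V[p,r]]_{n/2}$ for some $n\in\Zp$ can vanish if and only if $|p|\in\N$.

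For (1): if $|p|\notin\N$ then every $\varphi_{k,l}$ in the product is nonzero, so $\det[V[p,r]]_{n/2}\neq0$ for all $n$; hence the contravariant (Shapovalov) form is nondegenerate on every graded subspace of $V[p,r]$, its radical---which is the maximal proper submodule---is trivial, and $V[p,r]$ is irreducible. (This also covers $p=0$: there $\varphi_{k,l}=\tfrac{\cla^4}{4}k^2l^2\neq0$ for $k,l\geq1$.) Conversely, if $|p|\in\N$, take $k=|p|$ and $l$ the least positive integer with $l\equiv|p|\pmod 2$; then $\varphi_{k,l}=0$ and, for every $n\geq kl$, this factor appears in $\det[V[p,r]]_{n/2}$ raised to the positive power $p_2\bigl(\tfrac{n-kl}{2}\bigr)$, so the determinant vanishes and $V[p,r]$ is reducible. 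This proves (1).

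For (2) and (3) I would locate the first level at which $\det[V[p,r]]_{n/2}$ vanishes. Let $N\subset V[p,r]$ be the maximal proper submodule; it is graded, and writing $d=\tfrac12 n_0$ with $n_0$ the smallest $n$ for which $\det[V[p,r]]_{n/2}=0$, any nonzero homogeneous vector of $N_d$ is singular: $\SH^+$ maps it into graded pieces of relative weight $<d$, which vanish by minimality of $d$. From the displayed formula, $n_0=\min\{\,kl:\ \varphi_{k,l}=0,\ k\equiv l\pmod 2\,\}$. If $p$ is a nonzero even integer, a vanishing $\varphi_{k,l}$ forces $|p|\in\{k,l\}$ with $k,l$ even, and the minimum of $kl$ over such pairs is $2|p|$, attained at $(k,l)=(|p|,2)$; hence $d=|p|$, and $V[p,r]$ has a singular vector at conformal weight $|p|$. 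If $p$ is odd, then $|p|\in\{k,l\}$ with $k,l$ odd and the minimum of $kl$ is $|p|$, attained at $(|p|,1)$; hence $d=|p|/2$, giving a singular vector at conformal weight $|p/2|$.

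The main obstacle is really only the last, elementary optimisation: one has to honour the parity constraint $k\equiv l\pmod 2$ and confirm that no factor $\varphi_{k,l}$ vanishes at a product $kl$ below $2|p|$ (even case) or $|p|$ (odd case), so that the Shapovalov form is provably nondegenerate, hence $N$ is zero, below the claimed weight. Everything else is the standard dictionary between nonvanishing of the Shapovalov determinant and irreducibility, and between its first vanishing and the existence of a singular vector; the substantive input---the determinant formula itself---is imported from \cite{AJR}.
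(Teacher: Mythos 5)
Your argument is correct and is exactly the route the paper intends: the theorem is stated as ``a direct application'' of the determinant formula from \cite{AJR}, and your specialisation $\varphi_{k,l}=\tfrac{\cla^4}{4}(k^2-p^2)(l^2-p^2)$ together with the standard dictionary between the first vanishing of the Shapovalov determinant and the existence of a singular vector is precisely the intended (unwritten) proof. The only point worth flagging is that you silently read the index range as $k,l\geq 1$ rather than the paper's literal $k,l\in\Z_{\geq0}$; this is the only reading under which the formula is consistent (otherwise $\varphi_{k,0}$ would force vanishing at every level), so your correction is justified.
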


\subsection{Contragredient modules}

As in \cite{AR1} we use the concept of contragredient modules. 

Let $V$ be a vertex operator superalgebra, 
$(M,Y_{M})$ a graded $V$-module with gradation $M=\oplus_{n \in \tfrac {1}{2} {\Z}_{\ge 0} }M(n)$ such that $\dim M(n)<\infty$ and let $\gamma\in{\C}$ such that $L(0)|M(n)\equiv(\gamma+n)\operatorname{Id}$. The contragredient module $M^{\ast}$ is def{}ined as follows. For every $n\in{ \tfrac{1}{2} \Zp}$ let $M(n)^{\ast}$ be the dual vector space and $M^{\ast} = \oplus_{ n \in \tfrac {1}{2} {\Z}_{\ge 0} }M(n)^{\ast}$. Consider the natural pairing $\<\cdot,\cdot\>:M^{\ast}\otimes M\rightarrow\C$. Def{}ine the linear map $Y_{M^{\ast}}:V\rightarrow\operatorname{End}M^{\ast}[[z,z^{-1}]$ such that
\[
\< Y_{M^{\ast}}(v,z)w^{\prime},w\>=\< w^{\prime},Y_{M}(e^{zL(1)} e^{\pi i L(0)}  z^{-2 L(0) } v,z^{-1})w\>
\]
for each $v\in V$, $w\in M$, $w^{\prime}\in M^{\ast}$. Then $(M^{\ast},Y_{M^{\ast}})$
carries the structure of a $V$--module. Direct calculations show that
\begin{eqnarray}
\<L(n)w',w\>&=&\<w',L(-n)w\>\label{contra1}\\
\<\a(n)w',w\>&=&\<w',(-\a(-n)+2\cla\delta_{n,0})w\>\\
 \<G(n+1/2)w',w\>&=&\<w',-\iu G(-n-1/2)w\> \\
\<\Psi(n+1/2)w',w\>&=&\<w',\iu\Psi(-n-1/2)w\>\label{contra2}
\end{eqnarray}

If we take $M$ to be the simple, highest weight module $L(h, h_{\alpha})$, then one gets (cf.\ \cite{FHL}) that $L(h,h_\a)^{\ast}$ is again a simple module and the above calculations shows:
\begin{lemma}
We have
\begin{enumerate}
	\item$L(h,h_\a)^{\ast}\cong L(h,-h_\a+2\cla)$, i.e\ $L[p,r]^{\ast}\cong L[-p,-r]$.
	\item $L(h,h_\a)^{\ast}\cong L(h,h_\a)$ if and only if $h_{\alpha} = \cla$ i.e. $p=0$.
\end{enumerate}
\end{lemma}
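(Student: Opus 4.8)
The plan is to use the general machinery of contragredient modules recalled just above, together with the parametrization of highest weights, and read off both statements from the explicit action of $\alpha(0)$ and $L(0)$ on the natural pairing. First I would observe that $L(h,h_\a)$ is a highest weight module, so its graded pieces are finite-dimensional and $L(0)$ acts semisimply with the bottom component $L(h,h_\a)(0)$ one-dimensional, spanned by the highest weight vector $v$. By the cited result of \cite{FHL}, the contragredient $L(h,h_\a)^\ast$ is again a simple module; since $\SH$ is generated by $L(n)$, $\alpha(n)$, $G(n+1/2)$, $\Psi(n+1/2)$, to identify $L(h,h_\a)^\ast$ it suffices to exhibit in it a highest weight vector and compute its $L(0)$- and $\alpha(0)$-eigenvalues. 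The dual basis vector $v^\ast\in L(h,h_\a)(0)^\ast\subset L(h,h_\a)^\ast$ is annihilated by $\SH^+$ (this follows from the pairing formulas \eqref{contra1}--\eqref{contra2}, since the positive modes act on $v^\ast$ via the negative modes on $v$ up to scalar, and $v^\ast$ pairs to zero with everything not in the bottom piece). Then \eqref{contra1} gives $\langle L(0)v^\ast,v\rangle = \langle v^\ast, L(0)v\rangle = h$, so the highest weight of $L(h,h_\a)^\ast$ has $L(0)$-eigenvalue $h$, and the $\alpha(0)$ formula gives $\langle \alpha(0)v^\ast,v\rangle = \langle v^\ast,(-\alpha(0)+2\cla)v\rangle = -h_\a+2\cla$.

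This already yields part (1): $L(h,h_\a)^\ast$ is the simple highest weight module with highest weight $(h,-h_\a+2\cla)$, i.e.\ $L(h,h_\a)^\ast\cong L(h,-h_\a+2\cla)$. To translate into the $[p,r]$ notation, I would plug in $h_\a=(1+p)\cla$, so that $-h_\a+2\cla = (1-p)\cla = (1+(-p))\cla$, confirming that the $\alpha(0)$-parameter is $-p$. For the conformal weight I use Proposition \ref{realizacija1} and the formula \eqref{hpr}: one needs $h_{p,r} = h_{-p,-r}$, which is immediate since $h_{p,r} = (1-p^2)\frac{c_L-3}{24} - rp$ is invariant under $(p,r)\mapsto(-p,-r)$. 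Hence $L[p,r]^\ast\cong L[-p,-r]$.

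For part (2), the "if" direction: when $h_\a = \cla$ we have $-h_\a+2\cla = h_\a$, so by part (1) the highest weights of $L(h,h_\a)$ and $L(h,h_\a)^\ast$ coincide, and since both are simple highest weight modules with the same highest weight they are isomorphic. For the "only if" direction: if $L(h,h_\a)^\ast\cong L(h,h_\a)$, then comparing highest weights (a simple highest weight module determines its highest weight) forces $-h_\a+2\cla = h_\a$, i.e.\ $h_\a=\cla$; and by the parametrization in Proposition \ref{realizacija1}, $h_\a=(1+p)\cla=\cla$ is equivalent to $p=0$ (using $\cla\neq0$). The main obstacle, such as it is, is purely bookkeeping: one must be careful that the twist $e^{zL(1)}e^{\pi i L(0)}z^{-2L(0)}$ defining $Y_{M^\ast}$ genuinely produces the relations \eqref{contra1}--\eqref{contra2} with the stated signs and the $2\cla\delta_{n,0}$ shift — but this is exactly the "direct calculation" already asserted in the excerpt, so I would simply cite it. Everything else is the standard fact that a simple module in category $\mathcal O$ is determined by its highest weight, applied twice.
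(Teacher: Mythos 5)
Your proposal is correct and follows essentially the same route as the paper, which simply asserts that the pairing relations (\ref{contra1})--(\ref{contra2}) together with the simplicity of the contragredient (citing \cite{FHL}) identify the highest weight of $L(h,h_\a)^{\ast}$ as $(h,-h_\a+2\cla)$, after which the translation $h_\a=(1+p)\cla\mapsto(1+(-p))\cla$ and the symmetry $h_{p,r}=h_{-p,-r}$ give the $[p,r]$ form. Your explicit verification that the dual highest weight vector is annihilated by $\SH^{+}$ and the eigenvalue computation for $\alpha(0)$ are exactly the ``above calculations'' the paper leaves implicit.
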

From the previous Lemma we have
$$\ch L(h,h_\a)^{\ast} = \ch L(h,h_\a)\qquad\text{i.e.}\qquad\ch L[p,r] = \ch L[-p,-r].$$

\section{Free f{}ield realisation}\label{realizacija}

In this section we recall the realisation from \cite{AJR}. Let $L={\Z}c  + {\Z} d$ be a lattice such that
$\< c,  c \>  = \< d, d\> = 0$, $\< c, d \> =2$, let $V_L = {\C}[L] \otimes M(1)$ be the corresponding lattice vertex algebra, where $M(1)$ is Heisenberg vertex algebra generated by $c(z)$ and $d(z)$.

Consider the vertex subalgebra $\Pi(0) = {\C}[\Z c] \otimes M(1)$ of $V_L$. As in \cite{A-2019}, let $\Pi(0) ^{1/2}$ be its simple current extension: 
$$ \Pi(0) ^{1/2} = \Pi(0) \oplus \Pi(0). e^{\frac{c}{2}. }$$ 
Let $F^{(2)}$ be the fermionic vertex algebra generated by f{}ields
$$\Psi^\pm (z) = \sum _{n \in {\Z}} \Psi^\pm \left(n+ \frac{1}{2}\right) z^{-n-1}$$ 
such that for $i =1,2$, $r,s \in \frac{1}{2} + {\Z}$ we have the following anti-commutator relation
$$ \{ \Psi^\pm (r), \Psi^\pm (s) \} = 0 , \ \{\Psi^+ (r), \Psi^- (s) \} = \delta_{r+s,0}.$$
The vertex algebra $F^{(2)}$ has the following Virasoro vector of central charge $c_{fer} =1 $:
$$\omega_{fer}= \frac{1}{2} \left( \Psi^+(-\frac{3}{2}) \Psi^- (-\frac{1}{2}) +  \Psi^-(-\frac{3}{2}) \Psi^+ (-\frac{1}{2}) \right) {\bf 1}. $$
Def{}ine the following four vectors in the vertex algebra $M(1) \otimes F^{(2)}$:
\bea
\alpha &=& - \cla c(-1) \nonumber \\
\tau &=&\sqrt{2}\left(\frac{1}{2} c(-1) \Psi^+(-\frac{1}{2}) + \frac{1}{2} d(-1) \Psi^- (-\frac{1}{2}) + \frac{c_L - 3}{12} \Psi^-(-\frac{3}{2}) - \Psi^+(-\frac{3}{2})\right) \nonumber \\
\omega &=& \frac{1}{2} c(-1) d(-1) + \frac{c_L -3}{24} c(-2) - \frac{1}{2} d(-2) + \omega_{fer} \nonumber \\
\Psi &=& -\sqrt{2} \cla\Psi^- (-\frac{1}{2}). \nonumber 
\eea

\begin{theorem} \label{simplicity-voa}
The universal vertex algebra  $V^\SH(c_L, \cla)$ is simple, and it is isomorphic to 
the vertex subalgebra of $M(1) \otimes F^{(2)}$ generated by $\alpha, \Psi, \tau, \omega$.  
\end{theorem}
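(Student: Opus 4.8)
There are two assertions to prove: that $\alpha,\tau,\omega,\Psi$ generate a copy of $V^\SH(c_L,\cla)$ inside $M(1)\otimes F^{(2)}$, and that this vertex algebra is simple. I would proceed in three steps.

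\emph{Step 1: a surjection onto the subalgebra $W\subseteq M(1)\otimes F^{(2)}$ generated by $\alpha,\tau,\omega,\Psi$.} First I would verify, by Wick's theorem (the only non-zero contractions being $c(z)d(w)\sim 2(z-w)^{-2}$ and $\Psi^+(z)\Psi^-(w)\sim(z-w)^{-1}$), that the fields $Y(\alpha,z),Y(\tau,z),Y(\omega,z),Y(\Psi,z)$ are mutually local and have operator product expansions equivalent to the super-commutation relations of $\SH$ in Definition~\ref{def} with $C_L\mapsto c_L$, $C_\alpha\mapsto 0$, $\Cla\mapsto\cla$; the ``background charge'' terms $\tfrac{c_L-3}{24}c(-2)$, $-\tfrac12 d(-2)$ in $\omega$ and $\tfrac{c_L-3}{12}\Psi^-(-\tfrac32)-\Psi^+(-\tfrac32)$ in $\tau$ are exactly what produces the Virasoro central charge $c_L$, the cubic pole in $Y(\omega,z)Y(\alpha,w)$, and the $2m$-anomaly in the $\Psi$--$G$ bracket (this is the computation recalled from \cite{AJR}). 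Then $M(1)\otimes F^{(2)}$ is an $\SH$-module on which $\mathbf 1$ is a highest weight vector of weight $(0,0)$, central charge $(c_L,0,\cla)$, with $G(-\tfrac12)\mathbf 1=\tau_{(0)}\mathbf 1=0$, so by the presentation \eqref{voa-as-kvocijent} there is a vertex algebra homomorphism $\Phi\colon V^\SH(c_L,\cla)\to M(1)\otimes F^{(2)}$ carrying the generators to $\alpha,\tau,\omega,\Psi$ and having image $W$.

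\emph{Step 2: $\Phi$ is injective.} I would show that the images of the monomial basis $(\Psi_{-\lambda^-}\alpha_{-\mu^-}G_{-\lambda^+}L_{-\mu^+})\mathbf 1$ are linearly independent in $M(1)\otimes F^{(2)}$. Under $\Phi$ each $\alpha(-n)$ becomes the creation operator $-\cla\,c(-n)$ and each $\Psi(-j)$ becomes $-\sqrt2\,\cla\,\Psi^-(-j)$, while $L(-n)\mathbf 1$ and $G(-j)\mathbf 1$ have the nonzero ``pure creation'' leading terms $\tfrac{1-n}{2}d(-n)\mathbf 1$ and $-\sqrt2(j-\tfrac12)\Psi^+(-j)\mathbf 1$ in the filtration of $M(1)\otimes F^{(2)}$ ordering PBW monomials first by $\#d+\#\Psi^+$, then by decreasing total degree, then lexicographically. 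Since commuting $L(-n)$ and $G(-j)$ through lower PBW monomials raises $\#d+\#\Psi^+$ only when it raises the total degree, the leading term of $\Phi$ applied to a basis monomial is a nonzero multiple of the PBW monomial $c(-\mu^-)\,d(-\mu^+)\,\Psi^+(-\lambda^+)\,\Psi^-(-\lambda^-)\mathbf 1$ of $M(1)\otimes F^{(2)}$, from which the four (super-)partitions are recovered. Distinct basis monomials therefore have distinct leading terms, so $\Phi$ is injective and $V^\SH(c_L,\cla)\cong W$.

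\emph{Step 3: $W$ (hence $V^\SH(c_L,\cla)$) is simple.} Since $W_0=\C\mathbf 1$, a non-zero proper ideal of $W$ would contain a singular vector $w$ of positive conformal weight, i.e.\ a non-zero $w\in M(1)\otimes F^{(2)}$ annihilated by all positive modes of $Y(\alpha,z),Y(\tau,z),Y(\omega,z),Y(\Psi,z)$. I would first observe that $\alpha(n)=-\cla\,c(n)$ acts on $M(1)$ as a nonzero multiple of $\partial_{d(-n)}$ and that $\Psi(n+\tfrac12)=-\sqrt2\,\cla\,\Psi^-(n+\tfrac12)$ acts on $F^{(2)}$ as $\partial_{\Psi^+(-n-\frac12)}$, so annihilation by these $(n\ge1$, resp.\ $n\ge0)$ forces $w$ to contain no $d$'s and no $\Psi^+$'s: $w\in U:=\C[c(-i):i\ge1]\otimes\Lambda[\Psi^-(-j-\tfrac12):j\ge0]$. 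Projecting the remaining relations $L(n)w=0$ $(n\ge1)$ and $G(n+\tfrac12)w=0$ $(n\ge0)$ onto $U$ turns them into differential-operator identities: the $U$-projection of $L(n)$ is $n(n+1)\partial_{c(-n)}$ plus degree-preserving, weight-lowering ``sliding'' operators on the $c$- and on the $\Psi^-$-variables (from the $c(z)d(z)$-part of $Y(\omega,z)$ and from $\omega_{fer}$), and the $U$-projection of $G(n+\tfrac12)$ is $\sqrt2(n+1)\partial_{\Psi^-(-n-\frac12)}$ plus operators shifting the $c$- and $\Psi^-$-degrees by $\pm1$ (from the $c(z)\Psi^+(z)$- and $d(z)\Psi^-(z)$-parts of $Y(\tau,z)$). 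A descent on the highest $c$-degree occurring in $w$, via the $L(n)$-relations, forces that top component to be killed by all the sliding operators, hence supported on $c(-1)$'s only; a further descent together with the $G(n+\tfrac12)$-relations forces $w$ to have $c$-degree $0$, at which point the $G(n+\tfrac12)$-projections reduce to $\sqrt2(n+1)\partial_{\Psi^-(-n-\frac12)}$ and $w$ has no $\Psi^-$ either. Thus $w\in\C\mathbf 1$, contradicting positive weight, so $W$ --- and hence $V^\SH(c_L,\cla)$ --- has no non-zero proper ideal.

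I expect Step 3 to be the hard part. One cannot obtain simplicity from the determinant formula instead, because the vacuum module corresponds to $p=-1<0$ odd in the parametrisation \eqref{hpr}, exactly the regime in which the structure of the Verma module is subtle --- non-injective maps between Verma modules, subsingular vectors --- and, in this paper, only conjectural; so the free-field computation of Step 3 really is needed, and the main work there is controlling the sliding operators on $U$ and running the double descent on $c$-degree and $c$-weight cleanly. Step 2 is routine once a suitable filtration is fixed, and Step 1 is the OPE verification of \cite{AJR}.
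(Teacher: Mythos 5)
Your proposal is correct, but it proves the theorem by a different route than the paper does. The paper's own proof is very short: it quotes \cite{AJR} for the fact that the subalgebra $W$ generated by $\alpha,\Psi,\tau,\omega$ is \emph{some} quotient of $V^\SH(c_L,\cla)$, observes that $V^\SH(c_L,\cla)$ is a highest-weight quotient of the Verma module $V[-1,0]$, and then compares the PBW character (\ref{char-universal}) with $\ch L[-1,0]$ computed (later, in Theorem \ref{xx}) to conclude $V^\SH(c_L,\cla)=L[-1,0]$ is simple, whence $W=V^\SH(c_L,\cla)$; this is a forward reference but not circular, since Theorem \ref{xx} is proved from the free-field action alone. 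Your argument instead establishes injectivity of $\Phi$ and simplicity of $W$ directly, and both of your key steps are in fact present elsewhere in the paper, specialised to $(p,r)=(-1,0)$: your Step 2 is the computation of Section \ref{racun} and Lemma \ref{nejed} --- the leading coefficients $\tfrac{\mu^+_i+p}{-2}$ and $\tfrac{2\l^+_i+p}{-\sqrt 2}$ are nonzero at $p=-1$ precisely because the universal PBW basis excludes $\mu^+_i=1$ and $\l^+_i=\tfrac12$ --- and your Step 3 is Proposition \ref{ired-pmanji} together with Lemma \ref{nema-sing-c}, where the paper organises your ``double descent'' more cleanly by applying the transposed monomial $Y^{L,G}_{\bar\l}$ of the minimal partition in the support and checking that the constants $\mu_1(p-\mu_1)$ and $p-\mu_1-1$ do not vanish for $p\in\Z_{<0}$; I would recommend adopting that formulation, since your sketch of the descent is the one genuinely under-detailed point. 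The trade-off: your route is self-contained at this stage and avoids the screening operators and subsingular vectors that feed into the upper bound of Theorem \ref{xx}, while the paper's route is shorter on the page at the cost of deferring the real work to the character formula; your remark that the determinant formula cannot settle simplicity here (the vacuum sits at $p=-1$, the delicate odd negative case) is accurate and is exactly why the paper, too, does not use it.
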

\begin{proof}
We have shown in \cite{AJR} that the vertex algebra  $W$ generated  by $\alpha, \Psi, \tau, \omega$ is isomorphic to some quotient of $V^\SH(c_L,\cla)$.   
Note  that $V^\SH(c_L,\cla)$ is isomorphic to a certain quotient of  the Verma module $V[-1,0]$. But we will prove in Theorem \ref{xx}, that the character of  $V^\SH(c_L,\cla)$, given by (\ref{char-universal}), coincides with the character of $L[-1,0]$. This proves that  $V^\SH(c_L,\cla)$ is simple. In particular,  $W= V^\SH(c_L,\cla)$.
\end{proof}

Now for each $h\in\C\otimes_\Z L$, let $e^h$ denote a $M(1)$ highest weight vector in the Fock module $M(1,h),$ which is also a $V^\SH(c_L,\cla)$-module. We introduce a parametrisation 
\begin{equation}\label{param}
\vpr:=e^{-\frac{p+1}{2}\overline d+r c},\qquad\text{where}\quad\overline d=d-\tfrac{c_L-3}{12}c.
\end{equation}
Then $\vpr$ is the highest weight vector of highest weight given by (\ref{hpr}).
Also, def{}ine $$\mathcal F_{p,r} =  ( M(1) \otimes F^{(2)}) . \vpr. $$
Now we shall identify  the contragredient module $\mathcal F_{p,r} ^{\ast}$. Using
(\ref{contra1}-\ref{contra2}) and
\begin{eqnarray*}
 \<c(n)w',w\>&=&\<w',c(-n)+2\delta_{n,0}w\>,\\
\<d(n)w',w\>&=&\<w',d(-n)-\delta_{n,0}\frac{c_L-3}{6}w\>,\\
\<\Psi^{\pm} (n+1/2)w',w\>&=&\<w',\iu\Psi^{\pm} (-n-1/2)w\>,
 \end{eqnarray*}
we get 
$$ \mathcal F_{p,r}^{\ast}\cong\mathcal F_{-p,-r}. $$

\subsection{Realisation of Verma modules $V[p,r]$,  $p \in {\C} \setminus {\Z}_{<0}$.}\label{racun}

In this section, we shall prove that Verma modules $V[p,r]$, $p \notin {\Z}_{<0}$ can be obtained by using free-f{}ield realisation.

Let $ \mathcal W = V^\SH(c_L,\cla).\vpr$. Take the following basis of $\mathcal F_{p,r} $ consisting of monomials:
\bea
   w_{\l^+, \l^-, \mu^+, \mu ^-} &=&( \Psi^{+}  _{-\l^+} \Psi ^-  _{- \l^-} d_{-\mu^+ } c_{-\mu^- } )  \vpr  \label{PBW-2} 
\eea
where $\l^{\pm} \in \mathcal{SP}$, and $\mu^{\pm}  \in \mathcal P$.
Now, since $[c(n), c(m)]=[d(n), d(m)]=0$, $\{ \Psi^\pm (r), \Psi^\pm (s) \} = 0$, and $\{\Psi^+ (r), \Psi^- (s) \} = \delta_{r+s,0}$ and from the def{}inition of $v_{p, r}$, one can def{}ine the following partial ordering on basis vectors:
\bea
 w_{\l^+, \l^-, \mu^+, \mu^-} <    w_{\bar \l^+, \bar  \l^-, \bar \mu^+, \bar \mu^-}  \  \mbox{if}  \  (\l^+, \mu^+) <   (\bar \l^+,  \bar \mu^+). \label{ordering-1} \eea
Let us check whether the arbitrary basis vector (\ref{PBW-2}) belongs to $\mathcal W$.

Let $ \deg_{\l^+, \mu^+}   = 0$ (then automatically  $\ell_{\l^+, \mu^+} = 0$).
Since $$ \a_{-\mu}=(-\cla)^{\ell_\mu} c_{-\mu},\quad\Psi_{-\l}=(-\sqrt{2}\cla)^{\ell_\l}\Psi^-_{-\l},$$
we have 
$$( \Psi^-_{-\l^-}  c_{-\mu^-} )  \vpr  \in \mathcal W.$$
Let $S$ be the set of all basis vectors which don't belong to  $\mathcal W$. Assume that $S\ne \emptyset$. By the Zorn's lemma there must exist a minimal element $w:=w_{\l^+, \l^-, \mu^+, \mu^-}$ of $S$ with respect to the ordering "$<$". This means that for every $w' <w$, $w' \in \mathcal W$.

Assume f{}irst that $\ell_{\mu^+} = l >0$. Let $\mu^+= (\mu_1, \mu_2, \dots, \mu_l)$ and 
$\bar \mu^+ :=  (\mu_2, \dots, \mu_{l})$. Def{}ine 
$$ w' = ( \Psi^{+} _{-\l^+} \Psi ^- _{-\l^-}    d_{-\bar \mu^+}  c _{-\mu^-}  ) \vpr $$
Then by the assumption $w' \in \mathcal W$. We have
\bea
&&L(-\mu_1) w' =- \frac{p+\mu_1}{2} w+\cdots \label{ff_L}
\eea
where $\cdots$ denotes a sum of monomials $w_i$ such that  $w_i  <  w$.
Using  again the assumption, we have that all $w_i$ and $L(-\mu_1) w'$ belong to $\mathcal W$, so if $p+\mu_1\neq0$ we have $w \in \mathcal W$.

Assume next that $\ell_{\mu^+}  = 0$.  
Let $\l^+ = (\l_1 ,  \l_2, \dots, \l_g)$  and $\bar \l^+ = (\l_2 , \dots, \l_{g})$.
Def{}ine
$$w' = (\Psi^{+} _{-\bar \l^+}  \Psi ^- _{-\l^-}  c_{-\mu^-} )  \vpr$$
Then we get
\bea
&&G (-\l_1)  w'=  - \frac{\sqrt{2}}{2} (2 {\l_1} +p) w   + \cdots \label{ff_G}
\eea
where $\cdots$ denotes a sum of monomials $w_i$ such that  $w_i < w$. As before, we conclude that if $2 {\l_1} +p\neq0$ then $w \in \mathcal W$.

\begin{proposition}\label{realizacija_Verma}
Assume that $p \in {\C} \setminus {\Z}_{<0}$.  Then we have:
$$\mathcal F_{p,r} \cong V[p,r].$$
\end{proposition}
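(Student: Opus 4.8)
The plan is to build a surjective homomorphism $\Phi\colon V[p,r]\to\mathcal F_{p,r}$ and then conclude by a character count. Since $\vpr$ is a highest weight vector of weight $(h_{p,r},(1+p)\cla)$ (cf.\ (\ref{param})--(\ref{hpr})), the universal property of the Verma module gives such a $\Phi$, unique with the requirement that it send the canonical generator to $\vpr$, and its image is exactly $\mathcal W=V^\SH(c_L,\cla).\vpr$. So the two things to establish are: (i) $\mathcal W=\mathcal F_{p,r}$, i.e.\ $\Phi$ is onto; and (ii) $\ch V[p,r]=\ch\mathcal F_{p,r}$. Granting both, $\Phi$ is a grading-preserving surjection between modules whose graded pieces are finite-dimensional of equal dimension, hence an isomorphism.

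For (i), I would invoke the discussion preceding the statement. The ordering (\ref{ordering-1}) on the PBW basis (\ref{PBW-2}) is well-founded (degree is a non-negative half-integer and each graded piece is finite), so if the set $S$ of basis vectors outside $\mathcal W$ were non-empty it would have a minimal element $w=w_{\lambda^+,\lambda^-,\mu^+,\mu^-}$. If $\deg_{\lambda^+,\mu^+}=0$, then $w=(\Psi^-_{-\lambda^-}c_{-\mu^-})\vpr$ lies in $\mathcal W$, since $\alpha_{-\mu}=(-\cla)^{\ell_\mu}c_{-\mu}$ and $\Psi_{-\lambda}=(-\sqrt2\,\cla)^{\ell_\lambda}\Psi^-_{-\lambda}$. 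If $\ell_{\mu^+}>0$, identity (\ref{ff_L}) expresses $w$ as $L(-\mu_1)w'$ with $w'\in\mathcal W$, up to the scalar $-\tfrac{p+\mu_1}{2}$ and a combination of $<$-smaller monomials; if $\ell_{\mu^+}=0<\ell_{\lambda^+}$, identity (\ref{ff_G}) does the same with $G(-\lambda_1)$ and the scalar $-\tfrac{\sqrt2}{2}(2\lambda_1+p)$. The hypothesis $p\notin\Z_{<0}$ is precisely what makes these scalars non-zero ($\mu_1\in\N$, so $p+\mu_1\ne0$; $2\lambda_1\in\N$, so $2\lambda_1+p\ne0$), and minimality of $w$ then forces $w\in\mathcal W$, a contradiction. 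Hence $S=\emptyset$ and $\mathcal W=\mathcal F_{p,r}$.

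For (ii), both sides equal $q^{h_{p,r}}\prod_{k\ge1}(1+q^{k-1/2})^2(1-q^k)^{-2}$. On the Verma side this is the PBW count for $\mathcal{SH}^-$. On the Fock side it is the count for the basis (\ref{PBW-2}): the families $c(-n),d(-n)$ and $\Psi^\pm\!\big(-(n-\tfrac12)\big)$, $n\in\N$, give $\prod(1-q^n)^{-2}\prod(1+q^{k-1/2})^2$, and the only thing to check is that the $L(0)$-grading on $\mathcal F_{p,r}$ matches the grading by these modes up to the global shift $h_{p,r}$; this holds because in $\omega$ the terms $\tfrac{c_L-3}{24}c(-2)$ and $-\tfrac12 d(-2)$ contribute to $L(0)=\omega_{(1)}$ only through $c(0)$ and $d(0)$, which are central in $M(1)\otimes F^{(2)}$ and act by scalars on $\mathcal F_{p,r}$. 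I expect the only real work to lie in (i) — verifying that the tails ``$\cdots$'' in (\ref{ff_L}) and (\ref{ff_G}) are honestly spanned by $<$-smaller basis monomials, and pinning down the leading coefficients — and this is exactly the computation carried out before the statement; everything else is formal.
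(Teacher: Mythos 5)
Your proposal is correct and takes essentially the same route as the paper: the paper's own proof simply cites the preceding inductive argument (that every basis monomial (\ref{PBW-2}) lies in $\mathcal W$ when $p\notin\Z_{<0}$, using the nonvanishing of $p+\mu_1$ and $2\lambda_1+p$) and then concludes by matching the $q$-character of the Verma module with that of the Fock space. Your extra remarks on well-foundedness of the ordering and on why the $L(0)$-grading matches are just explicit versions of steps the paper leaves implicit.
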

\begin{proof}
We have shown above that all basis vectors (\ref{PBW-2}) belong to $\mathcal W$ if $p\notin\Z_{<0}$. Since the $q$--character of Verma module $V[p,r]$ coincides with $q$--character of the Fock space, we conclude
$\mathcal W = V[p,r] = \mathcal F_{p,r}$.
\end{proof}

\subsection{Realisation of $L[p,r]$,  $p \in \Z_{<0}$ as submodules of $\mathcal F_{p,r}$}

Let $\mathcal C_{p,r} $ be the subspace of $\mathcal F_{p,r}$ spanned by monomials $\Psi_{-\l}\a_{-\mu}\vpr$, $\l\in\mathcal{SP}$, $\mu\in\mathcal P$. 
We f{}irst show that there can be no singular vectors in $\mathcal C_{p,r}$, and then generalise the claim to $\mathcal F_{p,r}$.

Since we are dealing with (anti)commuting factors, we will consider $(\mu,\l)\in\mathcal P\times\mathcal{SP}$ as an element of a partially ordered set of partitions  in $\tfrac{1}{2}\N$. Let $\mathcal P(\tfrac{1}{2})$ denote the set of all superpartitions $(\l_1,\ldots,\l_n)$ in $\tfrac{1}{2}\N$ such that
$$\l_i\ne\l_{i+1}\quad\text{if}\quad\l_i\in\tfrac{1}{2}+\Zp.$$
We introduce a partial ordering $<$ on $\mathcal P(\tfrac{1}{2})$ by
\bea
\l < \mu & \text{if}& \deg_\l<\deg_\mu,\nn
	&\text{or}& \deg_\l=\deg_\mu,\ \l\prec\mu.\label{ordering-1}
\eea
For each $\l\in\mathcal P(\tfrac{1}{2}),$ we def{}ine monomials
\[
X^{\a,\Psi}_{-\l}= X^{\a,\Psi}(-\l_1)\cdots X^{\a,\Psi}(-\l_{\ell(\l)}),\qquad Y^{L,G}_\l=Y^{L,G}(\l_{\ell(\l)})\cdots Y^{L,G}(\l_1)
\]
where
\bea
X^{\a,\Psi}(-\l_i)=\begin{cases}\a(-\l_i)&\l_i\in\N,\\
				\Psi(-\l_i)&\l_i\in\tfrac{1}{2}+\Zp
	\end{cases}&Y^{L,G}(\l_i)=\begin{cases}
L(\l_i)&\l_i\in\N\\
G(\l_i)&\l_i\in\tfrac{1}{2}+\Zp.
\end{cases}\nonumber
\eea

A basis of $\mathcal C_{p,r}$ is then given by the set of monomials 
\bea
w_{\l}=X^{\a,\Psi}_{-\l} \vpr,\qquad\l\in\mathcal P(\tfrac{1}{2}).\label{monom-1}
\eea

\begin{lemma}\label{annih}
Let $p\in\Z_{<0}$. Let $\l,\mu\in\mathcal P(\tfrac{1}{2}\N)$ such that $\mu<\l$. Then $Y^{L,G}_\l X^{\a,\Psi}_{-\mu}\vpr=0$ and $Y^{L,G}_\l X^{\a,\Psi}_{-\l}\vpr=\nu\vpr$ for some $\nu\ne0$.
\end{lemma}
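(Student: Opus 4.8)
The plan is to prove both assertions simultaneously by a careful bookkeeping argument on the commutation relations, treating the "leading term'' versus "lower order terms'' with respect to the ordering $<$ on $\mathcal{P}(\tfrac12)$. First I would observe that since $Y^{L,G}_\l$ is a product of operators $L(\l_i)$ and $G(\l_i)$ in decreasing order of the $\l_i$, and $X^{\a,\Psi}_{-\mu}$ a product of $\a(-\mu_j)$, $\Psi(-\mu_j)$ in decreasing order, the action of $Y^{L,G}_\l X^{\a,\Psi}_{-\mu}$ on $\vpr$ is computed by moving the annihilation operators $L(\l_i), G(\l_i)$ ($\l_i>0$) to the right, past the $X$'s, using the $\SH$-relations from Definition \ref{def}; each time we commute $L(m)$ or $G(m+\tfrac12)$ past $\a(-n)$ or $\Psi(-n+\tfrac12)$ we pick up, according to those relations, either a term $\a(m-n)$ resp.\ $\Psi$ at shifted index (which lowers $\ell$ by one and hence gives a monomial that is $<$ in the ordering \eqref{ordering-1}, since the degree is preserved but the length drops) or, in the resonant case $m=n$, a scalar times $C_\a$ or $\cla$.

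The key point that makes $p\in\Z_{<0}$ enter is that on $\vpr$ the operators $L(0)$ and $\a(0)$ act by the scalars $h_{p,r}$ and $(1+p)\cla$, and — crucially — the "diagonal'' commutators that arise, namely $[L(\l_i),\a(-\l_i)]$ and $[G(\l_i),\Psi(-\l_i+\cdots)]$ type terms applied to $\vpr$, produce factors of the shape $(2\l_i+p)$ or $(\l_1+\mu_1+p)$ exactly as already seen in \eqref{ff_L}–\eqref{ff_G}; these are \emph{nonzero} for the relevant positive half-integers precisely when $p$ is a negative integer is \emph{not} an obstruction, i.e.\ one checks $2\l_i+p\neq 0$ and similar expressions never vanish in the range of indices occurring, so the "diagonal'' contraction always survives with a nonzero coefficient $\nu$. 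I would prove the $Y^{L,G}_\l X^{\a,\Psi}_{-\l}\vpr=\nu\vpr$ statement by induction on $\ell(\l)$: strip off the largest part $\l_1$ from both sides, commute $Y^{L,G}(\l_1)$ all the way to the right; the only term that can contribute to the surviving scalar is the full contraction of $Y^{L,G}(\l_1)$ with $X^{\a,\Psi}(-\l_1)$ (all mixed terms lower the length and, after applying the inductive hypothesis to the remaining shorter superpartition, land in a monomial $<$ which — one must check — cannot be $\vpr$ itself by a degree count), leaving a nonzero scalar times $Y^{L,G}_{\bar\l}X^{\a,\Psi}_{-\bar\l}\vpr$ with $\bar\l=(\l_2,\dots,\l_n)$, to which induction applies.

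For the first statement, $Y^{L,G}_\l X^{\a,\Psi}_{-\mu}\vpr=0$ when $\mu<\l$: again induct on $\ell(\l)$. Move $Y^{L,G}(\l_1)$ to the right through $X^{\a,\Psi}_{-\mu}$. Since $\deg_\mu=\deg_\l$ or $\deg_\mu<\deg_\l$: if $\deg_\mu<\deg_\l$ the result is automatically $0$ because $Y^{L,G}_\l$ raises the conformal weight of $\vpr$ by $\deg_\l>\deg_\mu$ while $X^{\a,\Psi}_{-\mu}\vpr$ sits in weight $h_{p,r}+\deg_\mu$ and there is nothing below $h_{p,r}$ (annihilation operators kill $\vpr$); and if $\deg_\mu=\deg_\l$ but $\mu\prec\l$, i.e.\ they first differ at some index $k$ with $\mu_k>\l_k$, then $\l_1=\mu_1,\dots,\l_{k-1}=\mu_{k-1}$ and $\l_k<\mu_k$ — commuting $Y^{L,G}(\l_1)=Y^{L,G}(\mu_1)$ past $X^{\a,\Psi}_{-\mu}$, the contraction with $X^{\a,\Psi}(-\mu_1)$ peels off the first part of $\mu$ and of $\l$ simultaneously, reducing to $Y^{L,G}_{\bar\l}X^{\a,\Psi}_{-\bar\mu}\vpr$ with $\bar\mu<\bar\l$ still (the tails still first differ, and degrees match), which vanishes by induction; the non-contracting terms lower $\ell(\mu)$ producing monomials with \emph{strictly smaller} degree in the $X$-part than $\deg_{\bar\l}$, hence are annihilated by the remaining raising operators for the same weight reason.

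The main obstacle — and the step I would spend the most care on — is the bookkeeping of the "mixed'' terms in the repeated commutation: I must verify that \emph{every} term other than the single full diagonal contraction either (a) strictly decreases the degree of the $X$-side below the total degree that $Y^{L,G}_\l$ can absorb, forcing a zero by the weight/annihilation argument, or (b) decreases $\ell$ with preserved degree and lands, after the inductive hypothesis, on a monomial genuinely $<$ than $\vpr$ and hence zero. Keeping the ordering \eqref{ordering-1} consistent under "peeling off the top part on both sides'' is the delicate point: one needs that if $\mu<\l$ then the tails satisfy $\bar\mu<\bar\l$ (true because $\mu_1=\l_1$ forces equality of degrees of tails to still hold and the first-difference index just shifts down by one), and that the diagonal coefficients $\nu$ accumulated at each stage — products of the $(2\l_i+p)$, $(\l_i+\mu_i+p)$ and $C_\a$-type constants evaluated on $\vpr$ — are all nonzero, which, since $\cla\neq0$ and the parts $\l_i$ run over $\tfrac12\N$ with $p\in\Z_{<0}$, is a finite explicit check using the relations in Definition \ref{def} and the eigenvalues $h_{p,r}$, $(1+p)\cla$ of $L(0),\a(0)$.
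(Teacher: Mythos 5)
Your handling of the vanishing claim contains the decisive gap, and in fact the statement as printed cannot be proved because it is false as written: the roles of $\l$ and $\mu$ are transposed relative to what is actually needed and used in Lemma \ref{nema-sing-c}. Concretely, take $p\in{\Z}_{<0}$, $\mu=(3)$ and $\l=(2,1)$; then $\deg_\mu=\deg_\l=3$ and $\mu_1=3>2=\l_1$, so $\mu\prec\l$ and hence $\mu<\l$ in the ordering (\ref{ordering-1}), yet
\[
Y^{L,G}_\l X^{\a,\Psi}_{-\mu}\vpr=L(1)L(2)\a(-3)\vpr=3\,L(1)\a(-1)\vpr=3\bigl(\a(0)-2\Cla\bigr)\vpr=3(p-1)\cla\,\vpr\neq0 .
\]
Your induction never confronts this configuration: you only ever peel off \emph{common} leading parts ("commuting $Y^{L,G}(\l_1)=Y^{L,G}(\mu_1)$\,\dots"), so the base case of your induction is exactly the situation $\mu_1>\l_1$, which you leave unaddressed. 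There the commutator $[Y^{L,G}(\l_1),X^{\a,\Psi}(-\mu_1)]$ is a \emph{creation} mode $\a(\l_1-\mu_1)$ or $\Psi(\l_1-\mu_1)$ of negative index, so neither your length-drop argument nor your weight argument applies — and the computation above shows that no argument can close this case. The correct statement, the one the paper's own proof establishes (note its hypothesis "$\l_k\geq\mu_1$") and the one invoked in Lemma \ref{nema-sing-c} where $Y^{L,G}_{\bar\l}$ with $\bar\l$ \emph{minimal} is applied to $X^{\a,\Psi}_{-\l}$ with $\l>\bar\l$, is the transpose: $Y^{L,G}_{\mu}X^{\a,\Psi}_{-\l}\vpr=0$ for $\mu<\l$. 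For that version the annihilating string is indexed by the smaller partition, so at every stage the operator being moved right has index at least as large as every remaining part of the creation string; each commutator from Definition \ref{def} is then either a scalar (only upon exact matching of index and parity) or an $\a$/$\Psi$-mode of nonnegative index which, because $C_\a=0$, supercommutes past the remaining $X$-factors and kills $\vpr$; at the first index $r$ with $\mu_r>\l_r$ there is nothing left to contract and the expression dies.

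Your treatment of the diagonal claim $Y^{L,G}_\l X^{\a,\Psi}_{-\l}\vpr=\nu\vpr$ with $\nu\neq0$ is essentially the paper's argument: strip the top part, keep only the full contraction, and verify that the accumulated scalars — up to the nonzero factor $\cla$, these are $\l_i(p-\l_i)$ for integer parts and $p-2\l_i$ for half-integer parts, coming from $[L(n),\a(-n)]$ and $[\Psi(-n),G(n)]_{+}$ evaluated on $\vpr$, not from (\ref{ff_L})--(\ref{ff_G}), which concern the $d,\Psi^{+}$ free-field monomials — cannot vanish when $p\in{\Z}_{<0}$. You should, however, state explicitly that the death of the off-diagonal terms rests on the level-zero hypothesis $C_\a=0$, which makes the modes $\a(n)$ and $\Psi(n+\tfrac{1}{2})$ mutually supercommuting, so that the positive-index modes produced by non-matching commutators pass through the remaining $X$-factors without generating further scalar contributions before annihilating $\vpr$.
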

\begin{proof}
Both claims follow from the def{}inition of ordering (\ref{ordering-1}). If $\deg_\mu<\deg_\l$ we obviously have $Y^{L,G}_\l X^{\a,\Psi}_{-\mu}\vpr=0$. By the brackets in Def{}inition \ref{def} we have for $\l_k\geq\mu_{1}$
$$Y^{L,G}(\l_k)X^{\a,\Psi}_{-\mu}\vpr=\nu_k\partial_{\l_k}X^{\a,\Psi}_{-\mu}\vpr$$
where
$$\nu_k=\begin{cases}
\mu_1(p-\mu_1),&\mu_1\in\N,\\
p-\mu_1-1,&\mu_1\in\tfrac{1}{2}+\N
\end{cases}.$$
From this follows that $Y^{L,G}_\l X^{\a,\Psi}_{-\mu}\vpr=0$, and $Y^{L,G}_\l X^{\a,\Psi}_{-\l}\vpr=\nu\vpr$ where
$$\nu = \prod(p-\mu_i-1)\prod\l_i(p-\l_i).$$
$\nu\neq0$ since $p\in\Z_{<0}$.
\end{proof}
\begin{lemma}  \label{nema-sing-c}
If $p\in\Z_{<0}$ the subspace  $\mathcal C_{p,r} $ of $\mathcal F_{p,r}$ (resp.\ of $V[p,r]$) contains no singular vectors.
\end{lemma}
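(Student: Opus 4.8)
The plan is to show that any singular vector in $\mathcal C_{p,r}$ must vanish by using the "annihilation" result of Lemma~\ref{annih} to extract each coordinate in the monomial basis (\ref{monom-1}). Suppose $v = \sum_{\l \in I} a_\l w_\l \in \mathcal C_{p,r}$ is a nonzero singular vector, where $I \subset \mathcal P(\tfrac12)$ is finite and all $a_\l \neq 0$. Since $v$ has a definite conformal weight and is nonzero, I may assume all $\l \in I$ have the same degree $\deg_\l = n > 0$ (a singular vector of weight zero is a multiple of $\vpr$, which is not in $\mathcal C_{p,r}$ in the relevant graded piece). Among the elements of $I$, pick the one that is minimal with respect to the ordering "$\prec$" restricted to this fixed degree — call it $\mu$.

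Then I would apply the operator $Y^{L,G}_\mu \in \SH^+$ (which lies in the positive part, since $\mu$ is a partition in $\tfrac12\N$ with all parts positive) to $v$. By Lemma~\ref{annih}, for every $\l \in I$ with $\mu < \l$ we get $Y^{L,G}_\mu w_\l = Y^{L,G}_\mu X^{\a,\Psi}_{-\l}\vpr = 0$ (here $\mu < \l$ in the sense of (\ref{ordering-1}): same degree and $\mu \prec \l$, i.e. $\l$ is "$\prec$-larger"), while $Y^{L,G}_\mu w_\mu = \nu \vpr$ with $\nu \neq 0$. The only subtlety is that $\mu$ being $\prec$-minimal in $I$ means every other $\l \in I$ satisfies $\mu \prec \l$, hence $\mu < \l$, so indeed all other terms are killed. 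Therefore $Y^{L,G}_\mu v = a_\mu \nu \vpr \neq 0$. But $v$ is singular, so $\SH^+$ annihilates it, forcing $Y^{L,G}_\mu v = 0$ — a contradiction. Hence $\mathcal C_{p,r}$ contains no singular vectors; the statement for the corresponding subspace of $V[p,r]$ follows since $\mathcal C_{p,r}$ sits inside $V[p,r]$ compatibly with the $\SH$-action (or one runs the identical argument directly in $V[p,r]$, where the monomials $\a_{-\mu}\Psi_{-\l}\vpr$ are linearly independent for $p \in \Z_{<0}$).

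The main obstacle, and the point that needs care, is verifying that $Y^{L,G}_\mu$ genuinely kills \emph{all} the competing basis monomials $w_\l$ and not just those of strictly larger degree: one must check that within a fixed degree $n$, applying the full string $Y^{L,G}(\mu_{\ell})\cdots Y^{L,G}(\mu_1)$ to $X^{\a,\Psi}_{-\l}\vpr$ with $\l$ $\prec$-above $\mu$ produces zero, which relies on the computation in Lemma~\ref{annih} that each $Y^{L,G}(\l_k)$ either lowers by a derivative $\partial_{\l_k}$ with a nonzero scalar (when there is a matching part) or annihilates (when the part count in the remaining monomial runs out). Because $\mu \prec \l$ means $\mu$ and $\l$ agree on the largest parts and then $\mu$ has a strictly \emph{larger} part at the first place of disagreement, the string $Y^{L,G}_\mu$ — which removes parts in decreasing order starting from $\mu_1$ — will at some stage demand a part of size $\mu_r$ from a monomial $X^{\a,\Psi}_{-\l}$ whose remaining parts are all strictly smaller than $\mu_r$, giving zero. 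I would spell out this matching-of-parts bookkeeping carefully, perhaps phrasing it as: $Y^{L,G}_\mu X^{\a,\Psi}_{-\l}\vpr \neq 0$ forces $\mu = \l$, which combined with $\mu$'s minimality is exactly what isolates the coefficient $a_\mu$.
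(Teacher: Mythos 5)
Your proposal is correct and follows essentially the same route as the paper: the paper's proof also takes the $\prec$-minimal partition $\bar\l$ in the support of the putative singular vector, applies the dual string $Y^{L,G}_{\bar\l}$, and uses Lemma \ref{annih} to kill all other terms and produce a nonzero multiple of $\vpr$, contradicting singularity. The extra bookkeeping you flag (that the triangularity of the pairing reduces to the part-matching computation in Lemma \ref{annih}) is exactly what the paper delegates to that lemma.
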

\begin{proof}
Assume that 
$$ u=\sum_{ \l\in \mathcal P(\tfrac{1}{2})}  k_{\l} X^{\a,\Psi}_{-\l} \vpr, $$
is a singular vector of conformal weight $h$ (so $\deg_\l=h$ for each $\l$).
Let $$\bar\l = \min \lbrace\l:k_{\l}\neq0\rbrace$$ with respect to linear ordering (\ref{ordering-1}) on the set of partitions in $\tfrac{1}{2}\Z$ of total degree $h$. Consider $Y^{L,G}_{\bar\l} u$. By Lemma \ref{annih} we have 
\bea
Y^{L,G}_{\bar\l} X^{\a,\Psi}_{-\bar\l}&=&\nu\vpr,\nn
Y^{L,G}_{\bar\l} X^{\a,\Psi}_{-\l}&=&0\text{,\qquad for }\l>\bar\l,\nonumber
\eea
for some $\nu\neq0$. Therefore we have $Y^{L,G}_{\bar\l}u=\nu\vpr\ne0$. This is a contradiction with the assumption that $u$ is a singular vector.
\end{proof}

\begin{proposition} \label{ired-pmanji}
For each non-zero vector $x \in  \mathcal F_{p,r}$, the cyclic $V^{\SH}(c_L,\cla)$-submodule $\langle x \rangle$ contains $v_{p,r}$. In particular, $ L[p,r] = \langle v_{p,r} \rangle$ if $p\in\Z_{<0}$.
\end{proposition}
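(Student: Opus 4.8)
The plan is to show that the cyclic submodule $\langle x\rangle$ contains a nonzero element of the subspace $\mathcal C_{p,r}$, and then to apply Lemma \ref{annih} to this element (exactly as in the proof of Lemma \ref{nema-sing-c}) in order to produce $v_{p,r}$. It is harmless to assume from the start that $x$ is homogeneous for the conformal grading: since $\mathcal F_{p,r}=\bigoplus_{n\in\frac12\Z_{\ge 0}}\mathcal F_{p,r}(h_{p,r}+n)$, with $\mathcal F_{p,r}(h_{p,r}+n)$ the finite-dimensional $(h_{p,r}+n)$-eigenspace of $L(0)$, a standard Vandermonde argument applied to the powers $L(0)^k x$ shows that every homogeneous component of $x$ already lies in $\langle x\rangle$. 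So fix a homogeneous $x\neq 0$.

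The key point is that the \emph{positive} modes of the generators $\alpha=-\cla\,c(-1)$ and $\Psi=-\sqrt 2\,\cla\,\Psi^-(-\tfrac12)$ act on the PBW basis $w_{\l^+,\l^-,\mu^+,\mu^-}$ of (\ref{PBW-2}) by simply deleting one $d$, respectively one $\Psi^+$, with no lower-order corrections. Indeed, using $[c(n),c(-k)]=0=[c(n),\Psi^\pm(r)]$, $[c(n),d(-k)]=2n\delta_{n,k}$ and $c(n)v_{p,r}=0$ for $n>0$, one gets, for $n\ge 1$,
$$\alpha(n)\,w_{\l^+,\l^-,\mu^+,\mu^-}=-2n\cla\,m_n(\mu^+)\,w_{\l^+,\l^-,\mu^+\setminus\{n\},\mu^-},$$
where $m_n(\mu^+)$ is the multiplicity of $n$ in $\mu^+$ (so this vanishes when $n$ is not a part of $\mu^+$); and similarly, for $m\ge 0$,
$$\Psi(m+\tfrac12)\,w_{\l^+,\l^-,\mu^+,\mu^-}=\pm\sqrt 2\,\cla\,w_{\l^+\setminus\{m+\frac12\},\l^-,\mu^+,\mu^-}$$
when $m+\tfrac12$ is a part of $\l^+$, and $0$ otherwise. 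Because distinct basis monomials containing the deleted factor are sent to distinct basis monomials, no cancellation can occur in a linear combination: for homogeneous $y$ one has $\alpha(n)y=0$ iff no monomial of $y$ contains $d(-n)$, and $\Psi(m+\tfrac12)y=0$ iff no monomial of $y$ contains $\Psi^+(-m-\tfrac12)$. Hence, if a homogeneous $y\in\langle x\rangle$ does not already lie in $\mathcal C_{p,r}=\operatorname{span}\{w_{\emptyset,\l^-,\emptyset,\mu^-}\}$, then some $\alpha(n)$ with $n\ge 1$, or some $\Psi(m+\tfrac12)$ with $m\ge 0$, sends it to a nonzero homogeneous vector of strictly smaller conformal weight, again inside $\langle x\rangle$. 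Since the conformal grading of $\mathcal F_{p,r}$ is bounded below, iterating this terminates after finitely many steps in a nonzero homogeneous $y_0\in\mathcal C_{p,r}\cap\langle x\rangle$.

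Writing $y_0=\sum_{\l} k_\l\, X^{\a,\Psi}_{-\l}v_{p,r}$ and taking $\bar\l$ to be the $<$-minimal element of its support, Lemma \ref{annih} gives $Y^{L,G}_{\bar\l}\,y_0=\nu\,v_{p,r}$ with $\nu\neq 0$; thus $v_{p,r}\in\langle x\rangle$, which is the first assertion. For the second, $\langle v_{p,r}\rangle$ is a highest weight module of highest weight $(h_{p,r},(1+p)\cla)$, hence a quotient of $V[p,r]$, and applying what was just proved to an arbitrary nonzero $x\in\langle v_{p,r}\rangle$ shows that $\langle v_{p,r}\rangle$ contains no proper nonzero submodule. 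Therefore $\langle v_{p,r}\rangle$ is the irreducible highest weight module $L[p,r]$.

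I expect the only delicate part to be the bookkeeping in the second paragraph: verifying that $\alpha(n)$ and $\Psi(m+\tfrac12)$ act with no lower-order terms and, above all, with no accidental cancellation on a general linear combination of PBW monomials. (The sign in the formula for $\Psi(m+\tfrac12)$ depends on the position of $m+\tfrac12$ within the super-partition $\l^+$, but since that part occurs there with multiplicity at most one, it causes no trouble.) Granting this computation, the weight-lowering induction and the final appeal to Lemma \ref{annih} are routine, and the identification with $L[p,r]$ is formal.
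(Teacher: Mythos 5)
Your proposal is correct and follows essentially the same route as the paper: positive modes of $\alpha$ and $\Psi$ act as derivatives $\partial_{d(-n)}$ and $\partial_{\Psi^+(-n-1/2)}$ on the Fock basis, which drives any nonzero element down into $\mathcal C_{p,r}$, after which the minimal-partition argument of Lemma \ref{annih} (the content of Lemma \ref{nema-sing-c}) produces $v_{p,r}$. You merely spell out the no-cancellation and homogeneity reductions that the paper leaves implicit.
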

\begin{proof}  
Let $U =\langle x \rangle$. Taking the actions of $$\alpha(n)= -2 n \cla   \partial_{d(-n)} ,  \quad \Psi(n-1/2) = -\sqrt{2}  \cla \partial_{\Psi^+ (-n-1/2)} $$ for $n \in {\Z}_{>0}$, we  see that $U$ must contain some element
$w \in \mathcal C_{p,r}$. Using Lemma \ref{nema-sing-c} we get that $v_{p,r} \in \langle w \rangle$. Applying this result we have that each vector in  $\langle v_{p,r} \rangle$ is cyclic. Hence  $ L[p,r] = \langle v_{p,r} \rangle$.
\end{proof}

\section{Screening operators and singular vectors, case $p>0$}\label{screening}

\subsection{Screening operators} Let $a = \Psi^-(-\frac{1}{2}) e^{\frac{1}{2} c}$. We have
\bea
\tau_0 a &=& \sqrt{2} D e^{\frac{1}{2} c}, \nonumber\\
\tau_1 a &=& \sqrt{2}  e^{\frac{1}{2} c}  \nonumber \\
\tau_n a & = & 0 \qquad (n \ge 2) \nonumber \\
L(0) a &=& a \nonumber \\
L(m) a &=& 0  \qquad (m \ge 1) \nonumber 
\eea
By using the commutator formula, we get that $Q= a_0 = \mbox{Res}_z Y(a,z)$ either commutes or anti-commutes with the generators of  $V^\SH(c_L, \cla)$. Hence $Q$ is a screening operator.

We also have
$$ a_1 \tau = [ \tau_{-1}, a_1] {\bf 1} = (\tau_0 a )_0 {\bf 1} - (\tau_1 a)_{-1} {\bf 1} =  - \sqrt{2}  e^{\frac{1}{2} c}. $$
$$ a_n \tau = 0 \quad \forall n \ge 2. $$

Next we notice that
\bea
\{ a_n, a_m \} = 0, \quad \forall n, m \in {\Z}\nonumber,
\eea
hence $a$ is an odd vector with anti-commuting components. 
In  \cite{AdM-2010}, we studied a construction of a derivation based on the element $a$.

Def{}ine
\begin{eqnarray*}
S &=& \sum_{i =1 } ^{\infty} \frac{1}{i} a_{-i} a_{i} = \mbox{Res}_{z_1}   \mbox{Res}_{z_2}  \mbox{Log} (1 - \frac{z_2}{z_1}) Y(a, z_1) Y (a, z_2).\\
S^{tw} &=& \sum_{i =0 } ^{\infty} \frac{1}{i+1/2} a_{-i-1/2} a_{i+1/2}.
\end{eqnarray*}
We have:
\begin{itemize}
 \item $S$ is an even derivation on $ \overline V = \Ker_{\Pi(0) ^{1/2} \otimes F^{(2)}} Q $. 
 \item On every $\Pi(0) ^{1/2} \otimes F^{(2)}$--module $(M,  Y_M)$   and $ v \in  \overline V$,  we have $$ [ S, Y_M (v,z)] = Y_M (S v, z).$$
  \item On every  $\sigma$--twisted $\Pi(0) ^{1/2} \otimes F^{(2)}$--module $(M^{tw},  Y_{M^{tw} })$   and $ v \in  \overline V$,  we have $$ [S^{tw}, Y_{M^{tw} } (v,z)] = Y_{M^{tw}} (S v, z).$$

\item In particular, we have
$$ [S, L(n)] = [S , \alpha(n)] = [S, \Psi (n+1/2)  ]= 0 \quad \forall n \in {\Z}. $$
$$ [ S , G(n-1/2)] = [S, \tau_n ] = (S \tau )_n = -    \sqrt{2}  (a_{-1} e^{\frac{1}{2} c} ) _n = - \sqrt{2} (  \Psi^-(-1/2) e^c )_n.  $$

Note also that $e^c_0$ is an (even) derivation. We have:
\begin{eqnarray*}
&&[e^c_0, L(n)] = [e^c_0, \alpha(n)] = [e^c_0, \Psi (n+1/2) ] = 0.\\
&&[e^c _0, G(n-1/2)] = (e^ c _0 \tau )_ n = - \sqrt{2} (\Psi^-(-1/2) e^c )_n.
\end{eqnarray*}
\end{itemize}
From these considerations we conclude:
\begin{theorem}\label{screeninzi}
Let  $$ \mathcal G = e^c_0 - S, \ \mathcal G^{tw} = e^c_0- S^{tw}. $$
\begin{itemize}
\item Assume that  $(M, Y_M)$ is a   $\Pi(0) ^{1/2} \otimes F^{(2)}  $--module. Then on the submodule $\overline M = \Ker_M Q$ we have:
$$[\mathcal G, Y_M (v, z) ] = 0, \quad \forall v \in  V^{\SH} (c_L, \cla). $$
\item Assume that  $(M^{tw}, Y_{M^{tw} } )$ is a $\sigma $--twisted $\Pi(0) ^{1/2}  \otimes F^{(2)} $--module. Then
$$[\mathcal G^{tw}, Y_{M^{tw}} (v, z) ] = 0, \quad \forall v \in  V^{\SH} (c_L, \cla). $$
\end{itemize}
Therefore $\mathcal G$ and $\mathcal G^{tw}$ are screening operators.
\end{theorem}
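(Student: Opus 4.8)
The plan is to reduce the theorem to two ingredients already at our disposal: $V^\SH(c_L,\cla)$ is generated as a vertex algebra by $\alpha,\Psi,\tau,\omega$ (Theorem \ref{simplicity-voa}), and on any module $e^c_0$ together with $S$ (resp.\ with $S^{tw}$) implements a derivation of the ambient algebra $\Pi(0)^{1/2}\otimes F^{(2)}$ that annihilates these four vectors. First I would record that $V^\SH(c_L,\cla)\subseteq\overline V:=\Ker_{\Pi(0)^{1/2}\otimes F^{(2)}}Q$. Indeed $Q{\bf 1}=a_0{\bf 1}=0$ by the creation property, each generator $u\in\{\alpha,\Psi,\tau,\omega\}$ satisfies $Qu=0$ (this is the content of the assertion that $Q$ commutes or anticommutes with the generators), and since $[Q,Y(u,z)]_\pm=Y(Qu,z)$, the monomials $u^1_{n_1}\cdots u^k_{n_k}{\bf 1}$ spanning $V^\SH(c_L,\cla)$ are annihilated by $Q$ by induction on $k$.

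Next I would check that $\mathcal G=e^c_0-S$ restricts to a derivation of the vertex algebra $\overline V$. As a zero mode, $e^c_0$ is a derivation of all of $\Pi(0)^{1/2}\otimes F^{(2)}$, and $S$ is a derivation of $\overline V$ as recalled above, so $\mathcal G$ is a derivation of $\overline V$ as soon as it preserves $\overline V$, i.e.\ as soon as $[\mathcal G,Q]=0$. Here $[S,Q]=0$ because all the modes $a_i$ anticommute, and $[e^c_0,Q]=[e^c_0,a_0]=0$ because $a_0e^c=0$: the series $Y(a,z)e^c$ has no $z^{-1}$ coefficient, since $\langle\tfrac{c}{2},c\rangle=0$ and $Y(\Psi^-(-\tfrac{1}{2}){\bf 1},z){\bf 1}$ involves only nonnegative powers of $z$. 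Consequently $\{v\in\overline V:\mathcal G v=0\}$ is a vertex subalgebra of $\overline V$ (it contains ${\bf 1}$, because $\mathcal G{\bf 1}=0$, and it is closed under all $n$-th products), so it suffices to show that it contains $\alpha,\Psi,\tau,\omega$.

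For $\alpha$, $\Psi=\Psi(-\tfrac{1}{2}){\bf 1}$ and $\omega=L(-2){\bf 1}$ this is immediate from the displayed relations, since $e^c_0$ and $S$ both commute with every $\alpha(n)$, $\Psi(n+\tfrac{1}{2})$ and $L(n)$; hence $\mathcal G\alpha=\mathcal G\Psi=\mathcal G\omega=0$. For $\tau=G(-\tfrac{3}{2}){\bf 1}$ the two summands of $\mathcal G$ do not vanish separately but cancel: $[e^c_0,G(n-\tfrac{1}{2})]=(e^c_0\tau)_n=-\sqrt2\,(\Psi^-(-\tfrac{1}{2})e^c)_n$ while $[S,G(n-\tfrac{1}{2})]=(S\tau)_n=-\sqrt2\,(\Psi^-(-\tfrac{1}{2})e^c)_n$, so $\mathcal G\tau=(e^c_0-S)\tau=0$. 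Therefore $\mathcal G v=0$ for every $v\in V^\SH(c_L,\cla)$.

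Finally, for a $\Pi(0)^{1/2}\otimes F^{(2)}$-module $(M,Y_M)$ and $v\in V^\SH(c_L,\cla)\subseteq\overline V$ we have $[e^c_0,Y_M(v,z)]=Y_M(e^c_0v,z)$ (zero mode) and $[S,Y_M(v,z)]=Y_M(Sv,z)$ (the recalled module identity), hence $[\mathcal G,Y_M(v,z)]=Y_M(\mathcal G v,z)=0$; since $[\mathcal G,Q]=0$ and $Qv=0$, both $\mathcal G$ and $Y_M(v,z)$ preserve $\overline M=\Ker_MQ$, which yields the first assertion. The second assertion is obtained verbatim on a $\sigma$-twisted module, using $[S^{tw},Y_{M^{tw}}(v,z)]=Y_{M^{tw}}(Sv,z)$ in place of the identity for $S$, and noting that $e^c_0$ still acts on $M^{tw}$ with $[e^c_0,Y_{M^{tw}}(v,z)]=Y_{M^{tw}}(e^c_0v,z)$ because $e^c$ is even for the fermion parity defining $\sigma$. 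I expect no genuine obstacle here: the only input not already displayed is the vanishing $a_0e^c=0$ used for $[\mathcal G,Q]=0$, and the real content, the cancellation $e^c_0\tau=S\tau$, is precisely what the preceding computations were arranged to produce.
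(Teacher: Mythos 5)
Your proof is correct and follows essentially the same route as the paper: both rest on the facts that $e^c_0$ and $S$ (resp.\ $S^{tw}$) are derivations commuting with the modes of $\alpha$, $\Psi$, $\omega$, and on the key cancellation $[e^c_0,G(n-\tfrac12)]=[S,G(n-\tfrac12)]=-\sqrt2\,(\Psi^-(-\tfrac12)e^c)_n$, the paper simply concluding "from these considerations" while you assemble them via the kernel of the derivation being a vertex subalgebra. Your write-up additionally makes explicit two points the paper leaves implicit, namely $V^\SH(c_L,\cla)\subseteq\Ker Q$ and $[\mathcal G,Q]=0$ (so that $\mathcal G$ preserves $\overline M$), which is a welcome clarification rather than a different argument.
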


Def{}ine the Schur polynomial
$S_r(\alpha):= S_r(\alpha(-1), \alpha(-2), \cdots)$ using the generating function
$$ \exp \left(   \sum_{n=1} ^{\infty}\alpha(-n) \frac{z^n}{n}   \right) = \sum_{r=0} ^{\infty} S_r( \alpha) z^r. $$
In particular we have
$$ S_0(\alpha) = 1, \ S_1 (\alpha) = \alpha(-1), \ S_2 (\alpha)= \frac{1}{2} \left( \alpha(-1) ^2 + \alpha(-2) \right) $$
and
\begin{equation}\label{an}
a_n v_{p,r-\tfrac{1}{2}} = -\frac{1}{\sqrt 2 \cla}\sum_{i=0}^{\tfrac{p-1}{2}-n}\Psi(-i-\tfrac{1}{2}) S_{\tfrac{p-1}{2}-n-i}\left(-\tfrac{\a}{2\cla}\right)\vpr.
\end{equation}

\subsection{(Sub)singular vectors for $p \in \N$, $p$ odd}

\begin{theorem}\label{podd}
Assume that $p\in\N$ is odd. Then
\be\label{sing_nep}
u_{p,r}=\sum_{i=0}^{\frac{p-1}{2}}\Psi(-i-\tfrac{1}{2}) S_{\frac{p-1}{2}-i}\left(-\tfrac{\a}{2\cla}\right) \vpr
\ee
is a singular vector (of weight $h_{p,r}+p/2=h_{p,r-1/2}$) in $V[p,r] = \mathcal F_{p,r}$.
\end{theorem}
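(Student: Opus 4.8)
The plan is to recognise $u_{p,r}$, up to a nonzero scalar, as $Q\,v_{p,r-\tfrac12}$ with $Q=a_0$, and then to deduce singularity from the screening property of $Q$ together with the fact that $v_{p,r-\tfrac12}$ is a highest weight vector. First I would observe that, since $p\in\N\subset\C\setminus\Z_{<0}$, Proposition~\ref{realizacija_Verma} applies to the pair $(p,r-\tfrac12)$ as well, so $v_{p,r-\tfrac12}$ is the highest weight vector of $\mathcal F_{p,r-\frac12}\cong V[p,r-\tfrac12]$ of highest weight $(h_{p,r-1/2},(1+p)\cla)$; in particular it is annihilated by $L(n),\a(n),G(n-\tfrac12),\Psi(n-\tfrac12)$ for all $n\ge1$. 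Evaluating formula~(\ref{an}) at $n=0$ gives
\[
a_0 v_{p,r-\tfrac12}=-\tfrac{1}{\sqrt2\,\cla}\,u_{p,r},\qquad\text{i.e.}\qquad u_{p,r}=-\sqrt2\,\cla\,Q\,v_{p,r-\tfrac12}.
\]
Since $a=\Psi^-(-\tfrac12)e^{c/2}$ carries the factor $e^{c/2}$, which shifts the lattice momentum of $v_{p,r-\tfrac12}=e^{-\frac{p+1}{2}\overline d+(r-\frac12)c}$ by $\tfrac12 c$, the vector $Q v_{p,r-\tfrac12}$ sits in the Fock module over $M(1,-\tfrac{p+1}{2}\overline d+rc)$, hence in $\mathcal F_{p,r}\cong V[p,r]$; and as $a_0$ preserves conformal weight (because $L(0)a=a$), it has conformal weight $h_{p,r-1/2}=h_{p,r}+p/2$, matching the stated weight.

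Next I would use that $Q=a_0$ is a screening operator for $V^\SH(c_L,\cla)$, as established in Section~\ref{screening}: it super-commutes with the generating fields, hence with all their modes, commuting with the even modes $L(n),\a(n)$ and anticommuting with the odd modes $G(n-\tfrac12),\Psi(n-\tfrac12)$. Then for $n\ge1$ one moves $Q$ past the annihilation operator and uses that it kills $v_{p,r-\tfrac12}$: $L(n)u_{p,r}=-\sqrt2\,\cla\,Q\,L(n)v_{p,r-\tfrac12}=0$ and likewise $\a(n)u_{p,r}=0$, while $G(n-\tfrac12)u_{p,r}=+\sqrt2\,\cla\,Q\,G(n-\tfrac12)v_{p,r-\tfrac12}=0$ (the sign coming from the anticommutator) and likewise $\Psi(n-\tfrac12)u_{p,r}=0$. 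The same argument with $n=0$ gives $L(0)u_{p,r}=h_{p,r-1/2}u_{p,r}$ and $\a(0)u_{p,r}=(1+p)\cla\,u_{p,r}$, so $u_{p,r}$ is annihilated by $\SH^+$ and is a joint $\SH^0$-eigenvector.

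It then remains to check $u_{p,r}\ne0$. In the defining sum the summand $i=\tfrac{p-1}{2}$ equals $\Psi(-p/2)S_0(-\tfrac{\a}{2\cla})v_{p,r}=\Psi(-p/2)v_{p,r}$, which (since $p/2\in\tfrac12+\Zp$) is a nonzero PBW basis monomial of the Verma module $V[p,r]$, whereas every other summand $\Psi(-i-\tfrac12)S_{\frac{p-1}{2}-i}(-\tfrac{\a}{2\cla})v_{p,r}$ with $i<\tfrac{p-1}{2}$ is a linear combination of PBW monomials each carrying at least one $\a$-mode; hence the coefficient of $\Psi(-p/2)v_{p,r}$ in $u_{p,r}$ equals $1$ and no cancellation can occur. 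Combining the three steps, $u_{p,r}$ is a nonzero singular vector of weight $h_{p,r}+p/2=h_{p,r-1/2}$ in $V[p,r]=\mathcal F_{p,r}$.

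I expect the proof to be essentially bookkeeping once formula~(\ref{an}) and the screening property of $Q$ are in hand; the points deserving care are the parity signs when commuting $Q$ through the odd modes, the verification that $Q v_{p,r-\tfrac12}$ really lands in $\mathcal F_{p,r}$ (i.e.\ that the momentum shift supplied by $e^{c/2}$ is the correct one), and the non-vanishing argument. If one preferred to bypass~(\ref{an}), one would instead have to compute $Q v_{p,r-\tfrac12}$ directly from $a=\Psi^-(-\tfrac12)e^{c/2}$ and the action of $\Psi(z)$ and $\a(z)$ on $\mathcal F_{p,r}$, which is precisely the derivation of~(\ref{an}); so there is no genuine shortcut around that identification.
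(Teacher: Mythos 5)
Your proposal is correct and follows the same route as the paper: identify $u_{p,r}$ (up to the scalar $-\sqrt{2}\,\cla$) with $Q v_{p,r-\frac{1}{2}}$ via formula (\ref{an}), and deduce singularity from the fact that $Q=a_0$ (anti)commutes with the action of $V^\SH(c_L,\cla)$ while $v_{p,r-\frac{1}{2}}$ is a highest weight vector. The extra care you take with the momentum shift supplied by $e^{c/2}$ and with the non-vanishing of the leading term $\Psi(-p/2)v_{p,r}$ is welcome detail that the paper leaves implicit.
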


\begin{proof}
We know from Proposition \ref{realizacija_Verma} that $V^\SH(c_L, \cla).\vpr\cong V[p,r]$. 
By applying the screening operator $Q$ on $v_{p,r - \tfrac{1}{2}} $ we get
$$Q v_{p,r - \frac{1}{2}} = a_0 v_{p,r - \frac{1}{2}}.$$
Since $v_{r - \frac{1}{2}}$ is a highest weight vector, and $Q$ (anti)commutes with the action of $V^\SH(c_L, \cla)$, it is clear that $a_0 v_{p,r - \frac{1}{2}}$ is a singular vector in $V[p,r]$. The formula follows from (\ref{an}).
\end{proof}

\begin{remark} One can show using results from \cite{AdP-2019} or \cite{CGN} that
$\Ker_{\mathcal F_{-1,0}} Q $ is isomorphic to the simple affine vertex superalgebra $L_1(\mathfrak{gl}(1\vert 1))$ associated to the Lie superalgebra $\mathfrak{gl}(1\vert 1)$. We won't use this identif{}ication in the current paper. \end{remark}

Now we show that  there exist subsingular vectors in $V[p,r]$ for $p>0$ odd.

$$\mathcal M_{p,r}:=(\Pi(0) ^{1/2} \otimes F^{(2)}). \vpr$$ is an (untwisted) $\Pi(0) ^{1/2} \otimes F^{(2)}$--module. Let
$$\overline{ \mathcal M_{p,r} } = \Ker_{\mathcal M_{p,r}} Q. $$
Then $\mathcal G$ is a screening operator on $\overline{ \mathcal M_{p,r} }$.

\begin{proposition}\label{screening2}
Let $p \in \N$ be odd. Then
\begin{multline}\label{subsing}
w_{p,r} = \Bigg(S_{p}\left(-\tfrac{\a}{\cla}\right)+ \\
\sum_{k=1}^\frac{p-1}{2} \frac{1}{k}\bigg(\sum_{i\geq0}\Psi(i+k-\tfrac{p}{2})S_i(-\tfrac{\a}{2\cla})\bigg)\bigg(\sum_{j\geq0}\Psi(j-k-\tfrac{p}{2})S_j(-\tfrac{\a}{2\cla})\bigg)\Bigg)\vpr
\end{multline}
is a non-trivial subsingular vector (of weight $h_{p,r}+p=h_{p,r-1}$) in $V[p,r] =\mathcal F_{p,r}$.
\end{proposition}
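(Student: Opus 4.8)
The plan is to verify directly that $w_{p,r}$ is annihilated by $L(n)$, $\alpha(n)$, $\Psi(n+\tfrac12)$, $G(n+\tfrac12)$ for $n>0$ \emph{modulo} the singular vector submodule generated by $u_{p,r}$, and that $w_{p,r}$ itself does not lie in that submodule. The key organising principle is that $w_{p,r}$ is obtained by applying the screening operator $\mathcal G = e^c_0 - S$ to $v_{p,r}$ (that is, one should first establish $w_{p,r} = \mathcal G v_{p,r}$, possibly up to a nonzero scalar, by matching the generating-function expressions: $e^c_0 v_{p,r}$ produces the Schur-polynomial term $S_p(-\tfrac{\alpha}{\cla})\vpr$ after using $e^c_0 e^{\frac c2 + \cdots} = $ a shift in $d$-momentum combined with the vertex-operator expansion, while $-S v_{p,r} = -\sum_i \tfrac1i a_{-i}a_i v_{p,r}$ produces the bilinear double-sum via \eqref{an}). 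Since $v_{p,r} \in \overline{\mathcal M_{p,r}} = \Ker Q$ — because $a_n v_{p,r} = a_n v_{(p), (r-\tfrac12)+\tfrac12}$ has nonnegative $c$-charge only in the relevant range and $a_0 v_{p,r}$ vanishes for the appropriate parity, or more simply because $Q v_{p,r}$ would be a singular vector of the wrong weight — Theorem \ref{screeninzi} gives $[\mathcal G, Y_{\mathcal M_{p,r}}(v,z)] = 0$ for all $v \in V^\SH(c_L,\cla)$ on $\overline{\mathcal M_{p,r}}$. Hence $\mathcal G$ maps singular/annihilated vectors to annihilated vectors, and in particular, for $n>0$, the action of any positive mode $x_n$ of a generator on $w_{p,r} = \mathcal G v_{p,r}$ equals $\mathcal G(x_n v_{p,r}) = 0$. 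This is the conceptual heart of why $w_{p,r}$ is (sub)singular.

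The remaining work is to explain why $w_{p,r}$ is only \emph{sub}singular rather than singular, and why it is nonzero. For nonvanishing: the leading term $S_p(-\tfrac{\alpha}{\cla})\vpr = \tfrac1{p!}\alpha(-1)^p\vpr + \cdots$ is a nonzero element of the Fock space $\mathcal F_{p,r}$ of conformal weight $h_{p,r}+p$, and the bilinear corrections involve $\Psi$-modes, so no cancellation is possible — one can read this off the PBW-type basis \eqref{PBW-2}. For subsingularity: although $\mathcal G v_{p,r}$ is annihilated by all $\SH^+$ after passing to $\overline{\mathcal M_{p,r}}$, inside the Verma module $V[p,r] = \mathcal F_{p,r}$ itself the vector $w_{p,r}$ need not be a highest-weight vector on the nose; rather, one shows that its image in the quotient $V[p,r]/\langle u_{p,r}\rangle$ is singular. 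Concretely, $u_{p,r} = Q v_{p,r-1/2}$ (Theorem \ref{podd}), and one checks that $x_n w_{p,r}$ for $x_n \in \SH^+$, while not literally zero in $\mathcal F_{p,r}$, lies in the submodule generated by $u_{p,r}$. The cleanest route is: $\mathcal G$ commutes with $\SH$ on $\Ker_{\mathcal M_{p,r}} Q$, but $w_{p,r}$ regarded in $\mathcal F_{p,r} = \Pi(0)\otimes F^{(2)} . v_{p,r}$ (which sits inside $\mathcal M_{p,r}$) may pick up an extra term in the component of $\mathcal M_{p,r}$ with half-integral $c$-charge; this extra term, when hit by $\SH^+$, produces precisely a multiple of $u_{p,r}$-generated vectors. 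So one computes $x_n w_{p,r}$ explicitly for a spanning set of $\SH^+$ (it suffices to take $\alpha(1)$, $\Psi(1/2)$, $L(1)$, $G(1/2)$, $G(3/2)$, using the commutation relations to reduce everything else) and identifies the result with an element of $\langle u_{p,r}\rangle$.

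The main obstacle I anticipate is the last step: controlling $x_n w_{p,r}$ precisely enough to recognise it inside $\langle u_{p,r}\rangle$ rather than just bounding its weight. This requires careful bookkeeping with the Schur polynomials — using identities like $\sum_i \alpha(-n) \tfrac{z^n}{n}$-generating-function manipulations and the recursion $x \cdot S_r(\alpha) = $ (lower Schur polynomials), together with the explicit formula \eqref{an} for $a_n v_{p,r-1/2}$ — and a clear hold on how the double sum over $\Psi$-modes telescopes when a single $G$- or $\Psi$-mode is applied. I would first do the case of small odd $p$ (say $p=1$, where $w_{1,r} = S_1(-\tfrac{\alpha}{\cla})\vpr = -\tfrac1{\cla}\alpha(-1)\vpr$, and $p=3$) to pin down the pattern, then phrase the general argument via the screening-operator intertwining property, relegating the Schur-polynomial identities to a lemma. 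An alternative, perhaps slicker, route avoiding explicit computation entirely: combine the character bound $\ch L[p,r] \le q^{h_{p,r}}(1-q^{p/2})\prod(1+q^{k-1/2})^2/(1-q^k)^2$ (for $p$ odd) with the known singular vector $u_{p,r}$ at weight $h_{p,r}+p/2$; the quotient $V[p,r]/\langle u_{p,r}\rangle$ then has a character with a "gap", and a weight count at level $h_{p,r}+p$ forces the existence of a singular vector there — but this only gives existence, not the explicit formula \eqref{subsing}, so the screening-operator construction is still needed to prove it is \emph{this} vector.
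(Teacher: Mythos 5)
There is a genuine gap at the step you call the ``conceptual heart''. You assert that the highest weight vector to which $\mathcal G$ is applied lies in $\overline{\mathcal M_{p,r}}=\Ker_{\mathcal M_{p,r}} Q$, and conclude that $x_n w_{p,r}=\mathcal G(x_n v_{p,r})=0$ for every positive mode $x_n$. For $p>0$ odd this is false: by Theorem \ref{podd} and formula (\ref{an}), $Qv_{p,s}=a_0v_{p,s}$ is a \emph{non-zero} singular vector for every $s$ (it is $u_{p,s+1/2}$, of weight $h_{p,s}+p/2$, which is a perfectly admissible weight, so the ``wrong weight'' justification does not apply either). Hence no $v_{p,s}$ lies in $\Ker Q$, Theorem \ref{screeninzi} does not allow $\mathcal G$ to commute past the $V^{\SH}$-action on it, and indeed your conclusion cannot hold, since it would make $w_{p,r}$ an honest singular vector rather than a subsingular one (the paper stresses $Qw_{p,r}\neq0$ exactly to rule this out). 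The mechanism the paper uses, and which your proposal misses, rests on $Q^2=0$ and $[Q,\mathcal G]=0$: the vector $Qv_{p,r-1}$ \emph{does} lie in $\Ker Q$ and is singular, so $Qw_{p,r}=\mathcal G Qv_{p,r-1}$ is a non-zero singular vector, and for any positive mode $x_n$ one gets $Q(x_nw_{p,r})=\pm x_nQw_{p,r}=\pm\mathcal G(x_nQv_{p,r-1})=0$, whence $x_nw_{p,r}\in\Ker Q=\overline{\mathcal M_{p,r}}$. That is precisely subsingularity modulo $\overline{\mathcal M_{p,r}}$, with no Schur-polynomial bookkeeping required. Your fallback plan --- computing $x_nw_{p,r}$ explicitly and recognising it inside $\langle u_{p,r}\rangle$ --- is the step you yourself flag as the main obstacle, and it is not carried out, so the proposal does not close.

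A secondary but non-negligible slip: the vector in (\ref{subsing}) is $\mathcal G v_{p,r-1}$, not $\mathcal G v_{p,r}$. Since $e^c_0$ raises the $c$-momentum by $c$ and preserves conformal weight, $\mathcal G v_{p,r}$ lands in $\mathcal F_{p,r+1}$ at weight $h_{p,r}$, whereas $w_{p,r}\in\mathcal F_{p,r}$ has weight $h_{p,r-1}=h_{p,r}+p$; only with the shifted index does the generating-function match work out, namely $e^c_0v_{p,r-1}=S_p\left(-\tfrac{\a}{\cla}\right)\vpr$ and the bilinear double sum arising from $-Sv_{p,r-1}$ via (\ref{an}). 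Your alternative character-counting route is correctly self-diagnosed: it could only give existence of a (sub)singular vector at that weight, not the explicit formula, so it cannot replace the screening-operator argument.
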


\begin{proof}Note the following facts:

\begin{itemize}
\item $[Q,  \mathcal G] = 0$, $ Q^2 = 0$.

\item $v_{p,r-1}$ is a singular vector for every $r \in {\C}$.

\item $ Q v_{p,r-1} $ is a singular vector in $\overline{ \mathcal M_{p,r} }$ for every $r \in {\C}$.

\item $  \mathcal G Q v_{p,r-1} $ is a singular vector for every $r \in {\C}$. It is non-trivial since it has the form 
$$ (\Psi^- (-1/2) e^{c/2} )_0 e^c_0 v_{p,r-1} + \cdots   $$
where $\cdots$ denotes the sum of monomials  containing the product of three  fermionic generators  "$\Psi^- (j_1) \Psi^- (j_2) \Psi^- (j_3)$", 
and  $$ (\Psi^- (-1/2) e^{c/2} )_0 e^c_0  v_{p,r-1}  \ne 0. $$

\end{itemize}

Let $w_{p,r} =  \mathcal G v_{p,r-1} $. The arguments above show that  $Q w_{p,r} \ne 0$, but for $n \ge 1$ it holds that
$$ L(n) w_{p,r}, \: \alpha(n) w_{p,r} , \: \Psi\left(n-\tfrac{1}{2}\right) w_{p,r}, \: G\left(n-\tfrac{1}{2}\right) w_{p,r} \in  \overline{ \mathcal M_{p,r} }. $$ 
Hence $w_{p,r}$ is a singular vector in $ \mathcal M_{p,r}  /  \overline{ \mathcal M_{p,r} }$, and therefore subsingular in $ \mathcal M_{p,r}$.
The proof follows.
\end{proof}

\begin{theorem} \label{sing-p-odd-1}
Assume that $p>0$ odd. Then we have the following family of (sub)singular vectors in $V[p,r] = \mathcal F_{p,r}$:
\begin{itemize}
\item Singular vector $u_{p, r} ^{(n)}  = \mathcal G ^n   Q v_{p, r-n-1/2}$, $n \in \Zp$.
\item Subsingular vectors $w_{p, r} ^{(n)}  = \mathcal G ^n v_{p, r-n}$, $n \in \N$.
\end{itemize}
\end{theorem}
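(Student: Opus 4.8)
The plan is to prove both families by induction on $n$, taking as inputs the case $n=0$ of the singular family (Theorem \ref{podd}) and the case $n=1$ of the subsingular family (Proposition \ref{screening2}), and working throughout inside the $\Pi(0)^{1/2}\otimes F^{(2)}$--module $\mathcal M_{p,r}=(\Pi(0)^{1/2}\otimes F^{(2)}).\vpr$, which contains $\mathcal F_{p,\rho}$ for every $\rho\in r+\tfrac12\Z$. The structural facts I would use are $Q^2=0$, $[\mathcal G,Q]=0$, and the key property from Theorem \ref{screeninzi} that $\mathcal G$ commutes with the whole action of $V^\SH(c_L,\cla)$ on $\overline{\mathcal M_{p,r}}=\Ker_{\mathcal M_{p,r}}Q$. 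A $c$--charge count shows that $e^c_0$, hence $\mathcal G$, raises the $c$--momentum by $c$ while $Q=a_0$ raises it by $\tfrac c2$, so that $u^{(n)}_{p,r}$ and $w^{(n)}_{p,r}$ indeed land in $\mathcal F_{p,r}=V[p,r]$; since $\mathcal G$ and $Q$ preserve $L(0)$--weight, $u^{(n)}_{p,r}$ has weight $h_{p,r-n-1/2}$ and $w^{(n)}_{p,r}$ has weight $h_{p,r-n}$ (in agreement, via $h_{p,\rho}+p=h_{p,\rho-1}$, with the $n=0,1$ cases).

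For the singular family, set $y_0=Q\,v_{p,r-n-1/2}$. By Theorem \ref{podd} (with highest weight parameter $r-n$ in place of $r$) this is a singular vector in $\mathcal F_{p,r-n}$, and $Q y_0=Q^2 v_{p,r-n-1/2}=0$, so $y_0\in\overline{\mathcal M_{p,r}}$. Put $y_j=\mathcal G^j y_0$. Since $[\mathcal G,Q]=0$, every $y_j$ stays in $\overline{\mathcal M_{p,r}}$; and since $\mathcal G$ commutes there with each mode $x\in\SH^+$, an immediate induction gives $x\,y_j=\mathcal G(x\,y_{j-1})=0$. Hence $u^{(n)}_{p,r}=y_n$ is annihilated by $\SH^+$, and it remains only to check that it is nonzero.

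The non-vanishing is the crux, and I expect it to be the main obstacle. I would filter $\mathcal M_{p,r}$ by the number of fermionic modes $\Psi^-$ occurring; since no $\Psi^+$ occurs in any of the vectors in play, this is a genuine $\Z_{\ge0}$--grading. The operator $e^c_0$ involves no fermionic modes and so has fermionic degree $0$, whereas $S=\sum_{i\ge1}\tfrac1i a_{-i}a_i$ raises fermionic degree by $2$; as $Q\,v_{p,r-n-1/2}$ has fermionic degree exactly $1$, the component of $u^{(n)}_{p,r}=(e^c_0-S)^n Q\,v_{p,r-n-1/2}$ of lowest fermionic degree is precisely $(e^c_0)^n Q\,v_{p,r-n-1/2}$. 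Using $[e^c_0,c(m)]=[a_0,c(m)]=0$ and $e^c_0\,v_{p,\rho}=\varepsilon_\rho\,S_p\!\left(-\tfrac{\a}{\cla}\right)v_{p,\rho+1}$ (a nonzero Schur--polynomial action, since $\langle c,d\rangle=2$ forces the exponent $-(p+1)$ and hence the degree-$p$ Schur polynomial with $p\ge1$), this component equals, up to a nonzero scalar, $S_p\!\left(-\tfrac{\a}{\cla}\right)^{n}Q\,v_{p,r-1/2}=S_p\!\left(-\tfrac{\a}{\cla}\right)^{n}u_{p,r}$, which is nonzero because $u_{p,r}\ne0$ (Theorem \ref{podd}) and the Fock space is free over the creation operators. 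Thus $u^{(n)}_{p,r}\ne0$, finishing the singular family.

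For the subsingular family I would mimic the proof of Proposition \ref{screening2}. First, $[\mathcal G,Q]=0$ and Theorem \ref{podd} give $Q\,w^{(n)}_{p,r}=\mathcal G^n Q\,v_{p,r-n}=\mathcal G^n u_{p,r-n+1/2}=u^{(n)}_{p,r+1/2}$, which is nonzero by the previous paragraph; in particular $w^{(n)}_{p,r}\ne0$ and $w^{(n)}_{p,r}\notin\overline{\mathcal M_{p,r}}$. Second, for $x\in\SH^+$, since $Q$ (anti)commutes with the $V^\SH(c_L,\cla)$--action and $u^{(n)}_{p,r+1/2}$ is singular, $Q\bigl(x\,w^{(n)}_{p,r}\bigr)=\pm\,x\,Q\,w^{(n)}_{p,r}=\pm\,x\,u^{(n)}_{p,r+1/2}=0$, so $x\,w^{(n)}_{p,r}\in\Ker_{\mathcal F_{p,r}}Q$. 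Since $\Ker_{\mathcal F_{p,r}}Q$ is a proper submodule of $V[p,r]=\mathcal F_{p,r}$ (as $Q\,v_{p,r}=u_{p,r+1/2}\ne0$), the image of $w^{(n)}_{p,r}$ in $V[p,r]/\Ker_{\mathcal F_{p,r}}Q$ is a nonzero singular vector; that is, $w^{(n)}_{p,r}$ is a non-trivial subsingular vector in $V[p,r]$. Everything apart from the non-vanishing in the third paragraph is formal manipulation with the (anti)commutation relations.
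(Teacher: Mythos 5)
Your proof is correct and follows exactly the route the paper intends: the theorem is stated there without its own proof, as the iteration of Theorem \ref{podd}, Theorem \ref{screeninzi} and Proposition \ref{screening2}, and your induction together with the fermionic-degree filtration for non-vanishing (lowest component $(e^c_0)^n Q v = S_p(-\tfrac{\a}{\cla})^n u_{p,r}$) is precisely the generalisation of the paper's $n=1$ argument, where the error terms are described as monomials with three fermionic generators. No gaps; you have merely written out details the paper leaves implicit.
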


Note that
\bea\label{gen}
G\left(\tfrac{p}{2}\right)w_{p,r}^{(1)} &=& [G\left(\tfrac{p}{2}\right),\mathcal G] v_{p,r-1} = \left(\left[G\left(\tfrac{p}{2}\right),e^c_0\right]-\left[G\left(\tfrac{p}{2}\right),S\right]\right) v_{p,r-1} =\nn
	&=& \sum_{i=0}^{\frac{p-1}{2}}\frac{\Psi\left(-i-\tfrac{1}{2}\right)}{\cla} S_{\frac{p-1}{2}-i}(c)\vpr = u_{p,r}^{(0)}.
\eea
From (\ref{subsing}) and (\ref{gen}) it follows that all (sub)singular vectors in Theorem \ref{sing-p-odd-1} belong to a submodule $\<w_{p,r}^{(1)}\>$.

\subsection{Singular vector for $p \in\N$, $p$ even}\label{sing_formula}

Let $p \in 2\N$. As in \cite{A-2019}, we can construct twisted $\Pi(0) ^{1/2}$--modules.

Note that $ \sigma = e^{  \pi i d(0)}$ is an automorphism of the vertex operator algebra $\Pi(0) ^{1/2}$ of order two,  and $\Pi(0) ^{1/2} . e^{-\frac{p+1}{2} \overline d + r c}$, $r \in {\C}$, is a $\sigma$--twisted $\Pi(0) ^{1/2}$--module.
\begin{theorem}\label{singeven}
Assume that $p\in\N$ is even. Then 
\begin{multline}\label{sing_par}
u_{p,r}=\Bigg(S_{p}\left(-\tfrac{\a}{\cla}\right)+ \\
\sum_{k=0}^\frac{p-1}{2} \frac{1}{k+\tfrac{1}{2}}\bigg(\sum_{i\geq0}\Psi(i+k-\tfrac{p-1}{2})S_i(-\tfrac{\a}{2\cla})\bigg)\bigg(\sum_{j\geq0}\Psi(j-k-\tfrac{p+1}{2})S_j(-\tfrac{\a}{2\cla})\bigg)\Bigg)\vpr
\end{multline}
is a singular vector (of weight $h_{p,r}+p=h_{p,r-1}$) in $V[p,r]=\mathcal F_{p,r}$.
\end{theorem}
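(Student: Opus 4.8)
The plan is to realise $u_{p,r}$ as $\mathcal G^{tw}v_{p,r-1}$ and then to evaluate that expression. Since $p>0$, Proposition~\ref{realizacija_Verma} identifies $\mathcal F_{p,r-1}\cong V[p,r-1]$ and $\mathcal F_{p,r}\cong V[p,r]$ as $V^\SH(c_L,\cla)$-modules, with highest weight vectors $v_{p,r-1}$ and $v_{p,r}$. Both Fock spaces occur as distinct $d(0)$-eigenspaces inside the $\sigma$-twisted $\Pi(0)^{1/2}\otimes F^{(2)}$-module $\Pi(0)^{1/2}.e^{-\frac{p+1}{2}\overline d+rc}\otimes F^{(2)}$ considered in Section~\ref{sing_formula} (with $F^{(2)}$ untwisted). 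Now $e^c$ has $d(0)$-degree $2$ and $a=\Psi^-(-\tfrac12)e^{c/2}$ has $d(0)$-degree $1$, so both $e^c_0$ and $S^{tw}=\sum_{i\ge0}\tfrac{1}{i+1/2}a_{-i-1/2}a_{i+1/2}$ raise the $d(0)$-eigenvalue by $2$; hence $\mathcal G^{tw}=e^c_0-S^{tw}$ maps $\mathcal F_{p,r-1}$ into $\mathcal F_{p,r}$. This is where the parity of $p$ enters: the modes $a_m$ acting on $v_{p,r-1}$ carry index $m\in-\tfrac{p+1}{2}+\Z$, which equals $\tfrac12+\Z$ precisely when $p$ is even, matching the index range of $S^{tw}$; and by the evident analogue of (\ref{an}) one has $a_{i+1/2}v_{p,r-1}=0$ for $i\ge p/2$, so $S^{tw}v_{p,r-1}$ is a finite sum and $\mathcal G^{tw}v_{p,r-1}$ is a well-defined vector of conformal weight $h_{p,r-1}=h_{p,r}+p$.

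By Theorem~\ref{screeninzi}, $\mathcal G^{tw}$ commutes with the action of every $v\in V^\SH(c_L,\cla)$ on $\sigma$-twisted modules, so its restriction $\mathcal F_{p,r-1}\to\mathcal F_{p,r}$ is a homomorphism $V[p,r-1]\to V[p,r]$ of $V^\SH(c_L,\cla)$-modules, and it therefore sends the highest weight vector $v_{p,r-1}$ to a singular vector of $V[p,r]$. Unlike in the untwisted situation of Proposition~\ref{screening2}, in the twisted sector $a$ has no zero mode, so there is no operator $Q$ whose kernel $\mathcal G^{tw}v_{p,r-1}$ might escape, and the resulting vector is genuinely singular rather than subsingular. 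It is nonzero for a cheap reason: $e^c_0 v_{p,r-1}$ has fermionic charge $0$ while $S^{tw}v_{p,r-1}$ has fermionic charge $-2$, so the charge-$0$ component of $\mathcal G^{tw}v_{p,r-1}$ is exactly $e^c_0 v_{p,r-1}$, and this equals the coefficient of $z^{-1}$ in $Y(e^c,z)v_{p,r-1}$, namely $S_p(-\tfrac{\a}{\cla})v_{p,r}$ (using $\langle c,-\tfrac{p+1}{2}\overline d+(r-1)c\rangle=-(p+1)$ and $c(-n)=-\a(-n)/\cla$), which is nonzero since the $\a(-n)$ act freely on $\mathcal F_{p,r}$. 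This also produces the first summand of (\ref{sing_par}).

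Finally one must check that $\mathcal G^{tw}v_{p,r-1}$ is exactly the right-hand side of (\ref{sing_par}), i.e.\ that $S^{tw}v_{p,r-1}$ accounts for the double sum. For this one applies the explicit action of the half-integer modes $a_m$ on Fock vectors — the exact analogue of (\ref{an}), expressing $a_m v_{p,r'}$ as a sum of terms $\Psi(-j-\tfrac12)S_{\ast}(-\tfrac{\a}{2\cla})v_{p,r'+1/2}$ — twice in succession: first to compute $a_{i+1/2}v_{p,r-1}$ (a one-fermion state), then to compute $a_{-i-1/2}$ applied to that, producing two-fermion states; summing $\tfrac{1}{i+1/2}$ times these and reindexing yields the double sum of (\ref{sing_par}). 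Moving the second mode past the fermion produced by the first, through the (anti)commutation relations, and collapsing the resulting convolutions of Schur polynomials, is the computational heart of the proof; it runs in parallel with the computation behind Proposition~\ref{screening2}, with $S$, $v_{p,r}$ and the integer index shifts there replaced by $S^{tw}$, $v_{p,r-1}$ and the corresponding half-integer shifts. I expect this resummation to be the only real obstacle; the rest of the argument is structural.
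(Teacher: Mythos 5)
Your proposal follows essentially the same route as the paper: the paper's proof likewise observes that $(\Pi(0)^{1/2}\otimes F^{(2)}).v_{p,r-1}$ is a $\sigma$-twisted module, so that $\mathcal G^{tw}$ is a screening operator commuting with $V^\SH(c_L,\cla)$, concludes that $\mathcal G^{tw}v_{p,r-1}=e^c_0v_{p,r-1}-S^{tw}v_{p,r-1}=S_p(c)v_{p,r}-\sum_{j\ge0}\tfrac{1}{j+1/2}a_{-j-1/2}a_{j+1/2}v_{p,r-1}$ is singular, and leaves the evaluation as a direct calculation. Your additional observations (the $d(0)$-grading showing $\mathcal G^{tw}$ maps $\mathcal F_{p,r-1}$ to $\mathcal F_{p,r}$, the parity of $p$ dictating half-integer modes of $a$, and the fermionic-charge argument for non-vanishing) are correct elaborations of what the paper leaves implicit.
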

\begin{proof}
Since $(\Pi(0) ^{1/2} \otimes F^{(2)}) . v_{p,r-1} $ is   $ \sigma = e^{  \pi i d(0)}$--twisted  $\Pi(0) ^{1/2} \otimes F^{(2)}$--module, operator $\mathcal G^{tw}$ gives a screening operator which commutes with the action of the vertex algebra
 $V^\SH(c_L, \cla)$. Therefore
 $\mathcal G^{tw} v_{p,r-1}$ is a singular vector. Direct calculation shows that
\begin{equation*}
\mathcal G^{tw} v_{p,r-1} = e^c _0 v_{p,r-1} - S^{tw} v_{p,r-1} = S_p (c)  \vpr - \sum_{j=0}^{\infty}   \frac{1}{j+\frac{1}{2} } a_{- j -\tfrac{1}{2} }  a_{ j+\tfrac{1}{2}  } v_{p,r-1}.
\end{equation*}
The proof follows. 
\end{proof}

\begin{theorem}  \label{sing-p-even-1}
Assume that $p\in\N$ is even. Then we have the following family of  singular vectors in $V[p,r] = \mathcal F_{p,r}$:
$$ u_{p,r} ^{(n)} := ( \mathcal G^{tw} ) ^n  v_{p, r-n},\ n \in \N.$$
\end{theorem}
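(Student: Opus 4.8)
The plan is to iterate the argument already used in the proof of Theorem \ref{singeven}, exactly as the untwisted case iterates Proposition \ref{screening2} to give Theorem \ref{sing-p-odd-1}. The key point is that $\mathcal G^{tw}$ is a screening operator on the $\sigma$--twisted $\Pi(0)^{1/2}\otimes F^{(2)}$--module $(\Pi(0)^{1/2}\otimes F^{(2)}).v_{p,r'}$ for \emph{every} $r'\in\C$, by Theorem \ref{screeninzi}; in particular it commutes with the action of $V^\SH(c_L,\cla)$ on each such module. Since $v_{p,r-n}$ is a highest weight (hence singular) vector for $V^\SH(c_L,\cla)$ for every $n$, and $\mathcal G^{tw}$ commutes with that action, each application of $\mathcal G^{tw}$ preserves the property of being annihilated by $\SH^+$. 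By induction on $n$, $u_{p,r}^{(n)}=(\mathcal G^{tw})^n v_{p,r-n}$ is therefore a singular vector, provided it is nonzero.

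The remaining — and only substantive — point is nontriviality of $(\mathcal G^{tw})^n v_{p,r-n}$. First I would observe, as in Theorem \ref{singeven}, that $v_{p,r-n}\in\mathcal F_{p,r-n}$, and that $\mathcal G^{tw}=e^c_0-S^{tw}$ shifts the $c$--charge by $+1$; more precisely the leading term (with respect to the filtration by number of fermionic generators, or equivalently by applying $e^c_0$ $n$ times and discarding the $S^{tw}$ contributions) of $(\mathcal G^{tw})^n v_{p,r-n}$ is a nonzero multiple of $S_{np}\!\left(-\tfrac{\a}{\cla}\right)$ applied to $e^{-\frac{p+1}{2}\overline d+(r+n)c}\cdot(\text{shift})$, which lies in a Fock space that embeds in $\mathcal F_{p,r}$. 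Concretely, since $(e^c_0)^n v_{p,r-n}$ is a nonzero bosonic vector and the corrections from $S^{tw}$ strictly increase the fermionic length, there can be no cancellation; this shows $u_{p,r}^{(n)}\ne0$. I would spell this out exactly in the same style as the bullet list in the proof of Proposition \ref{screening2}, but using $\mathcal G^{tw}$ in place of $\mathcal G$ and noting $Q$ plays no role here because $v_{p,r-n}$ already lies in $\overline{\mathcal M_{p,r-n}}=\Ker Q$ trivially (there is no $Q$ condition in the twisted setting used for $p$ even).

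Finally I would record the conformal weight: $u_{p,r}^{(n)}$ has weight $h_{p,r-n}+np$, and using $h_{p,r-1}=h_{p,r}+p$ repeatedly one gets $h_{p,r-n}=h_{p,r}+np$, so $u_{p,r}^{(n)}$ has weight $h_{p,r}+2np$; equivalently it sits at conformal weight $np$ above the highest weight vector $v_{p,r}$ of $V[p,r]$. I expect the main obstacle to be purely bookkeeping: making the ``leading term'' argument for nonvanishing precise enough that the reader is convinced no cancellation occurs after $n$ applications of $\mathcal G^{tw}$ — this is the twisted analogue of the nontriviality check in Proposition \ref{screening2}, and it is routine but must be stated carefully. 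One clean way is to pass to the associated graded with respect to fermionic length on $\Pi(0)^{1/2}\otimes F^{(2)}$, on which $\mathcal G^{tw}$ acts as $e^c_0$, so that $\mathrm{gr}\,(\mathcal G^{tw})^n v_{p,r-n}=(e^c_0)^n v_{p,r-n}\ne0$.
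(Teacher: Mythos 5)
Your proposal matches the paper's (largely implicit) argument: Theorem \ref{sing-p-even-1} is obtained exactly by iterating the proof of Theorem \ref{singeven}, using that $\mathcal G^{tw}$ is a screening operator on each $\sigma$--twisted module $(\Pi(0)^{1/2}\otimes F^{(2)}).v_{p,r'}$ and shifts the $c$--charge so that $(\mathcal G^{tw})^n v_{p,r-n}$ lands in $\mathcal F_{p,r}$; your nonvanishing argument via the associated graded by fermionic length, on which $\mathcal G^{tw}$ acts as the injective operator $e^c_0$, is the same mechanism the paper invokes when it asserts right after the theorem that $\mathcal G^{tw}$ is injective. The one error is in your weight bookkeeping: since $\mathcal G^{tw}$ commutes with $L(0)$, it preserves conformal weight, so $u_{p,r}^{(n)}$ has weight $h_{p,r-n}=h_{p,r}+np$, not $h_{p,r-n}+np=h_{p,r}+2np$; your final phrase (``$np$ above $v_{p,r}$'') is the correct statement, and it agrees with the $n=1$ case of Theorem \ref{singeven} and with the leading term $(\a(-p))^n v_{p,r}$ used in Lemma \ref{nejed2}.
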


Note next that the screening operator $ \mathcal G^{tw}$ is injective, and therefore it gives an inclusion of the Verma module
$V[p,r-1] = \mathcal F_{p,r-1}$ into $V[p,r]$. We have the following conclusion:

\begin{corollary} \label{embed-verma-1} Assume that $p\in\N$ is even. The Verma module $V[p,r]$ contains a non-trivial submodule isomorphic to the Verma module $V[p, r-1]$.
\end{corollary}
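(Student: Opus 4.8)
The statement is the $n=1$ instance of the construction in Theorem \ref{sing-p-even-1}, and the plan is to read off the asserted embedding of Verma modules from the screening operator $\mathcal{G}^{tw}$. First I would note that by Theorem \ref{screeninzi} the operator $\mathcal{G}^{tw}=e^c_0-S^{tw}$ commutes with the action of $V^\SH(c_L,\cla)$ on the $\sigma$-twisted $\Pi(0)^{1/2}\otimes F^{(2)}$-module $\mathcal{M}_{p,r}=(\Pi(0)^{1/2}\otimes F^{(2)}).v_{p,r-1}$, and that $e^c_0$ and each summand $a_{-i-1/2}a_{i+1/2}$ of $S^{tw}$ raise the lattice charge by $c$; since $v_{p,r}$ spans the lattice-charge-$+c$ shift of $v_{p,r-1}$, the operator $\mathcal{G}^{tw}$ restricts to a $V^\SH(c_L,\cla)$-module homomorphism $\mathcal{F}_{p,r-1}\to\mathcal{F}_{p,r}$. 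By Theorem \ref{singeven}, $\mathcal{G}^{tw}v_{p,r-1}=u_{p,r}$ is a nonzero singular vector of weight $(h_{p,r-1},(1+p)\cla)$ in $\mathcal{F}_{p,r}\cong V[p,r]$ (Proposition \ref{realizacija_Verma}, applicable since $p\in\N\not\subseteq\Z_{<0}$), so its image $\mathcal{G}^{tw}(\mathcal{F}_{p,r-1})=\langle u_{p,r}\rangle$ is a nonzero highest weight submodule of $V[p,r]$ carrying exactly the highest weight of $V[p,r-1]$.

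By the universal property of Verma modules there is a surjection $V[p,r-1]\twoheadrightarrow\langle u_{p,r}\rangle$, unique up to scalar, and it agrees with the composite $V[p,r-1]\xrightarrow{\ \sim\ }\mathcal{F}_{p,r-1}\xrightarrow{\ \mathcal{G}^{tw}\ }\langle u_{p,r}\rangle$ (Proposition \ref{realizacija_Verma}). Hence the corollary reduces to the injectivity of $\mathcal{G}^{tw}\colon\mathcal{F}_{p,r-1}\to\mathcal{F}_{p,r}$: once that is known, $\langle u_{p,r}\rangle\cong\mathcal{F}_{p,r-1}\cong V[p,r-1]$, as claimed. This injectivity is the one substantial point.

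To prove it I would argue by leading terms in two compatible gradings. Note that $\mathcal{F}_{p,r-1}=M\!\left(1,-\tfrac{p+1}{2}\overline d+(r-1)c\right)\otimes F^{(2)}$ is bounded below in conformal weight with finite-dimensional graded pieces, and that both $e^c_0$ and $S^{tw}$ preserve conformal weight (since $e^c$ and $a$ have $L(0)$-eigenvalue $1$). Fix a conformal-weight subspace $V_N$ and grade it by the fermionic charge $F$ on the $F^{(2)}$-factor (number of $\Psi^+$-modes minus number of $\Psi^-$-modes), which takes finitely many values on $V_N$. Then $e^c_0$ acts trivially on the $F^{(2)}$-factor, hence preserves $F$, while every mode of $a=\Psi^-(-\tfrac12)e^{c/2}$ lowers $F$ by one, so $S^{tw}$ lowers $F$ by two; therefore, for $0\neq u\in V_N$ with top nonzero $F$-component $u_{f_0}$, the $F=f_0$ component of $\mathcal{G}^{tw}u$ is $e^c_0u_{f_0}$. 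It thus suffices to see that $e^c_0$ is injective on $\mathcal{F}_{p,r-1}$. For this, grade by the number of $d(-n)$-modes (again bounded below): writing $Y(e^c,z)=\epsilon\,E^-(c,z)E^+(c,z)e^c z^{c(0)}$, the factor $E^-$ (multiplication by the $c(-n)$'s) preserves this grading, while $E^+$ (a differential operator in the $d(-n)$'s, since $c(n)=2n\,\partial_{d(-n)}$ for $n>0$) strictly lowers it, so the top-graded component of $e^c_0$ is multiplication by the nonzero Schur polynomial $S_p$ in the $c(-n)$'s (exactly as in Theorem \ref{singeven}), which is injective because $\mathbb{C}[c(-n),d(-n):n\ge1]\otimes\Lambda(\Psi^\pm)$ is free over the polynomial domain $\mathbb{C}[c(-n),d(-n)]$. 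Injectivity on the associated graded of a grading bounded below gives injectivity of $e^c_0$, hence $\mathcal{G}^{tw}u\neq0$, completing the argument. The main obstacle is precisely this injectivity; everything else is formal given Theorems \ref{screeninzi} and \ref{singeven} and Proposition \ref{realizacija_Verma}.
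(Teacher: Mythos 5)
Your proposal follows the same route as the paper: the corollary is deduced from the injectivity of the screening operator $\mathcal G^{tw}\colon\mathcal F_{p,r-1}\to\mathcal F_{p,r}$, which intertwines the $V^\SH(c_L,\cla)$-action and sends the highest weight vector to the singular vector $u_{p,r}$ of Theorem \ref{singeven}, so that its image is the submodule $\langle u_{p,r}\rangle\cong V[p,r-1]$. The only difference is that the paper simply asserts this injectivity, whereas you supply a correct proof of it (a leading-term analysis first in the fermionic charge, which reduces the question to injectivity of $e^c_0$, and then in the $d$-degree, which reduces it to multiplication by the nonzero Schur polynomial $S_p(c)$ on a free module over $\C[c(-n),d(-n)]$), so your write-up is, if anything, more complete than the paper's.
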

Free f{}ield realisation in case $p\in\N$ even is shown in Figure \ref{ff_paran_poz}.

\section{Relations in $V[p,r]$, $p<0$}
We f{}irst present an explicit derivation of a singular vector in $V[p,r]$, $p<0$. Our analysis is analogous to that of \cite[Section 4]{AR2}.

By direct calculations in $\Pi(0)^{(1/2)}\otimes F^{(2)}$ we f{}ind
\begin{align*}
\frac{1}{\sqrt{2}}\Psi^-(-\tfrac{1}{2})G(-\tfrac{3}{2})e^{-c} &= \left(\frac{1}{2}c(-1)\Psi^-(-\tfrac{1}{2})\Psi^+(-\tfrac{1}{2})+\right.\\
	&-\left.\frac{c_L-15}{12}\Psi^-(-\tfrac{3}{2})\Psi^-(-\tfrac{1}{2})+\Psi^+(-\tfrac{3}{2})\Psi^-(-\tfrac{1}{2})\right)e^{-c}\\
L(-2)e^{-c}&=\left(\frac{1}{2}c(-1)d(-1)-\frac{1}{2}d(-2)+\frac{c_L-27}{24}c(-2)+\right.\\
	&+\left.\frac{1}{2}\Psi^+(-\tfrac{3}{2})\Psi^-(-\tfrac{1}{2})+\frac{1}{2}\Psi^-(-\tfrac{3}{2})\Psi^+(-\tfrac{1}{2})\right)e^{-c}
\end{align*}
so we have
\begin{align*}
&\left(L(-2)-\frac{c_L-27}{24}c(-2)-\frac{1}{\sqrt{2}}\Psi^-(-\tfrac{1}{2})G(-\tfrac{3}{2})-\frac{c_L-15}{12}\Psi^-(-\tfrac{3}{2})\Psi^-(-\tfrac{1}{2})\right)e^{-c}=\\
&\qquad=-\frac{1}{2}L(-1)(d(-1)-\Psi^-(-\tfrac{1}{2})\Psi^+(-\tfrac{1}{2}))e^{-c}
\end{align*}

Def{}ine
\begin{align*}
R:=&\left(L(-2)-\frac{c_L-27}{24}c(-2)-\frac{1}{\sqrt{2}}\Psi^-(-\tfrac{1}{2})G(-\tfrac{3}{2})+\right.\\
	&\left.-\frac{c_L-15}{12}\Psi^-(-\tfrac{3}{2})\Psi^-(-\tfrac{1}{2})\right)e^{-c}.
\end{align*}
Since $(L(-1)a)_0=0$ in every VOA, we conclude that $R_0\vpr=0$.

\begin{proposition}\label{sing_neg_p}
Let $p\in\Z_{<0}$, $r\in\C$. Then $u_{p,r}=\Phi(p,r)\vpr$, where
\begin{align*}
\Phi(p,r) =& \sum_{i=1}^{-p}\left(L(-i)+\frac{c_L-27}{24\cla}\a(-i)\right)S_{-p-i}\left(\frac{\a}{\cla}\right)+\\
+&S_{-p}\left(\frac{\a}{\cla}\right)\left(L(0)+\frac{c_L-3}{24\cla}\a(0)\right)+\\
+&\frac{1}{2\cla}\sum_{i=0}^{-p-1}\sum_{k=0}^{-p-i-1}\Psi\left(-i-\frac{1}{2}\right)G\left(-k-\frac{1}{2}\right)S_{-p-i-k-1}\left(\frac{\a}{\cla}\right)+\\
-&\frac{c_L-15}{24\cla^2}\sum_{i=0}^{-p-1}\sum_{k=0}^{-p-i-1}i\Psi\left(-i-\frac{1}{2}\right)\Psi\left(-k-\frac{1}{2}\right)S_{-p-i-k-1}\left(\frac{\a}{\cla}\right).
\end{align*}
is a non-trivial singular vector (of weight $h_{p,r}-p=h_{p,r+1}$) in $V[p,r]$.
\end{proposition}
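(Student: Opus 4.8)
The plan is to follow \cite[Section~4]{AR2}: read the singular vector off the vanishing of a total $L(-1)$-derivative acting on a Fock-space highest weight vector, and then transfer the resulting relation from the Fock space $\mathcal F_{p,r}$ to the Verma module $V[p,r]$. The vector $R$ defined above is, by the displayed identity immediately preceding the statement, a total $L(-1)$-derivative in $\Pi(0)^{1/2}\otimes F^{(2)}$, namely $R=-\tfrac12\,L(-1)\bigl((d(-1)-\Psi^-(-\tfrac12)\Psi^+(-\tfrac12))e^{-c}\bigr)$. Hence $Y(R,z)$ is a $z$-derivative and its zero mode vanishes, $R_0=0$, on every $\Pi(0)^{1/2}\otimes F^{(2)}$-module; in particular the relation $R_0\vpr=0$ noted in the text holds for all $r$, so also $R_0\,v_{p,r+1}=0$. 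As $R$ has $\Z c$-charge $-1$ and conformal weight $1$, the operator $R_0$ lowers the $\Z c$-charge by one and preserves conformal weight, so $R_0\,v_{p,r+1}$ lives in $\mathcal F_{p,r}$, in conformal weight $h_{p,r+1}=h_{p,r}-p$.

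Next I would compute $R_0\,v_{p,r+1}$ explicitly in the free-field realisation. Using the identity
\[
Y(e^{-c},z)\,v_{p,r+1}=C\,z^{\,p+1}\Bigl(\textstyle\sum_{m\ge 0}S_m\bigl(\tfrac{\alpha}{\cla}\bigr)z^{m}\Bigr)v_{p,r},\qquad C\neq 0,
\]
in which the Schur polynomials come from the exponential $\exp\bigl(\tfrac1{\cla}\sum_{n\ge 1}\tfrac{\alpha(-n)}{n}z^{n}\bigr)$ of negative $c$-modes appearing in $Y(e^{-c},z)$ together with $\alpha=-\cla\,c(-1)$, and using the commutators of $L(n)$, $G(n-\tfrac12)$, $\Psi(n-\tfrac12)$ with $Y(e^{-c},z)$ --- all consequences of the brackets in Definition~\ref{def} and of $L(m)e^{-c}=0$ ($m\ge 1$), $L(0)e^{-c}=-e^{-c}$ --- one writes $Y(R,z)$ as the appropriate iterate of $Y(\omega,z),Y(\alpha,z),Y(\tau,z),Y(\Psi,z)$ against $Y(e^{-c},z)$ and extracts the coefficient of $z^{-1}$. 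After routine normal-ordered manipulations the four summands of $R$ yield the four groups of terms of $\Phi(p,r)$: the summand $L(-2)e^{-c}$ produces $\sum_{i=1}^{-p}L(-i)S_{-p-i}(\tfrac{\alpha}{\cla})$ together with the scalar term $S_{-p}(\tfrac{\alpha}{\cla})\bigl(L(0)+\tfrac{c_L-3}{24\cla}\alpha(0)\bigr)$ (the latter from the regular part of $L(z)$ acting on $v_{p,r+1}$ through $L(0)$ and $L(-1)$, after rewriting the weight via $\overline d=d-\tfrac{c_L-3}{12}c$); the summand $-\tfrac{c_L-27}{24}c(-2)e^{-c}=\tfrac{c_L-27}{24\cla}\alpha(-2)e^{-c}$ supplies the coefficients $\tfrac{c_L-27}{24\cla}\alpha(-i)$ in the first sum; the summand $-\tfrac1{\sqrt2}\Psi^-(-\tfrac12)G(-\tfrac32)e^{-c}$ gives $\tfrac1{2\cla}\sum_{i,k}\Psi(-i-\tfrac12)G(-k-\tfrac12)S_{-p-i-k-1}(\tfrac{\alpha}{\cla})$; and the summand $-\tfrac{c_L-15}{12}\Psi^-(-\tfrac32)\Psi^-(-\tfrac12)e^{-c}$ gives $-\tfrac{c_L-15}{24\cla^2}\sum_{i,k}i\,\Psi(-i-\tfrac12)\Psi(-k-\tfrac12)S_{-p-i-k-1}(\tfrac{\alpha}{\cla})$. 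Altogether $R_0\,v_{p,r+1}=\Phi(p,r)\,v_{p,r}$, so $R_0=0$ forces $\Phi(p,r)\,v_{p,r}=0$ in $\mathcal F_{p,r}$.

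Finally I would transfer this to $V[p,r]$. Since $p\in\Z_{<0}$, Proposition~\ref{ired-pmanji} shows that the canonical $V^{\SH}(c_L,\cla)$-homomorphism $V[p,r]\to\mathcal F_{p,r}$ sending the highest weight vector to $\vpr$ has image $\langle\vpr\rangle=L[p,r]$, hence kernel equal to the maximal proper submodule $J$ of $V[p,r]$. Reading $u_{p,r}=\Phi(p,r)\vpr$ inside $V[p,r]$, the previous step gives $u_{p,r}\in J$; its conformal weight is $h_{p,r}-p=h_{p,r+1}$ by (\ref{hpr}) and $h_{p,r}+p=h_{p,r-1}$, and, $\alpha(0)$ being central in $\SH$, its $\alpha(0)$-eigenvalue is $(1+p)\cla$. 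It is non-zero because, in the PBW basis of $V[p,r]$, the $i=-p$ summand of $\Phi(p,r)$ (where $S_0=1$) contributes the basis monomials $L(-(-p))\vpr$ and $\alpha(-(-p))\vpr$, which no other summand --- each carrying a Schur polynomial of positive degree or a product of two odd modes --- can cancel. To see that $u_{p,r}$ is singular: when $p$ is even the maximal submodule $J$ is contained in conformal weights $\ge h_{p,r}-p$ (by Theorem~\ref{struktura} and the underlying determinant formula the lowest singular vector of $V[p,r]$ occurs in conformal weight $h_{p,r}-p$), so $\SH^+u_{p,r}\subseteq J$ would lie in strictly smaller conformal weight and therefore vanishes; when $p$ is odd one argues likewise, invoking the submodule structure of $V[p,r]$ established in the remainder of this section (and recorded in the Appendix), which pins down a one-dimensional space of singular vectors of $V[p,r]$ in conformal weight $h_{p,r}-p$.

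I expect the principal obstacle to be the bookkeeping in the second step --- arranging the residue of the iterate $Y(R,z)v_{p,r+1}$ so that all contributions assemble into precisely the displayed operator $\Phi(p,r)$, in particular verifying that the regular-part-of-$L(z)$ contribution reorganises into the $S_{-p}(\tfrac{\alpha}{\cla})(L(0)+\tfrac{c_L-3}{24\cla}\alpha(0))$ term with the correct coefficient, and that the two fermionic double sums come out with the correct ranges, index weights and signs. A secondary subtlety is establishing genuine singularity (rather than mere membership in $J$) in the $p$ odd case.
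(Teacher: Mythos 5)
Your derivation of the formula follows the paper's route exactly: the paper's entire argument for this proposition is the computation displayed just before the statement, showing that $R$ is the $L(-1)$-derivative $-\tfrac12 L(-1)\bigl((d(-1)-\Psi^-(-\tfrac12)\Psi^+(-\tfrac12))e^{-c}\bigr)$ and hence $R_0=0$; the expansion of $R_0$ against a highest weight vector (which you carry out via $Y(e^{-c},z)v_{p,r+1}=C\,z^{p+1}\sum_m S_m(\tfrac{\alpha}{\cla})z^m\,v_{p,r}$), the transfer to $V[p,r]$ through Proposition \ref{ired-pmanji}, and the non-vanishing via the leading PBW term $L(p)\vpr$ (which the paper only makes explicit later, in the proof of Theorem \ref{max-p-even}) are all left implicit there, so you have in fact supplied more of the argument than the authors do. Your choice of applying $R_0$ to $v_{p,r+1}$ rather than to $\vpr$ (as the paper writes) is the natural one for producing the relation $\Phi(p,r)v_{p,r}=0$ in $\mathcal F_{p,r}$; the two are equivalent up to a shift of $r$.

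The one genuine gap is in your final step, the verification that $u_{p,r}$ is singular and not merely an element of the maximal submodule $J$. For $p<0$ even your weight argument is sound: by Theorem \ref{xx} (whose proof does not depend on this proposition) $J$ is concentrated in conformal weights $\ge h_{p,r}+|p|$ with one-dimensional bottom, so any nonzero vector of $J$ of weight $h_{p,r+1}$ is automatically annihilated by $\SH^+$. For $p<0$ odd this fails: $J$ already begins at weight $h_{p,r}+|p|/2$, so $\SH^+u_{p,r}$ lands in nonzero weight spaces of $J$ and nothing forces it to vanish. Your fallback --- invoking ``the submodule structure established in the remainder of the section'' --- is not available: the paper explicitly only \emph{conjectures} the structure of $V[p,r]$ for $p<0$ odd, and even granting that the singular subspace of $J$ in weight $h_{p,r+1}$ is one-dimensional, that would not show that $u_{p,r}$ lies in it rather than differing from a singular vector by a non-singular element of the same weight space. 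To close this for odd $p<0$ one would have to check $Xu_{p,r}=0$ directly for a generating set of $\SH^+$, or find a screening-type intertwining argument. In fairness, the paper is equally silent on this point --- the proposition is stated with no proof of the annihilation property --- but as a self-contained argument yours is incomplete exactly there.
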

Unfortunately, this method cannot be used for a construction of singular vectors when $p<0$ is odd.

\section{The $q$--character of $L[p,r]$}

\begin{lemma}\label{nejed}
We have:
\begin{align}
\ch L[p,r] &\ge q^{h_{p,r}} (1 -q ^{\tfrac{\vert p\vert}{2}})   \prod_{k=1} ^{\infty} \frac{ (1 + q^{k-1/2}) ^2}{(1-q^k) ^2},\quad\text{if }p\text{ is odd;}\nn
\ch L[p,r] &\ge q^{h_{p,r}} (1 -q ^{\vert p\vert   })   \prod_{k=1} ^{\infty} \frac{ (1 + q^{k-1/2}) ^2}{(1-q^k) ^2},\quad\text{if }p\text{ is even.}\nonumber
\end{align}
\end{lemma}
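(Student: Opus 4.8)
The plan is to establish a lower bound on $\ch L[p,r]$ by exhibiting, inside $L[p,r]$, a spanning set which is large enough. Since characters satisfy $\ch L[p,r] = \ch L[-p,-r]$ (contragredient duality), it suffices to treat one sign of $p$; I would take $p < 0$, where by Proposition \ref{ired-pmanji} we have the concrete realisation $L[p,r] = \langle v_{p,r}\rangle \subset \mathcal F_{p,r}$. The strategy is then: (i) quotient out a known submodule of $\mathcal F_{p,r}$, namely the one generated by the singular vector $u_{p,r} = \Phi(p,r) v_{p,r}$ produced in Proposition \ref{sing_neg_p} (for $p<0$; its conformal weight is $h_{p,r}-p = h_{p,r+1}$, matching the $(1 - q^{|p|})$ factor when $p$ is even, and similarly $u_{p,r}$ from Theorem \ref{podd} has weight $h_{p,r-1/2}$ giving the $(1-q^{|p|/2})$ factor when $p$ is odd — here one passes through the duality to move the singular vector of Theorem \ref{podd}, which lives in $V[p,r]$ for $p>0$ odd, to the $p<0$ side); (ii) observe that $L[p,r]$ is a quotient of $\mathcal F_{p,r}/\langle u_{p,r}\rangle$, so $\ch L[p,r] \le \ch \mathcal F_{p,r}/\langle u_{p,r}\rangle$ — but that is the \emph{wrong} direction. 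So instead the real plan is the reverse: produce a spanning family of $L[p,r]$ that is small, pass to characters to get $\le$, which is Lemma \ref{nema-sing-c} territory; to get $\ge$ one must instead show these elements are linearly \emph{independent} in $L[p,r]$.

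Concretely, I would proceed as follows. Work with $p<0$ and the realisation $L[p,r] = \langle v_{p,r}\rangle$. The universal vertex algebra $V^\SH(c_L,\cla)$ has the PBW basis of monomials $\Psi_{-\l^-}\a_{-\mu^-} G_{-\l^+} L_{-\mu^+}\cdot$ with $\mu^+_i \ne 1$, $\l^+_i \ne 1/2$; acting on $v_{p,r}$ these span $L[p,r]$, but with relations coming from the null vector $u_{p,r}$. The point is that the character on the right-hand side of the Lemma equals exactly
\[
\ch V^\SH(c_L,\cla)\cdot\frac{q^{h_{p,r}}(1 - q^{|p|/2\text{ or }|p|})}{1 - q^{1/2}},
\]
and since $\ch V^\SH(c_L,\cla) = (1-q^{1/2})\prod (1+q^{k-1/2})^2/(1-q^k)^2$ this is $q^{h_{p,r}}(1-q^{\text{exp}})\prod(1+q^{k-1/2})^2/(1-q^k)^2$, i.e.\ precisely ``$\ch V[p,r]$ minus a shifted copy of itself''. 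So the lower bound asserts that the submodule generated by the singular vector is \emph{not too small}: $\ch\langle u_{p,r}\rangle \le q^{h_{p,r}+|p|/2\text{ or }|p|}\prod(\cdots)$, equivalently that the natural surjection $V[p,r'] \twoheadrightarrow \langle u_{p,r}\rangle$ (where $r' = r - 1/2$ or $r-1$, so that $V[p,r']$ has the right top weight) induces $\ch\langle u_{p,r}\rangle \le \ch V[p,r']$, hence $\ch L[p,r] = \ch V[p,r] - \ch\langle u_{p,r}\rangle \ge \ch V[p,r] - \ch V[p,r']$, which is the claimed bound. This last chain is the cleanest route: $\langle u_{p,r}\rangle$ is a quotient of the Verma module $V[p,r']$ because $u_{p,r}$ is singular of the appropriate weight, so its character is $\le \ch V[p,r']$ termwise, and $\ch V[p,r] - \ch V[p,r']$ is exactly the right-hand side.

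The main obstacle is ensuring the singular vector $u_{p,r}$ is genuinely \emph{non-trivial} (nonzero) in $V[p,r]$ — this is what makes $\langle u_{p,r}\rangle$ nonzero and the subtraction legitimate rather than vacuous, but more importantly one must be careful that subtracting $\ch V[p,r']$ does not overshoot, i.e.\ that $\ch L[p,r] = \ch V[p,r] - \ch\langle u_{p,r}\rangle$ with $\ch\langle u_{p,r}\rangle \le \ch V[p,r']$ genuinely gives a \emph{lower} bound and not something that could be negative; this holds because $\ch V[p,r] - \ch V[p,r'] = q^{h_{p,r}}(1-q^{\text{exp}})\prod(\cdots)$ has manifestly nonnegative coefficients (the factor $1-q^{\text{exp}}$ is absorbed since $\prod 1/(1-q^k)^2$ dominates). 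For $p$ even this is all in place via Proposition \ref{sing_neg_p} and its non-triviality claim; for $p$ odd and $p<0$ the singular vector at weight $|p/2|$ is guaranteed abstractly by Theorem \ref{struktura}(3), and one invokes that plus non-triviality (or transports the explicit vector $u_{p,r}$ of Theorem \ref{podd} from the $p>0$ side via $L[p,r]^* \cong L[-p,-r]$ and the fact that Verma modules and their characters are preserved under contragredient duality at the level of top weights). I would handle the odd case by duality to avoid needing an explicit formula on the $p<0$ side, where Proposition \ref{sing_neg_p}'s method fails.
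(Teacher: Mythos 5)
Your reduction to the case $p<0$ via contragredient duality and your use of Proposition \ref{ired-pmanji} to realise $L[p,r]=\langle v_{p,r}\rangle\subset\mathcal F_{p,r}$ both match the paper, but the engine of your argument has a genuine gap. You write $\ch L[p,r]=\ch V[p,r]-\ch\langle u_{p,r}\rangle$ and then bound $\ch\langle u_{p,r}\rangle\le\ch V[p,r']$. That equality holds only if $\langle u_{p,r}\rangle$ is the \emph{maximal} submodule of $V[p,r]$ --- which is precisely one of the hard structural results of the paper (Theorems \ref{max-p-even} and \ref{max-p-odd-manji}), proved \emph{using} the character formula, and which is moreover false in the form you need for $p$ odd, where the maximal submodule is generated by a subsingular vector strictly containing the submodule generated by the singular one. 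What you actually know from the existence of a nonzero singular vector $u_{p,r}$ is $\langle u_{p,r}\rangle\subseteq N$ for $N$ the maximal submodule, i.e.\ $\ch N\ge\ch\langle u_{p,r}\rangle$, hence only the upper bound $\ch L[p,r]\le\ch V[p,r]-\ch\langle u_{p,r}\rangle$. Combining this with $\ch\langle u_{p,r}\rangle\le\ch V[p,r']$ gives two inequalities pointing the same (wrong) way, and no lower bound on $\ch L[p,r]$ follows. To make your route work you would need an \emph{upper} bound on $\ch N$, which the existence of singular vectors cannot supply; this is exactly the content of the paper's Lemma \ref{nejed2} and the structure theorems, so your plan is circular.

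The paper's proof avoids the maximal submodule entirely: for $p<0$ it expands $(G_{-\l^+}\Psi_{-\l^-}L_{-\mu^+}\a_{-\mu^-})v_{p,r}$ in the monomial basis (\ref{PBW-2}) of $\mathcal F_{p,r}$ and computes the leading coefficient $\nu=(-\cla)^{\ell_{\mu^-}}(-\sqrt2\cla)^{\ell_{\l^-}}\prod_i\tfrac{\mu^+_i+p}{-2}\prod_k\tfrac{2\l^+_k+p}{-\sqrt2}$ with respect to the ordering (\ref{ordering-0}). This coefficient is nonzero exactly when no $\mu^+_i=-p$ and (for $p$ odd) no $\l^+_i=-p/2$, so the corresponding monomials acting on $v_{p,r}$ are linearly independent in $L[p,r]$ by triangularity, and the character of their span is exactly the right-hand side. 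If you want to repair your write-up, this direct exhibition of a linearly independent family inside the Fock realisation is the step you are missing.
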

\begin{proof}
It suf{}f{}ices to prove the statements for $p<0$. The proof for $p>0$ then follows by using contragredient modules.

We have
$$(G_{-\l^+}\Psi_{-\l^-}L_{-\mu^+}\a_{-\mu^-})\vpr = \nu(\Psi^+_{-\l^+}\Psi^-_{-\l^-}d_{-\mu^+}c_{-\mu^-})\vpr+\cdots$$
where
$$\nu=(-\cla)^{\ell_{\mu^-}}(-\sqrt{2}\cla)^{\ell_{\l^-}}\prod_{i=1}^{\ell_{\mu^+}}\frac{\mu^+_i+p}{-2}\prod_{k=1}^{\ell_{\l^+}}\frac{2\l^+_1+p}{-\sqrt{2}}$$
and $\cdots$ denotes a sum of basis vectors $w_i$ of the type (\ref{PBW-2}) such that $w_i<(\Psi^+_{-\l^+}\Psi^-_{-\l^-}d_{-\mu^+}c_{-\mu^-})$ where we consider the ordering (\ref{ordering-0}) with respect to pair $(\mu^+,\l^+)$. Therefore, the set of vectors
\bea\label{PBW-o}
&&(G_{-\l^+}\Psi_{-\l^-}L_{-\mu^+}\a_{-\mu^-})\vpr,\qquad\mu^+_i\ne-p,\ \l^+_i\ne-p/2
\eea
is linearly independent in $L[p,r]\subset \mathcal F_{p,r}$ if $p$ is odd. Likewise vectors
\bea\label{PBW-e}
&&(G_{-\l^+}\Psi_{-\l^-}L_{-\mu^+}\a_{-\mu^-})\vpr,\qquad\mu^+_i\ne-p
\eea
are linearly independent if $p$ is even.

The $q$--character of the subspace of $L[p,r]$ spanned by vectors (\ref{PBW-o}) is 
$$q^{h_{p,r}} (1 -q ^{\tfrac{\vert p\vert}{2}})   \prod_{k=1} ^{\infty} \frac{ (1 + q^{k-1/2}) ^2}{(1-q^k) ^2}$$
while the $q$--character of the space spanned by (\ref{PBW-e}) is 
$$q^{h_{p,r}} (1 -q ^{\vert p})   \prod_{k=1} ^{\infty} \frac{ (1 + q^{k-1/2}) ^2}{(1-q^k) ^2}$$
which proves both inequallities.
\end{proof}

\begin{lemma}\label{nejed2} We have:
\begin{align}
\ch L[p,r] &\le q^{h_{p,r}} (1 -q ^{\tfrac{\vert p\vert}{2}})   \prod_{k=1} ^{\infty} \frac{ (1 + q^{k-1/2}) ^2}{(1-q^k) ^2},\quad\text{if }p\text{ is odd;}\nn
\ch L[p,r] &\le q^{h_{p,r}} (1 -q ^{\vert p\vert   })   \prod_{k=1} ^{\infty} \frac{ (1 + q^{k-1/2}) ^2}{(1-q^k) ^2},\quad\text{if }p\text{ is even.}\nonumber
\end{align}
\end{lemma}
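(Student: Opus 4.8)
The plan is to produce an explicit spanning set of $L[p,r]$ whose graded dimension in each weight equals the claimed right‑hand side. By the contragredient isomorphism $L[p,r]^{\ast}\cong L[-p,-r]$, together with $h_{-p,-r}=h_{p,r}$ and $|-p|=|p|$, it suffices to prove the inequalities for $p\in\Z_{<0}$; this is exactly the range in which the realisation $L[p,r]=\langle v_{p,r}\rangle\subseteq\mathcal F_{p,r}$ of Proposition \ref{ired-pmanji} is available.

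So fix $p\in\Z_{<0}$. Since $L[p,r]=\langle v_{p,r}\rangle$, it is spanned by the monomials $(G_{-\l^+}\Psi_{-\l^-}L_{-\mu^+}\a_{-\mu^-})v_{p,r}$, $\l^{\pm}\in\mathcal{SP}$, $\mu^{\pm}\in\mathcal P$; using $\a_{-\mu^-}=(-\cla)^{\ell}c_{-\mu^-}$ and $\Psi_{-\l^-}=(-\sqrt2\cla)^{\ell}\Psi^-_{-\l^-}$ one may take the spanning set to be $\{G_{-\l^+}L_{-\mu^+}(\Psi^-_{-\l^-}c_{-\mu^-}v_{p,r})\}$. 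The assertion to be proved is that the subfamily with $\mu^+_i\ne-p$ and $\l^+_i\ne-p/2$ for all $i$ already spans $L[p,r]$: this subfamily was computed in the proof of Lemma \ref{nejed} to have $q$-character equal to the stated bound (the condition $\l^+_i\ne-p/2$ being vacuous when $p$ is even), so the two lemmas together pin down $\ch L[p,r]$ exactly.

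For the spanning, recall the triangular expansion in the basis (\ref{PBW-2}) of $\mathcal F_{p,r}$ obtained by iterating (\ref{ff_L})--(\ref{ff_G}):
\[
G_{-\l^+}L_{-\mu^+}(\Psi^-_{-\l^-}c_{-\mu^-}v_{p,r})=\nu_{\l^+,\mu^+}\,w_{\l^+,\l^-,\mu^+,\mu^-}+\sum_i c_i\,w_i,
\]
where each $w_i$ is strictly smaller than $w_{\l^+,\l^-,\mu^+,\mu^-}$ for the ordering (\ref{ordering-0}) applied to the pair $(\mu^+,\l^+)$, and
\[
\nu_{\l^+,\mu^+}=(-\cla)^{\ell_{\mu^-}}(-\sqrt2\cla)^{\ell_{\l^-}}\prod_i\tfrac{\mu^+_i+p}{-2}\prod_k\tfrac{2\l^+_k+p}{-\sqrt2}.
\]
The decisive point is that the structure constants $p+\mu_1$ in (\ref{ff_L}) and $2\l_1+p$ in (\ref{ff_G}) vanish precisely for $\mu_1=-p$ and $\l_1=-p/2$, so $\nu_{\l^+,\mu^+}=0$ exactly when $\mu^+$ contains $-p$ or $\l^+$ contains $-p/2$. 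One then argues by downward induction on $(\mu^+,\l^+)$ — a finite poset once the total weight of the monomial is fixed — that every monomial of the spanning set lies in the linear span of the restricted subfamily. A restricted monomial is already of the required type; for a non‑restricted one, the vanishing of its leading term, combined with the commutators of Definition \ref{def} and the singular‑vector relations of Theorem \ref{struktura} (and Proposition \ref{sing_neg_p}) that hold in $L[p,r]$ — the weight $|p|$ relation rewriting $L(-|p|)v_{p,r}$, and for $p$ odd the weight $|p|/2$ relation rewriting $G(-|p|/2)v_{p,r}$ in terms of $\Psi(-|p|/2)v_{p,r}$ and lower monomials — lets one express it as a combination of monomials of strictly smaller $(\mu^+,\l^+)$ applied to $v_{p,r}$, which are handled by the inductive hypothesis.

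The step I expect to be the main obstacle is the last one: one has to check carefully that, once the leading term of a non‑restricted monomial has been cancelled, the remaining lower‑order contributions in $\mathcal F_{p,r}$ genuinely reorganise into $V^{\SH}$-monomials of strictly smaller $(\mu^+,\l^+)$ acting on $v_{p,r}$ — rather than into arbitrary Clifford--Heisenberg monomials of $\mathcal F_{p,r}$ — so that the induction closes inside $L[p,r]$; this is where the precise form of the currents $L(z)$, $G(z)$ in the free‑field realisation and of the commutators of Definition \ref{def} is used. For $p$ even this can be avoided altogether: reducing (again by contragredient modules) to $p\in\N$, Corollary \ref{embed-verma-1} provides an embedding $V[p,r-1]\hookrightarrow V[p,r]$ whose image, having highest weight $h_{p,r-1}=h_{p,r}+p>h_{p,r}$, lies in the maximal proper submodule of $V[p,r]$, whence $\ch L[p,r]\le\ch V[p,r]-\ch V[p,r-1]=q^{h_{p,r}}(1-q^{|p|})\prod_{k\ge1}\frac{(1+q^{k-1/2})^2}{(1-q^k)^2}$, which is the stated bound in the even case.
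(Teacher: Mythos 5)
Your route is genuinely different from the paper's. The paper proves the inequalities for $p>0$, where $V[p,r]\cong\mathcal F_{p,r}$ and $L[p,r]$ is a \emph{quotient} of the Fock space: the explicit families $u^{(n)}_{p,r}$, $w^{(n)}_{p,r}$ of (sub)singular vectors from Theorems \ref{sing-p-odd-1} and \ref{sing-p-even-1}, with leading terms $\Psi(-\tfrac{p}{2})\a(-p)^n\vpr$ and $\a(-p)^n\vpr$, all lie in the maximal submodule and are used to delete, from the Fock basis (\ref{PBW-2}), every monomial containing the corresponding weight-$\tfrac{p}{2}$ fermionic and weight-$p$ bosonic mode; the case $p<0$ is then obtained by contragredient duality. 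You instead work at $p<0$ inside the \emph{submodule} $L[p,r]=\langle\vpr\rangle$ and try to show directly that the restricted family of $U(\SH^-)$-monomials spans. Your even-case shortcut via Corollary \ref{embed-verma-1}, giving $\ch L[p,r]\le\ch V[p,r]-\ch V[p,r-1]$, is correct and arguably cleaner than the basis-elimination argument; if completed, your odd-case argument would also be more economical, since it needs only the two lowest singular vectors rather than the infinite families produced by the screening operators.

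However, the step you flag as the main obstacle is a genuine gap, not a routine verification. Vanishing of the leading coefficient $\nu_{\l^+,\mu^+}$ only tells you that a non-restricted $V^{\SH}$-monomial expands into \emph{Fock} monomials that are lower for the ordering; for $p<0$ the inclusion $L[p,r]\subsetneq\mathcal F_{p,r}$ is strict, so those Fock monomials need not individually lie in $L[p,r]$ and cannot be fed back into an induction over $V^{\SH}$-monomials. To close the induction you need actual relations in $L[p,r]$ of the form $L(-|p|)\vpr=(\text{combination of monomials with strictly smaller }(\mu^+,\l^+)\text{-part})$ and, for $p$ odd, the analogous relation for $G(-|p|/2)\vpr$; these do propagate through $U(\SH^-)$ because the brackets of Definition \ref{def} never increase $\deg_{\mu^+,\l^+}$. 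The first relation is supplied by Proposition \ref{sing_neg_p}, since every term of $\Phi(p,r)$ other than $L(p)$ carries an $\a$ or $\Psi$ factor. The second is \emph{not} supplied by what you cite: Theorem \ref{struktura}(3) asserts only the existence of a singular vector at conformal weight $|p|/2$, not that its $G(-|p|/2)$-coefficient is nonzero (the paper states that form only in an unproved remark). It can be recovered, e.g., by a dimension count at weight $|p|/2$ combining the linear independence from Lemma \ref{nejed} with Theorem \ref{struktura}(3), but as written your proof of the odd case is incomplete at its decisive step.
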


\begin{proof}
We prove the claim for $p>0$. The case $p<0$ follows again by using contragredient modules.

Assume that $p>0$ is odd. We have constructed families of singular and subsingular vectors in Theorem \ref{sing-p-odd-1} of types
\bea
u_{p,r}^{(n)}&=&\Psi(-\tfrac{p}{2})(\a(-p))^n\vpr + \cdots,\nn
w_{p,r}^{(n)}&=&(\a(-p))^n\vpr + \cdots.\nonumber
\eea
Now we can eliminate all basis vectors (\ref{PBW-2}) containing either $\Psi^-(-\tfrac{p}{2})$ or $\alpha(-p)$ as a factor from the spanning set of $L[p,r]$. Therefore, the set of vectors
\[( \Psi^{+}  _{-\l^+} \Psi ^-  _{- \l^-} d_{-\mu^+ } c_{-\mu^- } )  \vpr,\qquad \l^+_i\ne\tfrac{p}{2},\ \mu^+_i\neq p\]
spans $L[p,r]$ which gives the claimed inequalities.

If $p>0$ is even, we have a family of singular vectors from Theorem \ref{sing-p-odd-1} of type
\[
u_{p,r}^{(n)}=(\a(-p))^n\vpr + \cdots
\]
so we can eliminate vectors (\ref{PBW-2}) containing $\alpha(-p)$ as a factor. Again, we obtain the wanted character inequality.
\end{proof}

Using these two lemmas, we get:

\begin{theorem} \label{xx} Assume that $p \in {\Z}\setminus \{ 0\} $ and $r \in {\C}$. Then we have:
\begin{align}
\ch L[p,r] &= q^{h_{p,r}} (1 -q ^{\tfrac{\vert p\vert}{2}})   \prod_{k=1} ^{\infty} \frac{ (1 + q^{k-1/2}) ^2}{(1-q^k) ^2},\quad\text{if }p\text{ is odd;}\nn
\ch L[p,r] &= q^{h_{p,r}} (1 -q ^{\vert p\vert   })   \prod_{k=1} ^{\infty} \frac{ (1 + q^{k-1/2}) ^2}{(1-q^k) ^2},\quad\text{if }p\text{ is even.}\nonumber
\end{align}
\end{theorem}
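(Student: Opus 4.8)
The plan is to combine Lemma \ref{nejed} and Lemma \ref{nejed2}, which together pin down $\ch L[p,r]$ exactly. Concretely, for any $p\in\Z\setminus\{0\}$ and $r\in\C$, Lemma \ref{nejed} gives the inequality $\ge$ and Lemma \ref{nejed2} gives the inequality $\le$, with the same right-hand side in each parity case; since the $q$-characters are power series with nonnegative integer coefficients, the two bounds force equality coefficient by coefficient.

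Since both lemmas are already proved in the excerpt, almost all the work has been done; the theorem is essentially a bookkeeping step. The only thing I would take care to spell out is that the two lemmas are stated for all $p\in\Z\setminus\{0\}$ (each lemma handles $p<0$ directly and $p>0$ via contragredient modules, using $\ch L[p,r]=\ch L[-p,-r]$), so there is no residual case to check. Thus I would simply write: by Lemma \ref{nejed} and Lemma \ref{nejed2}, for $p$ odd
\[
q^{h_{p,r}}(1-q^{\vert p\vert/2})\prod_{k=1}^\infty\frac{(1+q^{k-1/2})^2}{(1-q^k)^2}\ \le\ \ch L[p,r]\ \le\ q^{h_{p,r}}(1-q^{\vert p\vert/2})\prod_{k=1}^\infty\frac{(1+q^{k-1/2})^2}{(1-q^k)^2},
\]
and analogously with $q^{\vert p\vert}$ in place of $q^{\vert p\vert/2}$ when $p$ is even; hence equality holds in each case.

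There is no real obstacle here — the substance of the argument lives in the free-field realisation (Proposition \ref{ired-pmanji}, which realises $L[p,r]$ inside $\mathcal F_{p,r}$ for $p<0$) and in the explicit (sub)singular vectors of Theorems \ref{sing-p-odd-1} and \ref{sing-p-even-1}, both of which feed into the two lemmas. If anything, the one point deserving a sentence of care is the direction of the inequalities: the lower bound comes from exhibiting a linearly independent family of vectors inside $L[p,r]$, while the upper bound comes from exhibiting a spanning family after quotienting out the submodules generated by the (sub)singular vectors; the matching of the two characters is exactly the statement that the spanning set and the independent set have the same generating function. Once that is observed, the proof is a one-line squeeze.

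\begin{proof}
This follows immediately by combining Lemma \ref{nejed} and Lemma \ref{nejed2}. Both lemmas are valid for all $p\in\Z\setminus\{0\}$ and $r\in\C$ (the case $p>0$ being reduced to $p<0$ via the identity $\ch L[p,r]=\ch L[-p,-r]$ for contragredient modules). For $p$ odd the two lemmas give
\[
q^{h_{p,r}}\bigl(1-q^{\vert p\vert/2}\bigr)\prod_{k=1}^{\infty}\frac{(1+q^{k-1/2})^2}{(1-q^k)^2}\ \le\ \ch L[p,r]\ \le\ q^{h_{p,r}}\bigl(1-q^{\vert p\vert/2}\bigr)\prod_{k=1}^{\infty}\frac{(1+q^{k-1/2})^2}{(1-q^k)^2},
\]
and for $p$ even the same holds with $q^{\vert p\vert}$ replacing $q^{\vert p\vert/2}$. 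Since a $q$-character is a power series with nonnegative coefficients, the upper and lower bounds being equal forces $\ch L[p,r]$ to equal their common value. This is exactly the asserted formula.
\end{proof}
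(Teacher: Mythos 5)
Your proposal is correct and matches the paper exactly: the paper deduces Theorem \ref{xx} directly from Lemma \ref{nejed} and Lemma \ref{nejed2} with no further argument ("Using these two lemmas, we get:"). Your additional remarks on where the real work lives and on the reduction of $p>0$ to $p<0$ (and vice versa) via contragredient modules accurately reflect the structure of the two lemmas' proofs.
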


\section{The structure of $V[p,r]$}\label{struk2}
From Theorem \ref{xx} it follows that the $q$-character of the maximal submodule in $V[p,r]$ equals the $q$-character of the Verma module $V[p,r\pm1]$ if $p$ is even, and $V[p,r\pm\tfrac{1}{2}]$ if $p$ is odd (where the sign is opposite the sign of $p$). In three out of these four cases, this submodule is actually isomorphic to the Verma module.

\begin{theorem} \label{max-p-even}
Assume that $p\in\Z\setminus\{0\}$ is even. The maximal submodule of $V(h_{p,r},(1+p)\cla)$ is isomorphic to $V(h_{p,r}+\vert p\vert,(1+p)\cla)$.
\end{theorem}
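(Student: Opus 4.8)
The plan is to identify the maximal submodule of $V[p,r]$ (for $p$ even, $p\neq 0$) explicitly as the submodule generated by the singular vector $u_{p,r}$ constructed in Theorem \ref{singeven}, and to show this submodule is a full Verma module by exhibiting it as a homomorphic image of $V[p,r\mp1]$ that is forced to be an isomorphism on characters. I will treat the case $p>0$ in detail and deduce $p<0$ by passing to contragredient modules (using $V[p,r]^\ast$-type arguments as in the proof of Lemma \ref{nejed}, or directly by the symmetry $\ch L[p,r]=\ch L[-p,-r]$ together with the structural statements already available for $p<0$ from Proposition \ref{sing_neg_p}).

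First I would recall from Corollary \ref{embed-verma-1} that for $p\in\N$ even the screening operator $\mathcal G^{tw}$ is injective and sends $v_{p,r-1}$ to the singular vector $u_{p,r}=\mathcal G^{tw}v_{p,r-1}$ of weight $h_{p,r}+p=h_{p,r-1}$; hence it induces an embedding $V[p,r-1]\hookrightarrow V[p,r]$ whose image is the submodule $N:=\langle u_{p,r}\rangle$. Then I would invoke Theorem \ref{xx}: since $\ch L[p,r]=q^{h_{p,r}}(1-q^{|p|})\prod_{k\geq1}\frac{(1+q^{k-1/2})^2}{(1-q^k)^2}$, the maximal submodule $\mathrm{Max}(V[p,r])$ has $q$-character equal to $\ch V[p,r]-\ch L[p,r]=q^{h_{p,r}+|p|}\prod_{k\geq1}\frac{(1+q^{k-1/2})^2}{(1-q^k)^2}=\ch V[p,r-1]$ (using $h_{p,r-1}=h_{p,r}+p$). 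Since $N\subseteq\mathrm{Max}(V[p,r])$ — any proper submodule lies in the maximal one, and $N$ is proper because it is generated in positive conformal weight — and since $\ch N=\ch V[p,r-1]=\ch\mathrm{Max}(V[p,r])$, we get $N=\mathrm{Max}(V[p,r])$ by comparing graded dimensions. Composing with the isomorphism $V[p,r-1]\xrightarrow{\ \mathcal G^{tw}\ }N$ gives $\mathrm{Max}(V[p,r])\cong V[p,r-1]=V(h_{p,r}+|p|,(1+p)\cla)$, which is the claim for $p>0$.

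For $p<0$ even, the cleanest route is to dualise: $V[p,r]^\ast$ has a simple socle $L[p,r]^\ast\cong L[-p,-r]$, and the maximal submodule of $V[p,r]$ is the annihilator of the socle of the dual, so its character is again $\ch V[p,r]-\ch L[p,r]=\ch V[p,r+1]$ (now the sign is $+1$ since $p<0$, consistent with "the sign is opposite the sign of $p$"). The required embedding $V[p,r+1]\hookrightarrow V[p,r]$ can be produced from the singular vector $u_{p,r}=\Phi(p,r)v_{p,r}$ of Proposition \ref{sing_neg_p}, which has weight $h_{p,r}-p=h_{p,r+1}$; one must check that the induced map $V[p,r+1]\to V[p,r]$ is injective. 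This injectivity is where I expect the real work to lie: for a map out of a Verma module it suffices to check there is no singular vector in the kernel, equivalently that the submodule generated by $u_{p,r}$ has the full character $\ch V[p,r+1]$ and hence cannot be a proper quotient — and that in turn follows once we know (from Theorem \ref{xx} applied at $[p,r+1]$, plus the character bookkeeping above) that $\mathrm{Max}(V[p,r])$ and $V[p,r+1]$ have equal characters and $\langle u_{p,r}\rangle$ surjects onto the former. The only genuinely delicate point, then, is ruling out that $\langle u_{p,r}\rangle$ is a proper quotient of $V[p,r+1]$; since both have the same character, the surjection $V[p,r+1]\twoheadrightarrow\langle u_{p,r}\rangle$ is forced to be an isomorphism, so in fact the character computation from Theorem \ref{xx} does all the heavy lifting and no separate hard estimate is needed. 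I would therefore organise the proof so that Theorem \ref{xx} is the single nontrivial input, with everything else being the explicit singular vectors from Theorems \ref{singeven} and \ref{sing_neg_p} and elementary character arithmetic using $h_{p,r\mp1}=h_{p,r}\pm|p|$.
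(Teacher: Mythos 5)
Your argument for $p>0$ even is exactly the paper's: the injective screening operator $\mathcal G^{tw}$ identifies $\langle u_{p,r}\rangle$ with $V[p,r-1]$ (Corollary \ref{embed-verma-1}), and Theorem \ref{xx} then forces $\langle u_{p,r}\rangle$ to be the whole maximal submodule by comparing characters. That half is fine.

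The $p<0$ half has a genuine circularity. You need two facts: (i) the map $V[p,r+1]\to V[p,r]$, $v_{p,r+1}\mapsto u_{p,r}=\Phi(p,r)\vpr$, is injective, and (ii) $\langle u_{p,r}\rangle$ is all of the maximal submodule. You derive (i) from the claim that $\langle u_{p,r}\rangle$ has full character $\ch V[p,r+1]$, and you justify that claim by asserting that $\langle u_{p,r}\rangle$ ``surjects onto'' the maximal submodule --- i.e.\ by assuming (ii). But (ii) is not automatic: a priori the maximal submodule could strictly contain $\langle u_{p,r}\rangle$ (indeed, for $p$ odd the paper shows the maximal submodule is generated by a \emph{subsingular} vector, so ``maximal submodule $=$ submodule generated by the singular vector'' is precisely the kind of statement that can fail here). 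Each of (i) and (ii) implies the other via the character identity $\ch\mathrm{Max}(V[p,r])=\ch V[p,r+1]$, but you must prove one of them independently. Dualising does not rescue this: for $p<0$ the contragredient of $V[p,r]$ is the Fock space $\mathcal F_{-p,-r}$, not a Verma module, so the $p>0$ structural result does not transfer. The paper supplies the missing step by a direct leading-term computation: writing $u_{p,r}=L(p)\vpr+\cdots$ in a PBW basis ordered by (\ref{ordering-0}), it shows that for every $0\ne X\in U(\SH^-)$ the monomial $X L(p)\vpr$ survives as the leading term of $Xu_{p,r}$, hence $Xu_{p,r}\ne 0$; thus $U(\SH^-)$ acts freely on $u_{p,r}$ and $\langle u_{p,r}\rangle\cong V[p,r+1]$, after which the character comparison closes the argument. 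This explicit ``$\Phi(p,r)$ is not a zero divisor'' computation is the real content for $p<0$, and it is the step your proposal identifies as delicate but then incorrectly dismisses as following from character bookkeeping alone.
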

\begin{proof}
From Theorem \ref{xx}, the maximality of the submodule $\<u_{p,r}\>$  in Corollary \ref{embed-verma-1} follows, which proves the claim for $p>0$.\\
Assume now that $p<0$. Let us f{}ix the PBW basis 
\bea\label{PBW-3}
(\Psi_{-\l^-} G_{-\l^+} \a_{-\mu^-} L_{-\mu^+} )\vpr
\eea
of Verma modules $V[p,r]$ and ordering (\ref{ordering-0}) with respect to $(\mu^+,\l^+)$. In Proposition \ref{sing_neg_p} we constructed a singular vector of the type 
\[
u_{p,r}= \Phi(p,r) \vpr = L(p)\vpr+\cdots
\]
where $\cdots$ denotes a sum $\sum w_i$ of basis elements (\ref{PBW-3}) of weight $-p$  such that $w_i<L(p)\vpr$. In particular, for each such monomial $w_i$ we have $\deg_{\mu^-,\l^-}w_i>0$. 
We show that $u_{p,r}$ is not annihilated  by any element of   $U(\SH^-)$, i.e.,  $U(\SH) u_{p,r} $ is isomorphic  to  a Verma module.
Note f{}irst that 
$$(\Psi_{-\l^-} G_{-\l^+} \a_{-\mu^-} L_{-\mu^+} ) L(p)\vpr = (\Psi_{-\l^-} G_{-\l^+} \a_{-\mu^-} L_{-\bar\mu^+} )\vpr + \cdots$$
where $\bar\mu^+=(\mu^+_1,\ldots,-p,\ldots\mu^+_l)$, and $\cdots$ denotes a sum of basis elements of equal $\deg_{\l^+,\mu^+}$ and lower $\ell_{\l^+,\mu^+}$. Let
$$0\ne X=\sum K_{\l^\pm\mu^\pm} (\Psi_{-\l^-} G_{-\l^+} \a_{-\mu^-} L_{-\mu^+} )\in\mathcal U(\SH^-)$$
be an arbitrary element of an universal enveloping algebra of $\SH^-$. Recall brackets from Def{}inition \ref{def}. We use the fact that if $w_i$ is a basis element as in (\ref{PBW-3}) such that $\deg_{\l^-,\mu^-}w_i>0$, then either $Xw_i=0$ or $Xw_i$ is a linear combination of basis (\ref{PBW-3}) monomials $w'_i$ such that $\deg_{\l^-,\mu^-}w'_i>0$. Then we have
$$Xu_{p,r} = XL(p)v_{p,r}+\cdots = \sum K_{\l^\pm\mu^\pm} (\Psi_{-\l^-} G_{-\l^+} \a_{-\mu^-} L_{-\bar\mu^+} )\vpr+\cdots$$
where $\cdots$ again denotes a sum $\sum w_i$ of monomials (\ref{PBW-3}) such that $w_i<(\Psi_{-\l^-} G_{-\l^+} \a_{-\mu^-} L_{-\bar\mu^+} )\vpr$. Therefore, $Xu_{p,r}\ne0$ and we conclude that $\SH^-$ acts freely on $u_{p,r}$, hence $\<u_{p,r}\>\cong V[p,r+1]$.
\end{proof}

\begin{remark}
In the proof of the previous Theorem, we actually show that $\Phi(p,r)$ is not a zero divisor. In order to prove this, we use an explicit expression for $\Phi(p,r)$. But we hope that this is a part of a more general theory (cf.\ \cite{G2020}). 
\end{remark}
From the previous theorem we conclude that Theorem \ref{sing-p-even-1} lists all singular (and subsingular) vectors in $V[p,r]$, $p\in\N$ even, and we get a chain of submodules
$$V[p,r-i-1]\subseteq V[p,r-i],\qquad i\in\Zp.$$
Similarly, for $p\in\Z_{<0}$ even we have a chain
$$V[p,r+i+1]\subseteq V[p,r+i],\qquad i\in\Zp.$$

\begin{theorem}  \label{max-p-odd-manji}
The maximal submodule in $V[p,r]$, $p>0$ odd is generated by a subsingular vector $w_{p,r}$ given by (\ref{subsing}).
\end{theorem}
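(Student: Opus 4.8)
The plan is to combine the character computation of Theorem \ref{xx} with the explicit subsingular vector $w_{p,r}$ from Proposition \ref{screening2} (formula (\ref{subsing})) and the family from Theorem \ref{sing-p-odd-1}. By Theorem \ref{xx}, the maximal submodule $M$ of $V[p,r]$ has $q$-character equal to $\ch V[p,r] - \ch L[p,r]$, which (using $h_{p,r}+p/2 = h_{p,r-1/2}$ and $h_{p,r}+p=h_{p,r-1}$) equals
\[
q^{h_{p,r}} q^{p/2}\,(1-q^{p/2})\prod_{k=1}^\infty\frac{(1+q^{k-1/2})^2}{(1-q^k)^2} = \ch V[p,r-\tfrac12].
\]
So $M$ is generated in conformal weights $\ge h_{p,r}+p/2$ and, as a first guess, should ``look like'' $V[p,r-\tfrac12]$ character-theoretically. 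But the lowest-weight space of $M$ (weight $h_{p,r}+p/2$) is spanned by the singular vector $u_{p,r} = u_{p,r}^{(0)} = Q v_{p,r-1/2}$ of Theorem \ref{podd}/\ref{sing-p-odd-1}, and $\langle u_{p,r}\rangle$ is \emph{not} all of $M$: by Theorem \ref{xx} again, $\langle u_{p,r}\rangle$ has the character of $L[p,r-\tfrac12]$ (it is the irreducible quotient's preimage)$,$ which is strictly smaller than $\ch V[p,r-\tfrac12]$. Hence $M$ is not generated by $u_{p,r}$, and we must locate a vector outside $\langle u_{p,r}\rangle$.

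The candidate is $w_{p,r}$ from (\ref{subsing}), equivalently $w_{p,r}^{(1)} = \mathcal G v_{p,r-1}$ in the notation of Theorem \ref{sing-p-odd-1}; it sits in weight $h_{p,r}+p = h_{p,r-1}$ and is subsingular (singular modulo $\langle u_{p,r}\rangle$, since $G(\tfrac p2)w_{p,r}^{(1)} = u_{p,r}^{(0)}$ by (\ref{gen}) and all other positive modes kill it modulo $\overline{\mathcal M_{p,r}}$, as shown in the proof of Proposition \ref{screening2}). The key steps are then: (i) show $\langle w_{p,r}\rangle \ni u_{p,r}$, so that $\langle w_{p,r}\rangle \supseteq \langle u_{p,r}\rangle$; this is exactly (\ref{gen}). (ii) Compute $\ch \langle w_{p,r}\rangle$. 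Since $w_{p,r}$ has leading term $S_p(-\a/\cla)\vpr = (-1)^p\cla^{-p}\a(-p)^p\vpr + \cdots$ with the lower terms involving fermions, applying $U(\SH^-)$ to $w_{p,r}$ and reducing against the PBW basis (\ref{PBW-2})--(\ref{PBW-3}) of $\mathcal F_{p,r}$ — ordered by the pair $(\mu^+,\l^+)$ as in (\ref{ordering-0}) — one checks that the map $U(\SH^-)\to \langle w_{p,r}\rangle$ has image spanning a subspace whose character already saturates $\ch V[p,r-\tfrac12]$, hence equals $\ch M$. Then $\langle w_{p,r}\rangle = M$ by the character count.

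The remaining, and main, obstacle is step (ii): $\langle w_{p,r}\rangle$ is \emph{not} a Verma module (for $p>0$ odd there is a non-injective map $V[p,r'] \to V[p,r]$, as announced in the introduction), so I cannot simply invoke freeness of the $\SH^-$-action as in the proof of Theorem \ref{max-p-even}. Instead I would argue as follows: the vectors $X^{\a,\Psi}_{-\mu}\, w_{p,r}$ for $\mu \in \mathcal P(\tfrac12)$ running over partitions \emph{not} containing the part $p$ (and a parallel fermionic restriction coming from the $\Psi(-\tfrac p2)$ appearing in $u_{p,r}$), project onto a linearly independent set in $\mathcal F_{p,r}$ — this is the analogue of the linear-independence computation in Lemma \ref{nejed}, using that the leading term of $w_{p,r}$ is $\a(-p)^p\vpr$ plus lower terms, and that in the free-field realisation $\a(-n)$, $\Psi(-n-\tfrac12)$, $L(-n)$, $G(-n-\tfrac12)$ act by operators with explicitly computable leading behaviour on $\vpr$. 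Counting $q$-degrees, this linearly independent family has character exactly $\ch V[p,r-\tfrac12] = \ch M$, so $\langle w_{p,r}\rangle = M$. One must also verify $w_{p,r}$ is a \emph{sub}singular and not singular vector (otherwise ``generated by a subsingular vector'' would be vacuous or misleading): this is again (\ref{gen}), since $G(\tfrac p2)w_{p,r} = u_{p,r}^{(0)}\ne 0$ while $u_{p,r}^{(0)}$ is genuinely singular and nonzero in $V[p,r]=\mathcal F_{p,r}$ by Theorem \ref{podd}. I expect the linear-independence bookkeeping — keeping track of which monomials in (\ref{PBW-2}) survive after quotienting by $\langle u_{p,r}\rangle$ and matching the resulting character to $\ch V[p,r-\tfrac12]$ — to be the delicate part, essentially a refinement of the spanning argument already used in Lemma \ref{nejed2}.
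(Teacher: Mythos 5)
Your top-level strategy (pin down $\ch\<w_{p,r}\>$ against $\ch V[p,r]-\ch L[p,r]$ using Theorem \ref{xx} and the family of Theorem \ref{sing-p-odd-1}) is the right one, but the step you yourself flag as the main obstacle --- step (ii), a \emph{lower} bound on $\ch\<w_{p,r}\>$ via linear independence --- is a genuine gap, and the route you sketch for it would fail as stated. First, the family $X^{\a,\Psi}_{-\mu}w_{p,r}$ involves only the generators $\a(-n)$, $\Psi(-n-\tfrac12)$, so even if it were linearly independent its character would be at most $q^{h_{p,r}+p}\prod_k\tfrac{1+q^{k-1/2}}{1-q^k}$, far short of the target $\ch V[p,r-\tfrac12]=q^{h_{p,r}+p/2}\prod_k\tfrac{(1+q^{k-1/2})^2}{(1-q^k)^2}$; you would need all four families of negative modes. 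Second, $\<w_{p,r}\>$ contains $u^{(0)}_{p,r}=G(\tfrac p2)w_{p,r}$ in conformal weight $h_{p,r}+\tfrac p2$, strictly \emph{below} $\wt w_{p,r}=h_{p,r}+p$, so no count of the form $U(\SH^-)w_{p,r}$ can exhaust the submodule; and since, as you correctly observe, $\SH^-$ does not act freely on $w_{p,r}$ here, exhibiting an explicit linearly independent spanning family is exactly the kind of bookkeeping the paper is structured to avoid. (Your side remark that $\<u^{(0)}_{p,r}\>$ has the character of $L[p,r-\tfrac12]$ is also asserted without proof, though it is only motivational.)

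The fix is to run the character comparison in the opposite direction: bound the \emph{quotient} from above rather than the submodule from below. All the (sub)singular vectors $u^{(n)}_{p,r}$, $w^{(n)}_{p,r}$ of Theorem \ref{sing-p-odd-1} lie in $\<w^{(1)}_{p,r}\>=\<w_{p,r}\>$ (this is precisely the remark following (\ref{gen})), and the elimination/spanning argument in the proof of Lemma \ref{nejed2} uses only these vectors; hence that argument shows directly that $\ch V[p,r]/\<w_{p,r}\>\le q^{h_{p,r}}(1-q^{p/2})\prod_k\tfrac{(1+q^{k-1/2})^2}{(1-q^k)^2}=\ch L[p,r]$. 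Since $w_{p,r}$ is subsingular with respect to a proper submodule, $\<w_{p,r}\>$ is proper, so the quotient surjects onto $L[p,r]$ and $\ch V[p,r]/\<w_{p,r}\>\ge\ch L[p,r]$. The two inequalities force $V[p,r]/\<w_{p,r}\>=L[p,r]$, i.e.\ $\<w_{p,r}\>$ is the maximal submodule; no linear independence inside the submodule is needed. This is the paper's (two-line) proof, and your write-up should be reorganised around it.
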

\begin{proof}
Let $M=\<w_{p,r}\>$ and consider $V[p,r]/M$. In the proof of Lemma \ref{nejed2} we have shown that $$\ch V[p,r]/M \leq q^{h_{p,r}} (1 -q ^{\tfrac{\vert p\vert}{2}})   \prod_{k=1} ^{\infty} \frac{ (1 + q^{k-1/2}) ^2}{(1-q^k) ^2}.$$
But, by Theorem \ref{xx}, the right hand side is equal to $\ch L[p,r]$. Since $\ch V[p,r]/M\geq\ch L[p,r]$, we conclude that $V[p,r]/M=L[p,r]$ i.e.\ $M$ is the maximal submodule.
\end{proof}

\begin{remark}
It can be shown that the maximal submodule in $V[p,r]$, $p<0$ odd is isomorphic to $V[p,r+\tfrac{1}{2}]$ and generated by a singular vector of the type $u_{p,r}=G(\tfrac{p}{2})\vpr+\cdots$.
\end{remark}
Structure of Verma modules in case $p\in\Z_{<0}$ is shown in Figure \ref{fig_verma_neg}. Free f{}ield realisation of Verma modules for $p\in\N$ odd is shown in Figure \ref{verma_fig1}.

\appendix
\label{dodatak}
\section{Figures}
Below we present embedding diagrams for the Verma module $V[p,r]$ and its dual $\mathcal F_{-p,-r}$, $p\in\N$. We should mention that in the present paper we don't present all proofs for the structure of $V[p,r]$ for $p\in\Z_{>0}$ odd, since it requires a very subtle analysis of indecomposable modules. In our forthcoming papers we shall investigate in more details the indecomposable modules and tensor categories related to these structures (cf.\ \cite{AR3}).

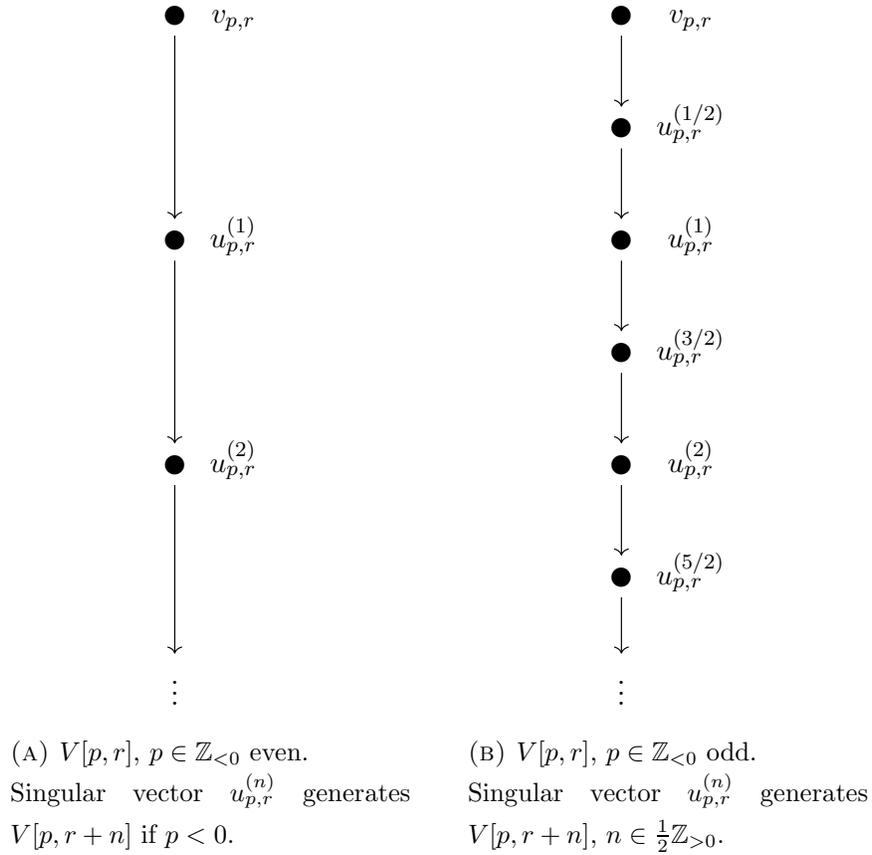
\begin{figure}[H]
    \centering
    \begin{subfigure}{.42\textwidth}
    \centering
    \begin{tikzcd}[column sep=-0.7em]
        \bcr\arrow[dd]&\ \ \vpr\\
        &\phantom{u^{1/2}_{p,r}}\\
        \bcr\arrow[dd]&\ \ u^{(1)}_{p,r}\\
        &\phantom{u^{3/2}_{p,r}}\\
        \bcr\arrow[dd]&\ \ u^{(2)}_{p,r}\\
        &\phantom{u^{5/2}_{p,r}}\\
        \vdots&
    \end{tikzcd}
    \caption{$V[p,r]$, $p\in\Z_{<0}$ even.\\ Singular vector $u^{(n)}_{p,r}$ generates $V[p,r+n]$ if $p<0$.}
    \end{subfigure}\qquad
    \begin{subfigure}{.42\textwidth}
    \centering
    \begin{tikzcd}[column sep=-0.7em]
        \bcr\arrow[d]&\ \ \vpr\\
        \bcr\arrow[d]&\ \ u^{(1/2)}_{p,r}\\
        \bcr\arrow[d]&\ \ u^{(1)}_{p,r}\\
        \bcr\arrow[d]&\ \ u^{(3/2)}_{p,r}\\
        \bcr\arrow[d]&\ \ u^{(2)}_{p,r}\\
        \bcr\arrow[d]&\ \ u^{(5/2)}_{p,r}\\
        \vdots&
    \end{tikzcd}
    \caption{$V[p,r]$, $p\in\Z_{<0}$ odd.\\ Singular vector $u^{(n)}_{p,r}$ generates $V[p,r+n]$, $n\in\tfrac{1}{2}\N$.}
    \end{subfigure}
    \caption{Structure of Verma modules for $p<0$.}
    \label{fig_verma_neg}
\end{figure}

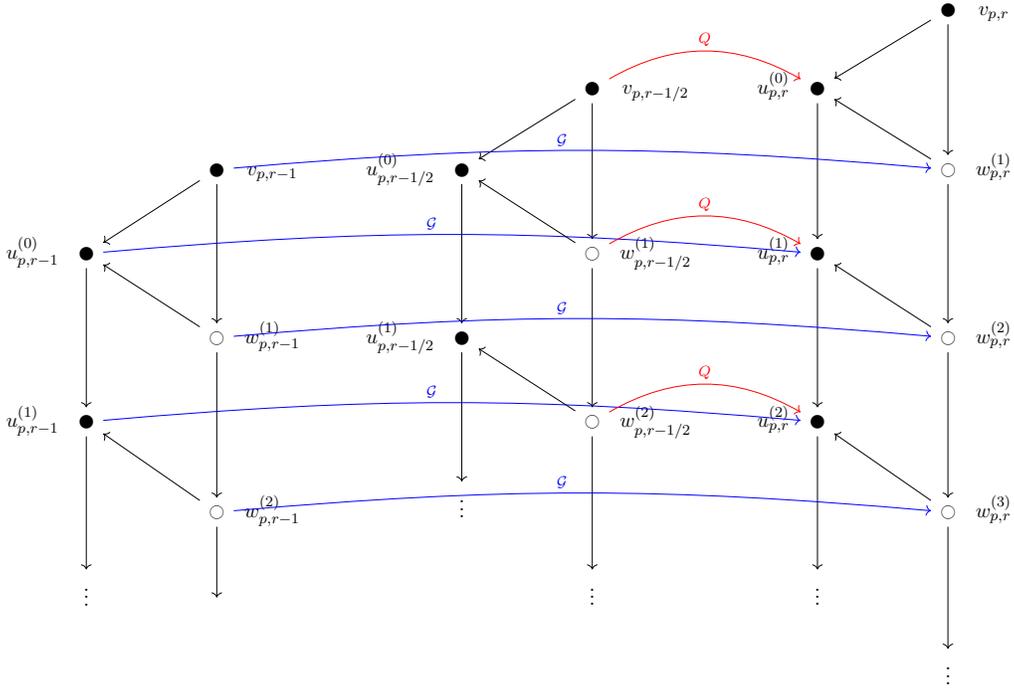
\begin{figure}[H]
    \centering
    \adjustbox{scale=.7,center}{\begin{tikzcd}
&&&&&    &&&&& &&&\bcr\arrow[dll]\arrow[dd]&\hspace{-25pt}v_{p,r}\\
&&&&&    &&&\bcr\arrow[dll]\arrow[dd]\arrow[rrr,red,bend left,"Q"]&\hspace{-25pt}v_{p,r-1/2} &u^{(0)}_{p,r}\hspace{-25pt}&\bcr\arrow[dd]&&&\\
&&&\bcr\arrow[dll]\arrow[dd]\arrow[rrrrrrrrrr,blue,bend left=5,"\mathcal G\qquad"]&\hspace{-25pt}v_{p,r-1}&u^{(0)}_{p,r-1/2}\hspace{-25pt}&\bcr\arrow[dd]&&&
&&&&\ec\arrow[llu]\arrow[dd]&\hspace{-25pt}w_{p,r}^{(1)}\\
u^{(0)}_{p,r-1}\hspace{-25pt}&\bcr\arrow[dd]\arrow[rrrrrrrrrr,blue,bend left=5,"\mathcal G\qquad"]&&&&    &&&\ec\arrow[llu]\arrow[dd]\arrow[rrr,red,bend left,"Q"]&\hspace{-25pt}w_{p,r-1/2}^{(1)}&u^{(1)}_{p,r}\hspace{-25pt}&\bcr\arrow[dd]&&&\\
&&&\ec\arrow[dd]\arrow[llu]\arrow[rrrrrrrrrr,blue,bend left=5,"\mathcal G\qquad"]&\hspace{-25pt}w^{(1)}_{p,r-1}&u^{(1)}_{p,r-1/2}\hspace{-25pt}&\bcr\arrow[dd]&&&
&&&&\ec\arrow[llu]\arrow[dd]&\hspace{-25pt}w_{p,r}^{(2)}\\
u^{(1)}_{p,r-1}\hspace{-25pt}&\bcr\arrow[dd]\arrow[rrrrrrrrrr,blue,bend left=5,"\mathcal G\qquad"]&&&&    &&&\ec\arrow[llu]\arrow[dd]\arrow[rrr,red,bend left,"Q"]&\hspace{-25pt}w_{p,r-1/2}^{(2)}&u^{(2)}_{p,r}\hspace{-25pt}&\bcr\arrow[dd]&&&\\
&&&\ec\arrow[llu]\arrow[d]\arrow[rrrrrrrrrr,blue,bend left=5,"\mathcal G\qquad"]&\hspace{-25pt}w^{(2)}_{p,r-1}&&\vdots&&&  &&&&\ec\arrow[llu]\arrow[dd]&\hspace{-25pt}w_{p,r}^{(3)}\\
&\vdots&&\ &&    &&&\vdots&&&\vdots\\
&&&&&&&&&&&&&\vdots
    \end{tikzcd}}
    \caption{Realisation of $\mathcal F_{p,r}\cong V[p,r]$, $p>0$ odd.}
    \label{verma_fig1}
\end{figure}

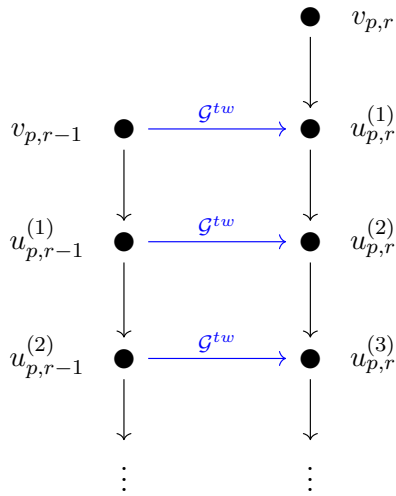
\begin{figure}[H]
\centering
    \begin{tikzcd}
        &&&\bcr\arrow[d]&\hspace{-25pt}\vpr\\
        v_{p,r-1}\hspace{-25pt}&\bcr\arrow[d]\arrow[rr,blue,"\mathcal G^{tw}"]&&\bcr\arrow[d]&\hspace{-25pt}u^{(1)}_{p,r}\\
        u^{(1)}_{p,r-1}\hspace{-25pt}&\bcr\arrow[d]\arrow[rr,blue,"\mathcal G^{tw}"]&&\bcr\arrow[d]&\hspace{-25pt}u^{(2)}_{p,r}\\
        u^{(2)}_{p,r-1}\hspace{-25pt}&\bcr\arrow[d]\arrow[rr,blue,"\mathcal G^{tw}"]&&\bcr\arrow[d]&\hspace{-25pt}u^{(3)}_{p,r}\\
        &\vdots&&\vdots&
    \end{tikzcd}
\caption{Realisation of $\mathcal F_{p,r}\cong V[p,r]$, $p>0$ even.}
\label{ff_paran_poz}
\end{figure}

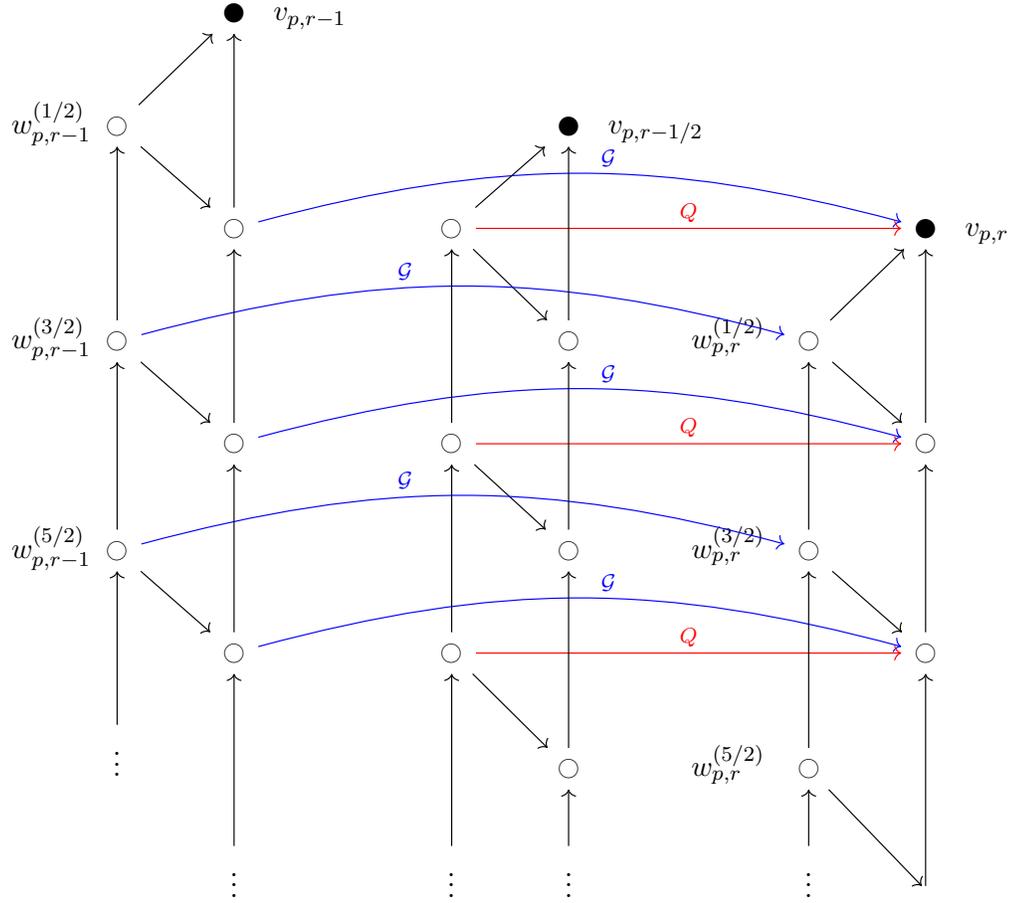
\begin{figure}[H]
   \centering
   \begin{tikzcd}
&&\bcr&\hspace{-25pt}v_{p,r-1}\\
w^{(1/2)}_{p,r-1}\hspace{-30pt}&\ec\arrow[ur]\arrow[dr]&&&&\bcr&\hspace{-25pt}v_{p,r-1/2}\\
&&\ec\arrow[uu]\arrow[rrrrrr,blue,bend left=15,"\qquad\mathcal G"]&&\ec\arrow[ur]\arrow[dr]\arrow[rrrr,red,"Q"]&&    &&\bcr&\hspace{-25pt}\vpr\\
w^{(3/2)}_{p,r-1}\hspace{-30pt}&\ec\arrow[uu]\arrow[dr]\arrow[rrrrrr,blue,bend left=15,"\mathcal G\qquad\qquad"]&& &&\ec\arrow[uu]&    w^{(1/2)}_{p,r}\hspace{-30pt}&\ec\arrow[ur]\arrow[dr]\\
&&\ec\arrow[uu]\arrow[rrrrrr,blue,bend left=15,"\qquad\mathcal G"]&&\ec\arrow[uu]\arrow[dr]\arrow[rrrr,red,"Q"]&&    &&\ec\arrow[uu]\\
w^{(5/2)}_{p,r-1}\hspace{-30pt}&\ec\arrow[uu]\arrow[dr]\arrow[rrrrrr,blue,bend left=15,"\mathcal G\qquad\qquad"]&& &&\ec\arrow[uu]&    w^{(3/2)}_{p,r}\hspace{-30pt}&\ec\arrow[uu]\arrow[dr]\\
&&\ec\arrow[uu]\arrow[rrrrrr,blue,bend left=15,"\qquad\mathcal G"]&& \ec\arrow[uu]\arrow[dr]\arrow[rrrr,red,"Q"]&&    &&\ec\arrow[uu]\\
&\vdots\arrow[uu]&& &&\ec\arrow[uu]&    w^{(5/2)}_{p,r}\hspace{-30pt}&\ec\arrow[uu]\arrow[dr]\\
&&\vdots\arrow[uu]& &\vdots\arrow[uu]&\vdots\arrow[u]&    &\vdots\arrow[u]&\ \arrow[uu]
    \end{tikzcd}
    \caption{$\mathcal F_{p,r}\cong V[-p,-r]^*$, $p<0$ odd.\\$\mathcal G w_{p,r-1}^{(n+1/2)}=w_{p,r}^{(n-1/2)}$ and $Q w_{p,r-1/2}^{(n+1/2)}$ are subsingular vectors.}
    \label{ff_neparan_neg}
\end{figure}

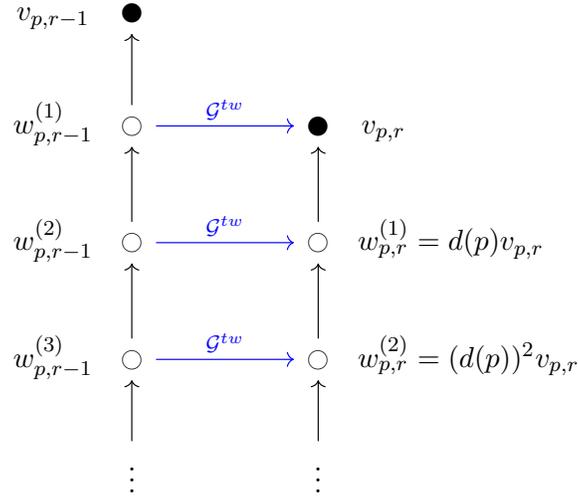
\begin{figure}[H]
    \centering
    \begin{tikzcd}
    v_{p,r-1}\hspace{-25pt}&\bcr\\
    w^{(1)}_{p,r-1}\hspace{-25pt}&\ec\arrow[u]\arrow[rr,blue,"\mathcal G^{tw}"]&&\bcr&\hspace{-25pt}\vpr\phantom{w^{(1)}_{p,r+1}=d(p)}\\
    w^{(2)}_{p,r-1}\hspace{-25pt}&\ec\arrow[u]\arrow[rr,blue,"\mathcal G^{tw}"]&&\ec\arrow[u]&\hspace{-25pt}w^{(1)}_{p,r}=d(p)\vpr\phantom{()^2}\\
    w^{(3)}_{p,r-1}\hspace{-25pt}&\ec\arrow[u]\arrow[rr,blue,"\mathcal G^{tw}"]&&\ec\arrow[u]&\hspace{-25pt}w^{(2)}_{p,r}=(d(p))^2v_{p,r}\\
    &\vdots\arrow[u]&&\vdots\arrow[u]
    \end{tikzcd}
    \caption{$\mathcal F_{p,r}\cong V[-p,-r]^*$, $p<0$ even.\\ Subsingular vector $w^{(n-1)}$ generates $\Ker_{\mathcal F_{p,r}}(\mathcal G^{tw})^n$, $n\in\N$.
    }
    \label{ff_paran_neg}
\end{figure}

\end{document}